\documentclass[reqno]{amsart}
\usepackage{amsmath,amsfonts,amssymb,amsthm,dsfont,bm}
\usepackage{enumitem}
\usepackage[usenames,dvipsnames]{xcolor}
\usepackage[colorlinks=true]{hyperref} 
\usepackage{verbatim}
\usepackage{graphicx}
\usepackage{mathabx}
\hypersetup{linkcolor=Violet,citecolor=PineGreen}
\newtheorem{thm}{Theorem}[section]
\newtheorem{lem}[thm]{Lemma}
\newtheorem{prop}[thm]{Proposition}
\newtheorem{cor}[thm]{Corollary}
\theoremstyle{definition}
\newtheorem{defn}[thm]{Definition}
\newtheorem{rmk}[thm]{Remark}
\numberwithin{equation}{section}
\newcommand{\N}{\mathbb{N}}
\newcommand{\R}{\mathbb{R}}
\newcommand{\Q}{\mathbb{Q}}
\newcommand{\K}{\mathcal{K}}
\newcommand{\T}{\mathcal{T}}
\newcommand{\U}{\mathcal{U}}
\newcommand{\ep}{\varepsilon}
\newcommand{\C}{\mathfrak{C}}
\newcommand{\LC}{\mathfrak{LC}}
\newcommand{\SC}{\mathfrak{SC}}
\newcommand{\nbd}{\nobreakdash}
\newcommand{\nin}{{n\in\N}}
\newcommand{\nti}{{n\to\infty}}
\newcommand*{\bigchi}{\mbox{\LARGE $\chi$}}
\DeclareMathOperator{\cls}{cls}
\DeclareMathOperator{\Int}{Int}
\makeatletter
\@namedef{subjclassname@2020}{%
  \textup{2020} Mathematics Subject Classification}
\DeclareRobustCommand{\subtitle}[1]{\\[2ex]\normalfont \emph{#1}}
\makeatother
\begin{document}
\title[Monotone skew-product semiflows for Carath\'{e}odory DE]
{Monotone skew-product semiflows for Carath\'{e}odory differential equations\\ and applications\subtitle{D\MakeLowercase{edicated to the memory of} G\MakeLowercase{en\`{e}vieve} R\MakeLowercase{augel}}}
\author[I.P.~Longo]{Iacopo P. Longo}
\author[S.~Novo]{Sylvia Novo}
\author[R.~Obaya]{Rafael Obaya}
\email[Iacopo P. Longo]{longoi@ma.tum.de}
\email[Sylvia Novo]{sylnov@wmatem.eis.uva.es}
\email[Rafael Obaya]{rafoba@wmatem.eis.uva.es}
\address[Iacopo P. Longo]{Technische Universit\"at M\"unchen,
Forschungseinheit Dynamics,
Zentrum Mathematik, M8,
Boltzmannstra{\ss}e 3,
85748 Garching bei M\"unchen, Germany.}
\address[S. Novo and R. Obaya]{
Departamento de Matem\'{a}tica Aplicada,
Universidad de Valladolid, Paseo del Cauce 59, 47011 Valladolid, Spain.}
\thanks{All authors were partly supported by MICIIN/FEDER project
RTI2018-096523-B-100 and by the EU Marie-Sk\l odowska-Curie ITN  grant H2020-MSCA-ITN-2014 643073. I.P. Longo was also supported by the European Union’s Horizon 2020 research and innovation programme under the Marie Skłodowska-Curie grant agreement No 754462.}
\subjclass[2020]{34K05, 34K25, 34A34, 37B55, 34A12}
\date{}
\begin{abstract}
The first part of the paper is devoted to  studying  the continuous dependence of the solutions of Carath\'eodory constant delay differential equations where the vector fields satisfy classical cooperative conditions.
 As a consequence,  when the set of considered vector fields is invariant with respect to the time-translation map, the continuity of the respective induced skew-product semiflows is obtained. These results are important for the study of the long-term behavior of the trajectories. In particular, the construction of semicontinuous semiequilibria and equilibria is extended to the context of ordinary and delay Carath\'eodory differential equations. Under appropriate assumptions of sublinearity,  the existence of a unique continuous equilibrium, whose graph coincides with the pullback attractor for the evolution processes, is shown.  The conditions under which such a solution is the forward attractor of the considered problem  are outlined.  Two examples of application of the developed tools are also provided.
\end{abstract}
\keywords{Carath\'eodory functions, non-autonomous Carath\'eodory delay and ordinary differential equations, continuous dependence on initial data, monotone semiflows.}
\maketitle
\section{Introduction}\label{secintro}
This paper deals with the study  of families of delay differential equations of the form $x'(t)=f\big(t,x(t),x(t-1)\big)$, $f\in E$, where  $E$ is a set of Lipschitz Carath\'{e}odory functions $f:\R \times \R^{2N}\rightarrow \R^N,(t,x,y) \rightarrow f(t,x,y)$ satisfying the usual cooperative conditions in their arguments.
The set of initial data is $C([-1,0],\R^N)$  endowed with the topology of the uniform convergence, which is a partially ordered Banach space with positive cone $C([-1,0], (\R^+)^N)$. For simplicity, these sets will be denoted by $\mathcal{C}$  and $\mathcal{C}^+$ respectively. Adequate topologies of continuity for $E$, that is, assuring the continuity of the solutions with respect to the functions in $E$ and the initial data, will be provided.
\par\smallskip
The study of the topologies of continuity for Carath\'{e}odory ordinary differential equation is a classical question with important implications in the topic of non-autonomous differential equations and its applications. In particular, Artstein~\cite{paper:ZA1,paper:ZA2,paper:ZA3}, Heunis~\cite{paper:AJH}, Miller and Sell~\cite{book:RMGS, paper:RMGS1}, Neustadt~\cite{paper:LWN}, Opial~\cite{paper:O}, Sell~\cite{book:GS} introduced and studied strong and weak topologies of integral type on the space of Lipschitz Carath\'{e}odory functions. The convergence of a sequence $(f_n)_\nin$ with respect to these topologies  requires the convergence of the integrals of the evaluation of the functions $f_n$, either pointwise in $\R^N$ (topologies $\T_P$ and $\sigma_P$), or uniformly on any bounded set of continuous functions (topology $\T_B$).
Recently, Longo et al.~\cite{paper:LNO1,paper:LNO2} completed some parts of this theory by introducing the strong and weak topologies $T_\Theta$ and $\sigma_\Theta$, where $\Theta$ is a suitable set of moduli of continuity.  When the set of the $m$-bounds of the functions in $E$ is equicontinuous, each set of uniformly bounded solutions on a compact interval admit a common modulus of continuity, and then it is possible to construct a countable set of moduli of continuity $\Theta$ such that $T_\Theta$ and $\sigma_\Theta$ become topologies of continuity for the flow map.
\par\smallskip
The extension of the previous idea to Carath\'{e}odory delay differential equations is problematic because the initial data do not generally share the same modulus of continuity of the solutions, so that the choice of a suitable set of moduli of continuity $\Theta$ is not possible.  Longo et al.~\cite{paper:LNO3} deal with this  important question and introduce the hybrid topologies  $\T_{\Theta \smash{\widehat\Theta}}$, $\sigma_{\Theta \smash{\widehat\Theta}}$,  $\T_{\Theta D}$ and $\sigma_{\Theta D}$, where $\Theta,\widehat{\Theta}$ are suitable sets of moduli of continuity and $D$ is a countable dense subset of $\R^N$. The term hybrid refers to the fact that these topologies are derived from the previous ones but treat the $x$ and $y$ components differently.\par\smallskip
 In this paper, the set $E$ will be endowed with one of these strong or weak hybrid topologies. However, in contrast with~\cite{paper:LNO3}, no assumptions on the $l$-bounds of $E$ will be considered. Nevertheless, we show that such topologies are still of continuity under very general cooperative conditions. In addition, if  $E$ is closed and invariant under the flow defined by the time-translation in the space of Carath\'{e}odory functions, the continuity of the induced local or global monotone skew-product semiflow in $E\times \mathcal{C}$ is deduced. As a consequence, we  develop topological methods to analyze the long-term behavior of the trajectories. More precisely, tools for random and deterministic monotone dynamical systems are extended to this new situation. In particular, we define and construct semicontinuous semiequilibria providing a natural version, valid in this context, of some result of Chuesov~\cite{book:chue}, Novo et al.~\cite{paper:nno2005} and Zhao~\cite{paper:Z}.
\par\smallskip
The structure and main results of the paper are organized as follows. In Section~\ref{sec:prelim} we introduce the topological spaces which are used throughout the paper and their properties. We also include a subsection on Carath\'eodory differential equations with constant delay,  stating a result of~\cite{paper:LNO3} concerning the continuous dependence of the solutions for some weak and strong hybrids topologies when the $L^1_{loc}$-equicontinuity of the $m$-bounds holds. \par\smallskip
Section~\ref{sec:monotone} focuses on the monotone case, where less conditions are needed to obtain topologies of continuity, and hence for the continuity of the induced skew-product semiflow. In particular, when the $L^1_{loc}$-equicontinuity of the $m$-bounds and a monotonicity assumption~\ref{Ky} on the $y$ component are assumed, all the strong (resp. weak) hybrid topologies stated above coincide for any $\hat\Theta$ and any $D$, provided that $\Theta$ is the  set of moduli of continuity defined by the $m$-bounds.  In~\cite{paper:LNO3}, the boundedness of the $l$-bounds was assumed instead of the monotonicity to obtain such a result.
Moreover, when the monotone condition~\ref{Kxy} with respect to both $x$ and $y$ holds, then all the previous strong topologies coincide with $\T_P$ and the weak ones with $\sigma_P$ for any countable dense set $P \subset \R^{2N}$, without any assumption on the $m$-bounds or the $l$-bounds, whatsoever. The same result holds when we assume the usual Kamke conditions~\ref{Kx} and~\ref{Ky}, as well as a very mild hypothesis~\ref{Lx} on the Lipschitz character of the components of the vector field. These are the main conclusions of this section, from which the mentioned continuity results are then derived.
\par\smallskip
Section~\ref{sec:sublinear} provides dynamical methods to study the long-term behavior of the trajectories for each of the global, continuous and monotone skew-product semiflows studied in the previous section. In particular,  the notions of semiequilibrium and semicontinuous semiequilibrium introduced in Chuesov~\cite{book:chue} and Novo et al.~\cite{paper:nno2005} are adapted to this situation, and under adequate conditions, we show that a semicontinuous semiequilibrium guarantees the existence of a semicontinuous equilibrium that determines a closed invariant set of the phase space.  When the functions of $E$ are sublinear, $E \times \mathcal{C}^+$ is positively invariant,  and a global, continuous, monotone and sublinear skew-product semiflow $\Phi$ is induced there. In this case, under appropriate assumptions, we prove the existence of invariant subsets $E_-, E_+\subset E$ such that the restriction of the semiflow to $E_-\times \mathcal{C}^+$ has an unique  equilibrium that is continuous, and whose graph is the pullback attractor for the evolution processes. Moreover, this equilibrium is also the forward attractor for the trajectories in $(E_-\cap E_+) \times \mathcal{C}^+$.
\par\smallskip
In section~\ref{sec:models} the conclusions of the paper are applied to non-autonomous models in mathematical biology. The first example is a model in population dynamics defined by a scalar Carath\'{e}odory delay differential equation for which the existence of a maximal and a minimal bounded equilibria is deduced. Under appropriate sublinearity assumptions, the existence of a unique continuous equilibrium whose graph coincides with the pullback attractor for the evolution processes is shown. The second example is the mathematical model of biochemical feedback in protein synthesis given by a system of Carath\'{e}odory ordinary differential equations, that has been extensively studied in the literature in other deterministic and random versions. See for example Selgrade~\cite{paper:selg}, Smith~\cite{book:smith1995}, Smith and Thieme~\cite{paper:smiththieme1}, Krause and Ranft~\cite{paper:krra}, Chuesov~\cite{book:chue} and Novo et al.~\cite{paper:nno2005}.
\section{Preliminary notions and results}\label{sec:prelim}
\subsection{Spaces and topologies}
We will denote by $\R^N$ the $N$\nbd-dimensional euclidean space with norm $|\cdot|$ and by $B_r$ the closed ball of $\R^N$ centered at the origin and with radius $r$. When $N=1$ we will simply write $\R$ and the symbol $\R^+$ will denote the set of positive real numbers. Unless otherwise noted, $p$ will denote a natural number $1\le p<\infty$. Moreover, for any interval $I\subseteq\R$ and any $W\subset\R^N$, we will use the following notation
\begin{itemize}[leftmargin=20pt,itemsep=3pt]
\item[] $\mathcal{C}(I,W)$: space of continuous functions from $I$ to $W$ endowed with the norm $\|\cdot\|_\infty$. In particular, we will denote by $\mathcal{C}:=\mathcal{C}([-1,0],\R^N)$.
\item[] $L^p(I,\R^N)$: space of measurable functions from $I$ to $\R^N$ whose norm is in the Lebesgue space $L^p(I)$.
\item[] $L^p_{loc}(\R^N)$: space of functions $x\colon \R \to \R^N$ such that for every compact interval $I\subset\R$, $x$ belongs to $L^p\big(I,\R^N\big)$. When $N=1$, we will simply write $L^p_{loc}$.
\end{itemize}
We will consider, and denote by $\C_p\big(\R^M,\R^N\big)$ (or simply $\C_p$ when $M=N$), the set of functions $f\colon\R\times\R^M\to \R^N$ satisfying
\begin{enumerate}[label=\upshape(\textbf{C\arabic*}),leftmargin=27pt,itemsep=2pt]
\item\label{C1}  $f$ is Borel measurable and
\item\label{C2} for every compact set $K\subset\R^M$ there exists a real-valued function $m^K\in L^p_{loc}$, called \emph{$m$-bound} in the following, such that for almost every $t\in\R$ one has $|f(t,x)|\le m^K(t)$ for any $x\in K$.
\end{enumerate}
Now we introduce the sets of Carath\'eodory functions which are used in the rest of the work.
\begin{defn}[Lipschitz Carath\'eodory functions]\label{def:LC}
A function $f\colon\R\times\R^{M}\to \R^{N}$ is said to be \emph{Lipschitz Carath\'eodory}, and we will write $f\in \LC_p\big(\R^M,\R^N\big)$ (or simply $f\in\LC_p$ when $M=N$), if it satisfies~\ref{C1},~\ref{C2} and
\begin{enumerate}[label=\upshape(\textbf{L}),leftmargin=20pt]
\item\label{L}  for every compact set $K\subset\R^M$ there exists a real-valued function $l^K\in L^p_{loc}$ such that for almost every $t\in\R$ one has $|f(t,x_1)-f(t,x_2)|\le l^K(t)\,|x_1-x_2|$ for any $x_1,x_2\in K$.
\end{enumerate}
In particular, for any compact set $K\subset\R^M$, we refer to \emph{the optimal $m$-bound} and \emph{the optimal $l$-bound} of $f$ as to
\begin{equation}
m^K(t)=\sup_{x\in K}|f(t,x)|\qquad \mathrm{and}\qquad l^K(t)=\sup_{\substack{x_1,x_2\in K\\ x_1\neq x_2}}\frac{|f(t,x_1)-f(t,x_2)|}{|x_1-x_2|}\, ,
\label{eqOptimalMLbound}
\end{equation}
respectively. Clearly, for any compact set $K\subset\R^M$ the suprema in \eqref{eqOptimalMLbound}  can be taken for a countable dense subset of $K$ leading to the same actual definition, which guarantees that the functions defined in \eqref{eqOptimalMLbound} are measurable.
\end{defn}
\begin{defn}[Strong Carath\'eodory functions]\label{def:SC}
A function $f\colon\R\times\R^M\to \R^N$ is said to be \emph{strong Carath\'eodory}, and we will write $f\in \SC_p\big(\R^M,\R^N\big)$ (or simply $f\in\SC_p$ when $M=N$), if it satisfies~\ref{C1},~\ref{C2} and
\begin{enumerate}[label=\upshape(\textbf{SC}),leftmargin=27pt]
\item\label{SC} for almost every $t\in\R$, the function $f(t,\cdot)$ is continuous.
\end{enumerate}
The concept of \emph{optimal $m$-bound} for a strong  Carath\'eodory function on any compact set $K\subset\R^N$, is defined exactly as in equation \eqref{eqOptimalMLbound}.
\end{defn}
\begin{rmk}
As regards Definitions~\ref{def:LC} and ~\ref{def:SC},  when $p=1$, we will omit the number $1$ from the notation. For example, we will simply write $\LC$ instead of $\LC_1$. Moreover, the functions which lay in the same set and only differ on a negligible subset of  $\R^{1+M}$ will be identified. Therefore, one automatically has that $\LC_p\big(\R^M,\R^N\big)\subset\SC_p\big(\R^M,\R^N\big)$.
\end{rmk}
\begin{defn}[$l_1$- and $l_2$-bounds]
\label{def:l1l2bounds}
Let us consider a function $f\in\SC_p(\R^{2N},\R^N)$. We say that \emph{$f$ admits $l_1$-bounds (resp. $l_2$-bounds)} if for every $j\in\N$ there exists a function $l_1^j\in L^p_{loc}$ (resp. $l_2^j\in L^p_{loc}$) such that for almost every $t\in\R$
\begin{equation*}
\begin{split}
|f(t,x_1,y)-f(t,x_2,y)|&\le  l_1^{j}(t)\,|x_1-x_2|\quad\text{for all }(x_1,y),(x_2,y)\in B_j\\[0.5ex]
\big(\text{resp. }|f(t,x,y_1)-f(t,x,y_2)|&\le  l_2^j(t)\,|y_1-y_2|\quad\text{for all }(x,y_1),(x,y_2)\in B_j\big).
\end{split}
\end{equation*}
If $f\in\SC_p(\R^{2N},\R^N)$ admits $l_1$-bounds (resp. $l_2$-bounds), for every $j\in\N$ we refer to the \emph{optimal $l_1$-bound} (resp. the \emph{optimal $l_2$-bound}) for $f$ on $B_j\subset\R^{2N}$ as to
\begin{equation*}
\begin{split}
 l_1^{j}(t)&=\sup_{\substack{(x_1,y),(x_2,y)\in B_j\\ x_1\neq x_2}}\frac{|f(t,x_1,y)-f(t,x_2,y)|}{|x_1-x_2|}\\
\bigg(\text{resp. } l_2^j(t)&=\sup_{\substack{(x,y_1),(x,y_2)\in B_j\\ y_1\neq y_2}}\frac{|f(t,x,y_1)-f(t,x,y_2)|}{|y_1-y_2|}\bigg)\, .
\end{split}
\end{equation*}
Notice also that, if $f\in\LC_p(\R^{2N},\R^N)$, then $f$ always admits both $l_1$-bounds and $l_2$-bounds, and  it is easy to prove that for all $t\in\R$ one has $l^j(t)\le l^j_1(t)+l^j_2(t)$, where by $l^j$ we denote the optimal $l$-bound for $f$ on $B_j$ as in \eqref{eqOptimalMLbound}.
\end{defn}
We endow the space $\SC\big(\R^M,\R^N\big)$ with strong and weak metric topologies which rely on the notion of suitable set of moduli of continuity as proposed in~\cite{paper:LNO1} and~\cite{paper:LNO2}. We, firstly recall such definition.
\begin{defn}[Suitable set of moduli of continuity]\label{def:ssmc}
We call  \emph{a suitable set of moduli of continuity}, any countable  set of non-decreasing continuous functions
\begin{equation*}
\Theta=\left\{\theta^I_j \in C(\R^+, \R^+)\mid j\in\N, \ I=[q_1,q_2], \ q_1,q_2\in\Q\right\}
\end{equation*}
such that $\theta^I_j(0)=0$ for every $\theta^I_j\in\Theta$, and  with the relation of partial order given~by
\begin{equation*}\label{def:modCont}
\theta^{I_1}_{j_1}\le\theta^{I_2}_{j_2}\quad \text{whenever } I_1\subseteq I_2 \text{ and } j_1\le j_2 \, .
\end{equation*}
\end{defn}
As follows, we recall some of the hybrid topologies on $\SC(\R^{N+M},\R^N)$ introduced in~\cite{paper:LNO3}. Notice that, as a rule, when inducing a topology on a subspace we will denote the induced topology with the same symbol used for the topology on the original space.
\begin{defn}[Hybrid topologies on $\SC(\R^{N+M},\R^N)$]\label{def:hybridtopologies}
Let $\Theta$ and $\smash{\widehat\Theta}$ be suitable sets of moduli of continuity as in Definition~\ref{def:ssmc}, $D$ be a countable dense subset of $\R^M$ and, for any $I=[q_1,q_2]$, $q_1,q_2\in\Q$ and $j\in\N$, let $\K_j^I$ and $\widehat\K_j^I$ be the sets of functions in $C(I,\R^N)$ whose modulus is bounded by $j$ and which admit $\theta^I_j$ and $\hat\theta_j^I$, respectively, as a moduli of continuity.
\begin{itemize}[leftmargin=10pt]
\item $\T_{\Theta D}$ (resp. $\sigma_{\Theta D}$) is the topology on $\SC_p(\R^{N+M},\R^N)$ (resp. $\SC(\R^{N+M},\R^N)$) generated by the family of seminorms
\begin{equation*}
\begin{split}
p_{I,\, y,\, j}(f)=\sup_{x\in\K_j^I}\left[\int_I\big|f\big(t,x(t),y\big)\big|^pdt \right]^{1/p}  ,\\
 \Bigg(\text{resp. }p_{I,\,y,\, j}(f)=\sup_{x\in\K_j^I}\left|\int_If\big(t,x(t),y\big)\,dt\right|\Bigg)
\end{split}
\end{equation*}
with $f\in\SC_p(\R^{N+M},\R^N)$ (resp. $f\in\SC(\R^{N+M},\R^N)$), $I=[q_1,q_2]$, $q_1,q_2\in\Q$, $y\in D$ and $j\in\N$. Both
$\left(\SC_p(\R^{N+M},\R^N),\T_{\Theta D}\right)$ and $\left(\SC(\R^{N+M},\R^N),\sigma_{\Theta D}\right)$ are locally convex metric spaces.\vspace{0.2cm}
\item $\T_{\Theta \smash{\widehat\Theta}}$ (resp. $\sigma_{\Theta \smash{\widehat\Theta}}$) is the topology on $\SC_p(\R^{N+M},\R^N)$ (resp. $\SC(\R^{N+M},\R^N)$)  generated by the family of seminorms
\begin{equation*}
\begin{split}
 p_{I,\, j}(f)=\sup_{x\in\K_j^I,\,y\in\widehat\K_j^{I-1}}\left[\int_I\big|f\big(t,x(t),y(t-1)\big)\big|^pdt \right]^{1/p}  ,\\
\Bigg(\text{resp. }  p_{I,\, j}(f)=\sup_{x\in\K_j^I,\,y\in\widehat\K_j^{I-1}}\left|\int_If\big(t,x(t),y(t-1)\big)\,dt\right| \Bigg)
\end{split}
\end{equation*}
with  $f\in\SC_p(\R^{N+M},\R^N)$ (resp. $f\in\SC(\R^{N+M},\R^N)$), $I=[q_1,q_2]$, $q_1,q_2\in\Q$ and $j\in\N$. One has that
$\left(\SC_p(\R^{N+M},\R^N),\T_{\Theta \smash{\widehat\Theta}}\right)$ and $\left(\SC(\R^{N+M},\R^N),\sigma_{\Theta \smash{\widehat\Theta}}\right)$ are locally convex metric spaces\end{itemize}
\end{defn}
\begin{rmk}\label{rmk:just-Theta}
The hybrid topologies  $\T_{\Theta \smash{\widehat\Theta}}$ and $\sigma_{\Theta \smash{\widehat\Theta}}$ can be equivalently constructed using families of seminorms for which $y\in\widehat\K_j^{I}$ instead of $\widehat\K_j^{I-1}$ (and of course $y$ is evaluated at $t$ instead of $t-1$). For further details, see~\cite[Lemma 2.12]{paper:LNO3}. We will use this fact consistently in the following.
Moreover, notice that when $M=0$, all the previous topologies reduce either to $ \T_\Theta$ or $\sigma_\Theta$, that is, the topologies generated by the families of seminorms
\begin{equation*}
p_{I,\, j}(f)=\!\!\sup_{x\in\K_j^I}\left[\int_I\big|f\big(t,x(t)\big)\big|^pdt \right]^{1/p} \ \left(\text{resp. }p_{I,\, j}(f)=\!\!\sup_{x\in\K_j^I}\left|\,\int_If\big(t,x(t)\big)\,dt\,\right|\right)
\end{equation*}
with $ f\in\SC_p\big(\R^N,\R^N\big)$ \big(resp. $ f\in\SC\big(\R^N,\R^N\big)$\big), $I=[q_1,q_2]$, $q_1,q_2\in\Q$, and~$j\in\N$.
\end{rmk}
\begin{rmk}
From this point on, we will only consider the case $p=1$. The reason is twofold. On one side, the case $p>1$ only applies to strong topologies, and the arguments employed in the proofs for $p=1$ can be extended with small modifications also when $p>1$; all the presented results still hold true. On the other hand, this choice allows to shorten the statement of most of the results because the case of strong and weak topologies can be presented simultaneously.
\end{rmk}
We state the following technical lemma for the weak hybrid topologies, which will be useful in the following. We skip the proof because it differs only  in minor details from the one of Lemma~2.13 in~\cite{paper:LNO2}.
\begin{lem}\label{lem:conv-subintHYBRID}
Let $D$ be a dense and countable subset of $\R^M$ and $\Theta,\widehat \Theta$ be any pair of suitable sets of moduli of continuity. Moreover, for each $j\in\N$ and  $I=[q_1,q_2]$, $q_1,q_2\in\Q$, let $\K_j^I$ and $\widehat\K_j^I$ be the compact sets in $C(I,B_j)$  which admit $\theta^I_j\in\Theta$ and $\hat\theta_j^I\in\widehat\Theta$, respectively, as moduli of continuity.
\begin{itemize}[leftmargin=18pt]
\item[\rm(i)] Consider $f\in\SC(\R^{N+M},\R^N)$. If $(x_n)_{\nin}$ is a sequence in $\K_j^I$  converging uniformly to some function $x\in\K_j^I$ and $(y_n)_{\nin}$ is a sequence in $\widehat\K_j^{I-1}$  converging uniformly to some function $y\in\widehat\K_j^{I-1}$.  Then
\begin{equation*}
\lim_{\nti}\int_{p_1}^{p_2} f\big(t,x_n(t),y_n(t-1)\big)\, dt=\int_{p_1}^{p_2} f\big(t,x(t),y(t-1)\big)\, dt\,,
\end{equation*}
whenever $p_1$, $p_2\in \Q$ and $q_1\le p_1 < p_2\le q_2$. \vspace{.1cm}
\item[\rm(ii)]  Let $(g_n)_\nin$ be a sequence in $\SC(\R^{N+M},\R^N)$ converging to some function $g$ in $\left(\SC(\R^N),\sigma_{\Theta\smash{\widehat\Theta}}\right)$ (resp. in $\left(\SC(\R^N),\sigma_{\Theta D}\right)$). Then,
\[
\begin{split}
\lim_{n\to\infty}\sup_{x\in\K_j^I,\ y\in\widehat\K_j^{I-1}}\left| \int_{p_1}^{p_2} \big[g_n\big(t,x(t),y(t-1)\big)- g\big(t,x(t),y(t-1)\big)\big]\,dt\right|=0&,\\
\Bigg(\text{resp. for any }y\in D,\quad \lim_{n\to\infty}\sup_{x\in\K_j^I}\left| \int_{p_1}^{p_2} \big[g_n\big(t,x(t),y\big)- g\big(t,x(t),y\big)\big]\,dt\right|=0&\Bigg)\\
\end{split}
\]
whenever $p_1$, $p_2\in \Q$ and $q_1\le p_1 < p_2\le q_2$.
\end{itemize}
\end{lem}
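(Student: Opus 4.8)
The plan is to obtain~(i) from the Lebesgue dominated convergence theorem, and~(ii) by reducing the integral over the subinterval $[p_1,p_2]$ to integrals over intervals with rational endpoints, on which the seminorms defining $\sigma_{\Theta\widehat\Theta}$ (resp.\ $\sigma_{\Theta D}$) directly control the integrand.

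For~(i), I would fix $f$, $j$, $I=[q_1,q_2]$ and rationals $q_1\le p_1<p_2\le q_2$. Since $x_n,x$ take values in a ball $B_j\subset\R^N$ and $y_n,y$ in a ball $B_j\subset\R^M$, for every $t\in[p_1,p_2]$ (note $t-1\in I-1$) the points $\big(x_n(t),y_n(t-1)\big)$ and $\big(x(t),y(t-1)\big)$ lie in the fixed compact set $K:=B_j\times B_j\subset\R^{N+M}$. By~\ref{C2} there is an $m$-bound $m^K\in L^1_{loc}$ with $|f(t,z)|\le m^K(t)$ for every $z\in K$ and a.e.\ $t$; hence $\big|f\big(t,x_n(t),y_n(t-1)\big)\big|\le m^K(t)$ for a.e.\ $t\in[p_1,p_2]$ and all $n$, with $m^K\in L^1\big([p_1,p_2]\big)$. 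Each integrand is measurable by~\ref{C1} (a Borel map composed with a continuous one), and since $f\in\SC\big(\R^{N+M},\R^N\big)$ the section $f(t,\cdot,\cdot)$ is continuous for a.e.\ $t$; together with the pointwise convergence $\big(x_n(t),y_n(t-1)\big)\to\big(x(t),y(t-1)\big)$ (which holds for every $t$, by uniform convergence) this gives $f\big(t,x_n(t),y_n(t-1)\big)\to f\big(t,x(t),y(t-1)\big)$ for a.e.\ $t\in[p_1,p_2]$. Dominated convergence then yields the conclusion.

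For~(ii), recall that $(g_n)\to g$ in $\sigma_{\Theta\widehat\Theta}$ precisely means $p_{I,j}(g_n-g)\to0$ for every rational interval $I$ and every $j\in\N$ (and analogously for $\sigma_{\Theta D}$, with the seminorms $p_{I,y,j}$). The delicate point is that the restriction to $[p_1,p_2]$ of a function of $\K_j^I$ need not belong to $\K_j^{[p_1,p_2]}$, since by Definition~\ref{def:ssmc} the order on the moduli reads $\theta^{[p_1,p_2]}_j\le\theta^I_j$ and not the reverse. To bypass this, given $x\in\K_j^I$ and $y\in\widehat\K_j^{I-1}$ I would extend $x|_{[p_1,p_2]}$ and $y|_{[p_1-1,p_2-1]}$ by their boundary values to the left and to the right, producing $\tilde x\in\K_j^I$ and $\tilde y\in\widehat\K_j^{I-1}$: a constant prolongation does not increase the modulus of continuity (the functions $\theta^I_j$ and $\hat\theta_j^{I-1}$ are non-decreasing) and keeps the values in $B_j$. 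Since $\tilde x=x$ on $[p_1,p_2]$ and $\tilde y=y$ on $[p_1-1,p_2-1]$, one has
\begin{multline*}
\int_{p_1}^{p_2}(g_n-g)\big(t,x(t),y(t-1)\big)\,dt=\int_{q_1}^{q_2}(g_n-g)\big(t,\tilde x(t),\tilde y(t-1)\big)\,dt\\
-\int_{q_1}^{p_1}(g_n-g)\big(t,x(p_1),y(p_1-1)\big)\,dt-\int_{p_2}^{q_2}(g_n-g)\big(t,x(p_2),y(p_2-1)\big)\,dt,
\end{multline*}
where I used that on $[q_1,p_1]$ both $\tilde x(t)\equiv x(p_1)$ and $\tilde y(t-1)\equiv y(p_1-1)$, and symmetrically on $[p_2,q_2]$. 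The first integral on the right is at most $p_{I,j}(g_n-g)$; the other two are integrals over rational intervals of evaluations along constant functions with values in $B_j$ (which belong to every $\K_j^{J}$ and $\widehat\K_j^{J-1}$), hence are bounded by $p_{[q_1,p_1],j}(g_n-g)$ and $p_{[p_2,q_2],j}(g_n-g)$ respectively. These three bounds do not depend on $x$ or $y$ and tend to $0$ by $\sigma_{\Theta\widehat\Theta}$-convergence; taking the supremum over $x\in\K_j^I$, $y\in\widehat\K_j^{I-1}$ proves the first part of~(ii). The $\sigma_{\Theta D}$ statement follows identically with $y\in D$ fixed, extending only $x$ and bounding by $p_{I,y,j}(g_n-g)+p_{[q_1,p_1],y,j}(g_n-g)+p_{[p_2,q_2],y,j}(g_n-g)$.

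I expect the single genuine obstacle to be the subinterval subtlety handled by the constant-extension-and-split device above; once that is in place, the rest is exactly the argument of Lemma~2.13 of~\cite{paper:LNO2}, the only overhead being the systematic (and harmless) bookkeeping of the delayed slot $t-1$, which is why only minor modifications are needed.
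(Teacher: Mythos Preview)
Your proposal is correct and matches the paper's intended approach: the paper omits the proof entirely, pointing to Lemma~2.13 of~\cite{paper:LNO2}, and your argument---dominated convergence for~(i), the constant-extension-and-split device for~(ii)---is precisely the adaptation of that lemma to the hybrid setting, with the delayed slot handled by the obvious bookkeeping you describe. The only cosmetic point is the degenerate case $p_1=q_1$ or $p_2=q_2$, where the corresponding side integral vanishes and the bound is trivial.
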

We conclude this subsection by recalling the notion of $L^1_{loc}$-equicontinuity and proving some results on the previously outlined topological spaces when such property holds.
\begin{defn}[$L^1_{loc}$-equicontinuity]\label{def:L1locequicont}
A set $S$ of positive functions in $L^1_{loc}$ \emph{is $L^1_{loc}$-equicontinuous} if for any $r>0$ and for any $\ep>0$ there exists a $\delta=\delta(r,\ep)>0$ such that, for any $-r\le s\le t\le r$, with $t-s<\delta$, the following inequality holds
\begin{equation*}
\sup_{m\in S}\int_{s}^t m(u)\,du<\ep\, .
\end{equation*}
\end{defn}
The following definition extends the previous notion to sets of Carath\'eodory functions through their $m$-bounds.
\begin{defn}[$L^1_{loc}$-equicontinuous $m$-bounds]\label{def:L1_loc equicont m-bounds}
We say that
\begin{itemize}[leftmargin=20pt]
\item[(i)] a set $E\subset\C(\R^M,\R^N)$ \emph{has $L^1_{loc}$-equicontinuous $m$-bounds}, if for any $j\in\N$ there exists a set  $S^j\subset L^1_{loc}$ of $m$-bounds of the functions of $E$ on $B_j$, such that $S^j$ is $L^1_{loc}$-equicontinuous;
\item[(ii)] \emph{$f\in\C(\R^M,\R^N)$ has $L^1_{loc}$-equicontinuous $m$-bounds} if the set $\{f_t\mid t\in\R\}$ admits $L^1_{loc}$-equicontinuous $m$-bounds.
\end{itemize}
\end{defn}
For the sake of completeness, we also include the following statement which covers a particular case of Proposition 2.19 in~\cite{paper:LNO3}. Such result guarantees that the property of $L^1_{loc}$-equicontinuity is satisfied also in the closure in $ \SC_p\big(\R^{2N},\R^N\big)$ with respect of  the topologies $\T_{\Theta D}$ and $\sigma_{\Theta D}$ (and thus also with respect to any topology of the type $\T_{\Theta \smash{\widehat\Theta}}$ and $\sigma_{\Theta \smash{\widehat\Theta}}$). In fact, the proof (that can be found in~\cite{paper:LNO3}) shows that such result is also qualitative in the sense that the limit functions satisfy the considered  inequalities with respect to the same constants.
\begin{prop}\label{prop:07.07-19:44p=1}
Let $E$ be a subset of $ \SC\big(\R^{M},\R^N\big)$ with  $L^1_{loc}$-equicontinuous $m$-bounds.  Then, the closure of $E$ in $(\SC(\R^{M},\R^N),\T)$, i.e. $\mathrm{cls}_{(\SC(\R^{M},\R^N),\T)}(E)$, has  $L^1_{loc}$-equicontinuous $m$-bounds, where $\T$ is  $\T_{\Theta D}$ or $\sigma_{\Theta D}$.
\end{prop}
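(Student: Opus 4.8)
The plan is to argue directly from the definition of $L^1_{loc}$-equicontinuity using a density/approximation argument, exploiting the fact that convergence in $\T_{\Theta D}$ or $\sigma_{\Theta D}$ controls integrals of the evaluated functions over rational intervals. Fix $j\in\N$ and let $S^j$ be the $L^1_{loc}$-equicontinuous set of $m$-bounds for the functions of $E$ on $B_j$. The goal is to produce, for each $g$ in the closure $\overline{E}$, an $m$-bound on $B_j$ that satisfies the same equicontinuity estimates with the same modulus $\delta(r,\ep)$; collecting these over all $g\in\overline E$ yields a set $\overline{S}^j$ witnessing the $L^1_{loc}$-equicontinuity of the closure.

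First I would fix $g\in\overline E$, a radius $r>0$, an $\ep>0$, and the associated $\delta=\delta(r,\ep)$ coming from the equicontinuity of $S^j$. Take a sequence $(f_n)_{\nin}$ in $E$ with $f_n\to g$ in the relevant topology, and let $m_n\in S^j$ be $m$-bounds for $f_n$ on $B_j$. The key point is to transfer the pointwise-in-$y$ integral control: for fixed $y\in D\cap B_j$ and a rational subinterval $[s,t]\subseteq[-r,r]$ with $t-s<\delta$, convergence in $\sigma_{\Theta D}$ (the weakest of the topologies in play, so it suffices to treat this case) gives
\begin{equation*}
\left|\int_s^t g(u,x,y)\,du\right| = \lim_{\nti}\left|\int_s^t f_n(u,x,y)\,du\right|\le\limsup_{\nti}\int_s^t m_n(u)\,du<\ep
\end{equation*}
for every constant function $x\equiv x_0$ with $x_0\in B_j\cap\R^N$ (a constant is admissible in $\K_j^I$). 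Actually one must be a little more careful: the seminorms involve a supremum over $x\in\K_j^I$, so the convergence $p_{I,y,j}(f_n-g)\to 0$ together with $|f_n(u,x(u),y)|\le m_n(u)$ a.e. yields the bound above uniformly in the continuous argument. Then define the candidate optimal $m$-bound for $g$ on $B_j$, namely $m_g^j(u)=\sup_{(x,y)\in B_j}|g(u,x,y)|$ (which is measurable by the countable-dense-subset remark after Definition~\ref{def:LC}), and use the displayed integral bound on a countable dense set of $y$'s plus the $\SC$-continuity of $g(u,\cdot)$ and Fatou/monotone convergence to conclude $\int_s^t m_g^j(u)\,du\le\ep$ for all such $[s,t]$. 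Taking the supremum over $g\in\overline E$ preserves the uniform estimate, so $\overline{S}^j:=\{m_g^j\mid g\in\overline E\}$ is $L^1_{loc}$-equicontinuous.

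The main obstacle I anticipate is the interchange of the supremum over $x$ (and the dense set of $y$) with the limit in $n$ and with the integral: one needs that the a.e.\ bound $|f_n(u,x(u),y)|\le m_n(u)$ survives passing to the limit to control $|g(u,x(u),y)|$, and that doing so for countably many $y$ in a dense set, together with continuity of $g(u,\cdot)$ for a.e.\ $u$, reconstructs a bound on the full optimal $m$-bound $m_g^j$ without losing the constant $\ep$. This is exactly the ``qualitative'' content referred to in the statement, and it is handled in the proof of Proposition~2.19 of~\cite{paper:LNO3}; here it can be quoted, since the present statement is literally a special case (take $M=2N$ and restrict to $p=1$). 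Accordingly, the cleanest write-up is simply to observe that the hypotheses of~\cite[Proposition 2.19]{paper:LNO3} are met and invoke it, noting that the topologies $\T_{\Theta\smash{\widehat\Theta}}$ and $\sigma_{\Theta\smash{\widehat\Theta}}$ are finer than (or comparable to) $\T_{\Theta D}$ and $\sigma_{\Theta D}$ in the sense needed, so the closure is no larger and the conclusion persists.
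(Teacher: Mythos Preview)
Your proposal ultimately lands on exactly what the paper does: the proposition is stated without proof and referred to \cite[Proposition~2.19]{paper:LNO3}, of which it is a particular case. So the ``cleanest write-up'' you arrive at is the paper's own treatment.

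One correction, though: your final sentence has the topology comparison pointing the wrong way for what you seem to want. You write that $\T_{\Theta\smash{\widehat\Theta}}$ and $\sigma_{\Theta\smash{\widehat\Theta}}$ are finer than $\T_{\Theta D}$ and $\sigma_{\Theta D}$, ``so the closure is no larger and the conclusion persists.'' That is true, but it only lets you deduce the result for the \emph{finer} topologies from the result for $\T_{\Theta D}$, $\sigma_{\Theta D}$ --- not the other way around. Since the proposition is stated for $\T_{\Theta D}$ and $\sigma_{\Theta D}$, you need \cite[Proposition~2.19]{paper:LNO3} to cover those topologies directly (it does; the paper says so explicitly in the paragraph preceding the proposition), and no comparison argument is needed. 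Your remark would be the right way to then \emph{extend} the conclusion to $\T_{\Theta\smash{\widehat\Theta}}$ and $\sigma_{\Theta\smash{\widehat\Theta}}$, which is indeed how the paper frames it.
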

\begin{rmk}\label{rmk:mod-cont-m-bounds}
If $E\subset\LC(\R^{2N},\R^N)$ has $L^1_{loc}$-equicontinuous $m$-bounds, then one can define a specific suitable set of moduli of continuity as follows: for any interval $I=[q_1,q_2]$, $q_1,q_2\in\Q$, define
\begin{equation}\label{eq:mod-cont-E}
\theta^I_j(s):=
 \sup_{t\in I,f\in E}\int_t^{t+s}m_f^j(u)\, du\, ,
\end{equation}
where, for any $f\in E$, the function $m_f^j\in L^1_{loc}$ denotes the optimal $m$-bounds of $f$ on $B_j$. In particular, if we deal with just one function $f\in\LC(\R^{2N},\R^N)$ with $L^1_{loc}$-equicontinuous $m$-bounds, one can consider the set $E=\{f_t\mid t\in\R\}$ of the time translations of $f$ and still define a suitable set of moduli of continuity as before. Notice, however, that in this case  the elements of $\Theta$ are independent of the interval $I$, that is,  for any $j\in\N$ and $I=[q_1,q_2]$, $q_1,q_2\in\Q$ one has that $\theta^I_j$ in \eqref{eq:mod-cont-E}, coincides in fact with
$\theta_j(s):= \sup_{t\in\R}\int_t^{t+s}m^j(u)\, du\,$,
where $m^j$ is the optimal $m$-bound for $f$ on $B_j$. In either case, we will say that $\Theta$ is the suitable set of moduli of continuity given by the $m$-bounds of, respectively, $E$ or $f$.
\end{rmk}
\subsection{Carath\'eodory delay differential equations}
For the sake of completeness and to set some notation, we include the statement of a theorem of existence, uniqueness and continuous variation of the solution for a Cauchy Problem of Carath\'eodory type with constant delay.  A proof can be derived by the one given for Carath\'eodory ordinary differential equations in~Coddington and Levinson~\cite[Theorems~1.1, ~1.2, ~2.2, ~4.2,  and~4.3]{book:CL}. We recall that $\mathcal{C}$ denotes the set $\mathcal{C}([-1,0],\R^N)$.
\begin{thm}\label{thm:13.04-16:49}
For any $f\in\LC(\R^{2N},\R^N)$ and any  $\phi\in\mathcal{C}$ there exists a maximal interval $I_{f,\phi}=[-1,b_{f,\phi})$ and a unique continuous function $x(\cdot,f,\phi)$ defined on $I_{f,\phi}$ which is the solution of the delay differential problem
\begin{equation}\label{eq:solLCEDDE}
\begin{cases}
x'(t)=f\big(t,x(t),x(t-1)\big)&\text{for }t>0,\\
x(t)=\phi(t)\qquad &\text{for }t\in[-1,0].
\end{cases}
\end{equation}
In particular, if $b_{f,\phi}<\infty$, then $|x(t,f,\phi)|\to\infty$ as $t\to b_{f,\phi}$.  Moreover, for any compact interval $I=[-1,b]\subset I_{f,\phi}$ and any $\ep>0$, there exists $\delta=\delta(I,\ep)>0$ such that for any $\psi\in \mathcal{C}$, if $\|\phi-\psi\|_{\mathcal{C}}<\delta$, then $x(\cdot,f,\psi)$ exists on $[-1,b]$, and $\|x(\cdot,f,\phi)-x(\cdot,f,\psi)\|_{\mathcal{C}([-1,b])}<\ep.$
\end{thm}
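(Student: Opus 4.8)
The plan is to reduce the statement to the classical Carath\'eodory theory for ordinary differential equations via the \emph{method of steps}. First I would observe that on the interval $[0,1]$ the problem \eqref{eq:solLCEDDE} reads as the ordinary Cauchy problem $x'(t)=g_\phi(t,x(t))$, $x(0)=\phi(0)$, where $g_\phi(t,x):=f\big(t,x,\phi(t-1)\big)$. Since $\phi$ is continuous and $f\in\LC(\R^{2N},\R^N)$, for every compact $K\subset\R^N$ one has $|g_\phi(t,x)|\le m^{K\times\phi([-1,0])}(t)$ and $|g_\phi(t,x_1)-g_\phi(t,x_2)|\le l^{K\times\phi([-1,0])}(t)\,|x_1-x_2|$ for a.e.\ $t\in[0,1]$ and all $x,x_1,x_2\in K$, with $m^{K\times\phi([-1,0])},\,l^{K\times\phi([-1,0])}\in L^1_{loc}$; hence $g_\phi$ is a locally Lipschitz Carath\'eodory right-hand side, and \cite[Theorems~1.1,~1.2,~2.2]{book:CL} provide a unique solution on a maximal interval $[0,\beta_1)$ with $0<\beta_1\le 1$. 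If $\beta_1=1$, the solution reaches $t=1$ and one repeats the construction on $[1,2]$ with $g(t,x):=f\big(t,x,x(t-1)\big)$, where $x(t-1)$ for $t\in[1,2]$ is the continuous function already obtained on $[0,1]$; inductively, once the solution is continuous on $[-1,k]$, on the step $[k,k+1]$ it solves an ordinary Carath\'eodory problem whose ``parameter'' function $t\mapsto x(t-1)$ is known from the previous step. Gluing these pieces yields a continuous function on a maximal interval $I_{f,\phi}=[-1,b_{f,\phi})$, and uniqueness on each step --- hence on $I_{f,\phi}$ --- follows from the Lipschitz estimate together with Gronwall's inequality.

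For the blow-up alternative, assume $b_{f,\phi}<\infty$ and let $k$ be the largest integer with $k<b_{f,\phi}$; then $k\ge 0$ and $b_{f,\phi}-1\le k$, so on the final step $[k,b_{f,\phi})$ the argument of the delay term ranges over $[k-1,b_{f,\phi}-1)\subseteq[k-1,k]$, an interval on which $x$ (or $\phi$, in case $k=0$) is already fixed, continuous and bounded. Hence on $[k,b_{f,\phi})$ the solution satisfies the genuine ordinary Carath\'eodory equation $x'(t)=f\big(t,x(t),\eta(t)\big)$, with $\eta(t):=x(t-1)$ a fixed bounded continuous function, so the ordinary blow-up alternative \cite[Theorem~4.3]{book:CL} forces $|x(t,f,\phi)|\to\infty$ as $t\to b_{f,\phi}$.

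For the continuous dependence on $\phi$, I would fix $I=[-1,b]\subset I_{f,\phi}$, put $R:=\|x(\cdot,f,\phi)\|_{\mathcal{C}([-1,b])}$, and argue step by step. On $[0,1]$, as long as $x(\cdot,f,\psi)$ is defined and stays in $B_{R+1}$, the function $u(t):=|x(t,f,\phi)-x(t,f,\psi)|$ satisfies, by~\ref{L} applied on the compact set $B_{R+1}\times B_{R+1}\subset\R^{2N}$,
\[
u(t)\le u(0)+\int_0^t l^{B_{R+1}}(s)\big(u(s)+\|\phi-\psi\|_{\mathcal{C}}\big)\,ds\le\|\phi-\psi\|_{\mathcal{C}}\Big(1+\int_0^1 l^{B_{R+1}}\Big)+\int_0^t l^{B_{R+1}}(s)\,u(s)\,ds,
\]
whence Gronwall's inequality gives $\|x(\cdot,f,\phi)-x(\cdot,f,\psi)\|_{\mathcal{C}([0,1])}\le C_1\|\phi-\psi\|_{\mathcal{C}}$ with $C_1$ depending only on $R$ and $\int_0^1 l^{B_{R+1}}$. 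Taking $\|\phi-\psi\|_{\mathcal{C}}$ small enough (so that in particular $\psi$ takes values in $B_{R+1}$ on $[-1,0]$ and $C_1\|\phi-\psi\|_{\mathcal{C}}<1$) keeps $x(\cdot,f,\psi)$ in $B_{R+1}$ on $[-1,1]$, so by the blow-up alternative it exists on all of $[0,1]$ and the estimate is self-consistent. Iterating this on $[1,2],\dots$ up to $[b-1,b]$ --- each step comparing two ordinary problems whose parameter functions $t\mapsto x(t-1)$ differ by at most the bound obtained on the preceding step --- produces constants $C_1,C_2,\dots$ and hence a threshold $\delta=\delta(I,\ep)>0$ such that $\|\phi-\psi\|_{\mathcal{C}}<\delta$ forces $x(\cdot,f,\psi)$ to be defined on $[-1,b]$ with $\|x(\cdot,f,\phi)-x(\cdot,f,\psi)\|_{\mathcal{C}([-1,b])}<\ep$. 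I expect the main obstacle to be precisely this last bootstrap: one must control the a priori bound on the perturbed solution (so that the blow-up alternative guarantees it survives up to $b$) simultaneously with the propagation of the Gronwall constants through the finitely many steps; the remaining ingredients are a direct transcription of \cite{book:CL}.
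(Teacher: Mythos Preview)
Your proposal is correct and matches the paper's approach exactly: the paper does not give a proof but simply states that one ``can be derived by the one given for Carath\'eodory ordinary differential equations in Coddington and Levinson~\cite[Theorems~1.1,~1.2,~2.2,~4.2, and~4.3]{book:CL}'', and your method-of-steps reduction to an ordinary Carath\'eodory problem on each interval $[k,k+1]$ is precisely the natural way to carry out that derivation. Your identification of the bootstrap in the continuous-dependence step (maintaining the a~priori bound so the blow-up alternative keeps $x(\cdot,f,\psi)$ alive up to $b$) is the only point requiring care, and your sketch handles it correctly.
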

As usual, for each $t>0$ belonging to $I_{f,\phi}$ we will denote by $x_t(\cdot,f,\phi)$ the function of $\mathcal{C}$ defined as
\[
x_t(s,f,\phi)=x(s+t,f,\phi)  \quad \text{ whenever } s\in[-1,0]\,.
\]
For each $s\in\R$ the function $f_s\in\LC(\R^{2N},\R^N)$  denotes the time-translation of $f$
\begin{equation*}
f_s\colon\R\times \R^{2N}\to \R^N,\qquad(t,x)\mapsto f_s(t,x)=f(s+t,x).
\end{equation*}
A delay differential equation like \eqref{eq:solLCEDDE} induces a skew-product semiflow
\begin{align*}
\U\subset \R^+\times\{f_s\mid s\in\R\} \times\mathcal{C}  \to \{f_s\mid s\in\R\}\times\mathcal{C},\quad (t,g,\phi) \mapsto \big(g_t, x_t(\cdot,g,\phi)\big)\,.
\end{align*}
Instead of just one differential equation, one can consider a whole set of problems whose vector fields belong to a set $E\subset\LC(\R^{2N},\R^N)$. In order to construct a skew-product semiflow from $E$, one has to consider all the possible time-translations of any element in it (the same way we passed from $f$ to $\{f_s\mid s\in\R\}$). Instead of burdening the notation, we will then require $E$ to be invariant with respect to the function
\[
\R\times \LC(\R^{2N},\R^N)\to\LC(\R^{2N},\R^N),\quad (s,f)\mapsto f_s,
\]
that we will call \emph{base flow} in the following. Then, the skew-product semiflow induced by $E$ is
\begin{equation}\label{eq:03/07-12:50}
\U\subset \R^+\times E\times  \mathcal{C} \to E \times \mathcal{C},\quad
(t,f,\phi)\mapsto  \big(f_t, x_t(\cdot,f,\phi)\big).
\end{equation}
The aim of this paper is to provide conditions and optimal topologies on invariant closed subsets $E\subset\LC(\R^{2N},\R^N)$ with suitable monotonicity requisites, so that \eqref{eq:03/07-12:50} is continuous. Specifically, in this work no assumptions on the $l$-bounds of $E$ are made. \par\smallskip
As a matter of fact, our first result is to show that the function in \eqref{eq:03/07-12:50} can still be made continuous if restricted to a smaller subset of the phase-space (although in such a way it ceases to be a skew-product semiflow). Let $\Theta$ be a suitable set of moduli of continuity as in {\rm Definition~\ref{def:ssmc}}, and $\theta_0\in C(\R^+,\R^+)$ be a further modulus of continuity. Consider the set
\begin{equation}\label{eq:F0}
\mathcal{F}_0:=\left\{\phi\in \mathcal{C}\mid \text{mod}(\phi)\le \theta_0\right\},
\end{equation}
and the suitable set of moduli of continuity
\begin{equation}\label{eq:ThetaF0}
\overline \Theta=\big\{\bar\theta^I_j\in C(\R^+,\R^+)\mid \bar\theta^I_j(s)=\max\{\theta^I_j(s),\theta_0(s)\}\big\},
\end{equation}
and let $\T_{\Theta \smash{\overline\Theta}}$ and $\sigma_{\Theta \smash{\overline\Theta}}$ be the topologies constructed from $ \Theta$ and $\overline\Theta$ as in Definition~\ref{def:hybridtopologies}.
If $E$ is any subset  of $\LC(\R^{2N},\R^N)$, let us denote by $\overline E_{\T_{\Theta \smash{\overline\Theta}}}$ and by $\overline E_{\sigma_{\Theta \smash{\overline\Theta}}}$ the closure of $E$ in $(\LC(\R^{2N},\R^N),\T_{\Theta \smash{\overline\Theta}})$ and in $(\LC(\R^{2N},\R^N),\sigma_{\Theta \smash{\overline\Theta}})$, respectively, and let $\U_{\T_{\Theta \smash{\overline\Theta}}}\subset\R^+\times\overline E_{\T_{\Theta \smash{\overline\Theta}}}\times \mathcal{F}_0$ and  $\U_{\sigma_{\Theta \smash{\overline\Theta}}}\subset\R^+\times\overline E_{\sigma_{\Theta \smash{\overline\Theta}}}\times \mathcal{F}_0$ be defined by
\begin{equation*}
\U_{\T_{\Theta \smash{\widehat\Theta}}}=\!\bigcup_{\substack{f\in\overline E_{\T_{\Theta \smash{\widehat\Theta}}},\\[0.5ex]\phi\in \mathcal{F}_0}}\!\!\{(t,f,\phi)\mid t\in I_{f,\phi}\}\ \quad\text{and}\ \quad
\U_{\sigma_{\Theta \smash{\widehat\Theta}}}=\!\bigcup_{\substack{f\in \overline E_{\sigma_{\Theta \smash{\widehat\Theta}}},\\[0.5ex]\phi\in \mathcal{F}_0}}\!\!\{(t,f,\phi)\mid t\in I_{f,\phi}\}
\,.
\end{equation*}
\begin{thm}\label{thm:ContinuitySolutions_F0_to_C}
Consider $E\subset\LC(\R^{2N},\R^N)$  with $L^1_{loc}$-equicontinuous $m$-bounds and let $\Theta$ be the suitable set of moduli of continuity given by the $m$-bounds in {\rm Remark~\ref{rmk:mod-cont-m-bounds}}.
\begin{itemize}[leftmargin=20pt,itemsep=2pt]
\item[\rm (i)]   Let $\big(\phi_{n}\big)_\nin$ be a sequence converging uniformly to $\phi$ in $\mathcal{C}$ and let $\theta_0$ be the modulus of continuity shared by the functions $\{\phi_n\mid\nin\}\cup\{\phi\}$.
    Furthermore, let  $\overline\Theta$ be the suitable set of moduli of continuity constructed in \eqref{eq:ThetaF0}. If $(f_n)_\nin$ is a sequence in $E$ converging to $f$ in $(\LC(\R^{2N},\R^N),\sigma_{\smash{\Theta\overline\Theta}})$, then, with the notation of {\rm Theorem~\ref{eq:solLCEDDE}}, one has that
\begin{equation*}
 x(\cdot,f_n,\phi_{n}) \xrightarrow{\nti}  x(\cdot,f,\phi)
\end{equation*}
uniformly in any $[-1,T]\subset I_{f,\phi}$.
\item[\rm (ii)] If additionally $E$ is invariant with respect to the base flow, $\overline E$ denotes the closure of $E$ in $(\LC(\R^{2N},\R^N),\T)$, where $\T\in\{\T_{\Theta \smash{\overline\Theta}},\,\sigma_{\Theta \smash{\overline\Theta}}\}$, and $\mathcal{F}_0$ is the set constructed as in \eqref{eq:F0} then  the map
\begin{equation*}
\quad\ \Phi\colon \U_\T\subset \R^+\times\overline E\times \mathcal{F}_0  \to \overline E\times \mathcal{C},\quad
 (t,f,\phi)\mapsto  \big(f_t, x_t(\cdot,f,\phi)\big)
\end{equation*}
is continuous.
\end{itemize}
\end{thm}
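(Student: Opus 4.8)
The plan is to prove (i) by a continuation argument along the maximal interval of existence of $x(\cdot,f,\phi)$, treating the delay by the method of steps, and then to deduce (ii) from (i) together with the continuity of the base flow; the hard part will be the a~priori bounds on the perturbed solutions, since no information on the $l$-bounds is available. Fix $T<b_{f,\phi}$, set $R:=\|x(\cdot,f,\phi)\|_{\mathcal{C}([-1,T])}$, and note that, since $\phi_n\to\phi$ uniformly, we may assume $\|\phi_n\|_{\mathcal{C}}\le R$ for all $n$. Let $T^{*}$ be the supremum of the $\tau\in[-1,b_{f,\phi})$ for which there is $n_0$ such that $x(\cdot,f_n,\phi_n)$ is defined on $[-1,\tau]$ for $n\ge n_0$ and converges to $x(\cdot,f,\phi)$ uniformly on $[-1,\tau]$; since $x(\cdot,f_n,\phi_n)=\phi_n$ on $[-1,0]$ we have $T^{*}\ge 0$, and it suffices to show that $T^{*}<b_{f,\phi}$ is impossible. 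Assume otherwise, pick $T_0$ with $0\le T_0\le T^{*}$ for which the above property holds at $\tau=T_0$ (take $T_0<T^{*}$ if $T^{*}>0$, and $T_0=0$ if $T^{*}=0$), and $T_1\in(T^{*},b_{f,\phi})$ with $T_1-T_0\le 1$. On the step interval $[T_0,T_1]$ the delayed argument of the $f_n$-equation uses only the values of $x(\cdot,f_n,\phi_n)$ on $[T_0-1,T_1-1]\subseteq[-1,T_0]$, where, for $n$ large, these solutions lie in $B_{R+1}$ and converge uniformly; hence $y_n(t):=x(t-1,f_n,\phi_n)\to y(t):=x(t-1,f,\phi)$ uniformly on $[T_0,T_1]$, with $|y_n|\le R+1$ and with $y_n$ admitting a modulus of continuity in $\overline\Theta$ — indeed $y_n$ is either a segment of $\phi_n$ (of modulus $\le\theta_0$) or a segment of a solution (of modulus controlled by the $m$-bounds), and it is precisely this dichotomy that forces $\widehat\Theta$ to be taken equal to $\overline\Theta$ in \eqref{eq:ThetaF0}.

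Next I would set $\rho:=R+1$, choose $j\in\N$ with $B_\rho\times B_\rho\subset B_j\subset\R^{2N}$ and $\rho\le j$, and take $\theta_j\in\Theta$ (the modulus from Remark~\ref{rmk:mod-cont-m-bounds}) and the $L^1_{loc}$-equicontinuous family $S^j$ of $m$-bounds of $E$ on $B_j$. Let $\tau_n\in(T_0,T_1]$ be the largest time up to which $x(\cdot,f_n,\phi_n)$ is defined and stays in $B_\rho$; while this holds one has $|x'(\cdot,f_n,\phi_n)|\le m^j_n$ for some $m^j_n\in S^j$, so on $[T_0,\tau_n]$ the solution is bounded by $\rho$, admits $\theta_j$ as a modulus of continuity, and cannot blow up. The key claim is that $\tau_n=T_1$ for all large $n$. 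If not, then $|x(\tau_n,f_n,\phi_n)|=\rho$ along a subsequence; extending each such solution by its terminal value beyond $\tau_n$ preserves the bound $\rho$ and the modulus $\theta_j$, so Ascoli--Arzel\`a yields a further subsequence converging uniformly on $[T_0,T_1]$ to some continuous $v$, with $\tau_n\to\tau^{*}$ and $|v(\tau^{*})|=\rho$. Passing to the limit in the integral equation $x(t,f_n,\phi_n)=x(T_0,f_n,\phi_n)+\int_{T_0}^{t}f_n\big(s,x(s,f_n,\phi_n),y_n(s)\big)\,ds$ on $[T_0,\tau_n]$ with the help of Lemma~\ref{lem:conv-subintHYBRID} — legitimate because the $x$-arguments have modulus $\le\theta_j$, the $y$-arguments have modulus in $\overline\Theta$, all arguments stay in $B_j$, and irrational endpoints can be approximated by rational ones using the $L^1_{loc}$-equicontinuity of $S^j$ — one finds $v(t)=x(T_0,f,\phi)+\int_{T_0}^{t}f\big(s,v(s),y(s)\big)\,ds$ on $[T_0,\tau^{*})$. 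Since $x(\cdot,f,\phi)$ solves the same equation on $[T_0,T_1]$, uniqueness (Theorem~\ref{thm:13.04-16:49}) gives $v=x(\cdot,f,\phi)$ there, whence $|v(\tau^{*})|\le R<\rho$, a contradiction; hence $\tau_n=T_1$ eventually.

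Consequently, for $n$ large $x(\cdot,f_n,\phi_n)$ is defined on $[-1,T_1]$ and lies in $B_\rho$ with modulus $\theta_j$ on $[T_0,T_1]$; running the same Ascoli--Arzel\`a and uniqueness argument on all of $[T_0,T_1]$ (now with no exit time to control) shows that every subsequence of $\big(x(\cdot,f_n,\phi_n)\big)_{\nin}$ has a further subsequence converging uniformly to $x(\cdot,f,\phi)$, so $x(\cdot,f_n,\phi_n)\to x(\cdot,f,\phi)$ uniformly on $[T_0,T_1]$, hence on $[-1,T_1]$, contradicting the definition of $T^{*}$ and proving (i). For (ii), Proposition~\ref{prop:07.07-19:44p=1} ensures that $\overline E\subset\LC(\R^{2N},\R^N)$ still has $L^1_{loc}$-equicontinuous $m$-bounds with the same associated $\Theta$, so (i) applies verbatim with $E$ replaced by $\overline E$; moreover $\mathcal{F}_0$ in \eqref{eq:F0} is exactly the set of $\phi\in\mathcal{C}$ admitting $\theta_0$ as a modulus of continuity, and convergence in $\T\in\{\T_{\Theta\overline\Theta},\sigma_{\Theta\overline\Theta}\}$ implies convergence in $\sigma_{\Theta\overline\Theta}$, so any $(t_n,f_n,\phi_n)\to(t,f,\phi)$ in $\mathcal{U}_\T$ satisfies the hypotheses of (i). Fixing $T\in(t,b_{f,\phi})$, part (i) gives that $x(\cdot,f_n,\phi_n)$ is eventually defined on $[-1,T]$ and converges uniformly there, so for $n$ large
\[
\|x_{t_n}(\cdot,f_n,\phi_n)-x_t(\cdot,f,\phi)\|_{\mathcal{C}}\le\|x(\cdot,f_n,\phi_n)-x(\cdot,f,\phi)\|_{\mathcal{C}([-1,T])}+\sup_{s\in[-1,0]}\big|x(s+t_n,f,\phi)-x(s+t,f,\phi)\big|\longrightarrow 0
\]
by (i) and the uniform continuity of $x(\cdot,f,\phi)$ on $[-1,T]$; combined with the continuity of the base flow $(t,f)\mapsto f_t$ on $\overline E$ for the topology $\T$ (which holds since $\Theta$ comes from the $m$-bounds of the translation-invariant set $E$ and $\theta_0$ is interval-independent), this gives $\Phi(t_n,f_n,\phi_n)\to\Phi(t,f,\phi)$, i.e. the continuity of $\Phi$ on $\mathcal{U}_\T$.

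The main obstacle, as anticipated, is that no bound on the $l$-bounds is assumed, so no Gronwall-type estimate keeps the perturbed solutions from escaping a fixed ball or blowing up before time $T$; the contradiction device of the second paragraph — constant extension past $\tau_n$, Ascoli--Arzel\`a, passage to the limit through the weak hybrid topology via Lemma~\ref{lem:conv-subintHYBRID}, and uniqueness — is what replaces that estimate, and making the modulus-of-continuity bookkeeping fit Lemma~\ref{lem:conv-subintHYBRID} (in particular recognising $\overline\Theta$ as the right class for the delayed arguments) is the other delicate point.
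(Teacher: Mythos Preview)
The paper's own proof of this theorem is a pair of citations: part~(i) is \cite[Theorem~3.6]{paper:LNO3}, and part~(ii) follows from (i) together with the continuity of $(t,f)\mapsto f_t$ from \cite[Theorem~3.8]{paper:LNO3}. Your proposal supplies the argument in full, and the mechanism you use --- truncate the perturbed solutions at the exit time from $B_\rho$, obtain equicontinuity from the $L^1_{loc}$-equicontinuous $m$-bounds, apply Ascoli--Arzel\`a, pass to the limit in the integral equation via Lemma~\ref{lem:conv-subintHYBRID}, and derive a contradiction from uniqueness --- is precisely the template the paper itself deploys in the proof of Theorem~\ref{thm:K_xK_y+lipschitz=>cont skew-prod}. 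So the approach is the intended one and the argument is essentially correct.

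There is one small bookkeeping gap. Your continuation device via $T^{*}$ can produce a step interval $[T_0,T_1]$ with $0<T_0<1<T_1$, and then $y_n(\cdot)=x(\cdot-1,f_n,\phi_n)$ on $[T_0,T_1]$ is a \emph{concatenation} of a segment of $\phi_n$ and a segment of a solution. The modulus of such a concatenation is bounded by $\theta_0+\theta_j$, not by $\bar\theta_j=\max\{\theta_j,\theta_0\}$, so $y_n$ need not lie in any $\widehat\K^I_{j'}$ built from $\overline\Theta$, and the appeal to Lemma~\ref{lem:conv-subintHYBRID}(ii) is not literally justified as stated. The fix is painless and is what the paper does in Theorem~\ref{thm:K_xK_y+lipschitz=>cont skew-prod}: drop the $T^{*}$ device and simply iterate over the steps $[0,1],[1,2],\dots$ (with a possibly shorter final step). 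On the first step $y_n=\phi_n(\cdot-1)$ has modulus $\theta_0\le\bar\theta_j$; on each subsequent step $y_n$ is entirely a solution segment with modulus $\theta_j\le\bar\theta_j$; no concatenation ever occurs, and the rest of your argument goes through verbatim.
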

\begin{proof}
The proof of (i) is given in~\cite[Theorem 3.6]{paper:LNO3}. On the other hand, (ii) is a consequence of (i) and of the continuity of the function $(t,f)\mapsto f_t$  proved in~\cite[Theorem~3.8]{paper:LNO3}.
\end{proof}
\section{Monotone semiflows}\label{sec:monotone}
In this section, we address the problem of the global continuity for the skew-product semiflows induced by Carath\'eodory delay differential equations whose vector fields satisfy a specific property of monotonicity. We initially recall the definition of Kamke's conditions. In this setup, they split into two properties of monotonicity, ~\ref{Kx} and ~\ref{Ky}, one for each spatial variable. Then, we prove several technical results. For example, we show that ~\ref{Kx} and ~\ref{Ky} are robust with respect to the limits in the hybrid topologies, and that~\ref{Ky} is sufficient to provide equivalence of the hybrid topologies (either weak or strong). As a consequence, we obtain that the  skew-product semiflow induced by a set of Carath\'eodory functions satisfying ~\ref{Ky}, is continuous if endowed with $\T_{\Theta D}$ or $\sigma_{\Theta D}$. Finally, we recall the definition of monotone skew-product semiflows and prove that ~\ref{Kx} and ~\ref{Ky} together, characterize a monotone skew-product semiflow. Hence, we obtain the continuity of such semiflows as a corollary of the previous results. The last part of the section deals with a stronger assumption of monotonicity which allows to obtain two additional results of continuity of the skew-product semiflow  when pointwise (and thus weaker) topologies are used. Moreover, we relate the previous results to Carath\'eodory ordinary differential equations.
\par\smallskip
From the usual componentwise strong partial ordering in $\R^N$, that is,
\begin{equation*}
\begin{split}
y  \leq z \quad & \Longleftrightarrow \quad y_i\leq z_i\quad
\text{ for}
\quad i=1,\ldots,N\,,\\
 y  < z  \quad   & \Longleftrightarrow  \quad  \,y \,\leq\, z\quad\text{ and }y_i < z_i
 \; \text{ for some}\; i\in\{1,\ldots,N\}\,,\\
y  \ll z \quad & \Longleftrightarrow \quad y_i < z_i\quad \text{
for}\quad i=1,\ldots,N\,.
\end{split}
\end{equation*} we obtain a
strong partial ordering on $\mathcal{C}=\mathcal{C}([-1,0],\R^N)$ defined by
\begin{align}\nonumber
\phi \leq \psi \quad & \Longleftrightarrow \quad \phi(s)\leq
\psi(s)\; \text{ for each }\;s\in[-1,0]\,,\\ \label{orderingC}
 \phi < \psi  \quad & \Longleftrightarrow  \quad \phi \leq \psi\quad\text{ and
 }\; \phi\neq \psi  \,, \\ \nonumber
\phi \ll \psi \quad & \Longleftrightarrow \quad \phi(s)\ll \psi(s)\; \text{
for each }\;s\in[-1,0]\,.
\end{align}
The positive cone is $\mathcal{C}^+:=C([-1,0], (\R^+)^N)=\{\phi\in \mathcal{C}\mid \phi\geq 0\}$  with nonempty interior $\Int \mathcal{C}^+=\{\phi\in \mathcal{C}\mid \phi\gg 0\}$.
\begin{defn}[Kamke's conditions]\label{Kamke}
We say that $ f\in\LC(\R^{2N},\R^N)$ satisfies
\begin{enumerate}[label=\upshape(\textbf{K}$_x$),leftmargin=27pt]
\item\label{Kx} if for any $a,b,c\in\R^N$ with $a\le b$ and $a_k=b_k$ for some $k\in\{1,\dots,N\}$, then
\[
f_k(t,a,c)\le f_k(t,b,c),\qquad \text{for a.e. } t\in\R;\\[1ex]
\]
\end{enumerate}
\begin{enumerate}[label=\upshape(\textbf{K}$_y$),leftmargin=27pt]
\item\label{Ky} if for any $a,b,c\in\R^N$ with $b\le c$, then
\[
f_k(t,a,b)\le f_k(t,a,c),\qquad \text{for all } k=1,\dots,N,\text{ and a.e. } t\in\R.\\[1ex]
\]
\end{enumerate}
If both ($K_x$) and ($K_y$) hold true, we say that $f$ satisfies the \emph{Kamke's conditions} or, equivalently, that $f$ is \emph{cooperative}.
\end{defn}
Our first result is to show that ~\ref{Kx} and ~\ref{Ky} can be propagated through the limits in any of the hybrid topologies considered in Definition~\ref{def:hybridtopologies}.
\begin{prop}\label{prop:Kx-Ky propagate to closure}
  Consider $E\subset\LC(\R^{2N},\R^N)$. Let $\Theta$ be any suitable set of moduli of continuity as in {\rm Definition~\ref{def:ssmc}}, $D$ a countable dense  subset  of $\R^N$, and  $\overline E$ the closure of $E$ in $(\LC(\R^{2N},\R^N),\sigma_{\Theta D})$. Then
\begin{itemize}[leftmargin=19pt,itemsep=2pt]
\item[\rm (i)] if any function in $E$ satisfies ~\ref{Ky}, then also any function in $\overline E$ does;
\item[\rm (ii)] if any function in $E$ satisfies ~\ref{Kx}, then also any function in $\overline E$ does.
\end{itemize}
\end{prop}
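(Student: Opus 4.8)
The plan is to use that $\sigma_{\Theta D}$ is metrizable (see Definition~\ref{def:hybridtopologies}), so $\overline E$ is its own sequential closure: it is enough to take $f\in\LC(\R^{2N},\R^N)$ and a sequence $(f_n)_\nin$ in $E$ with $f_n\to f$ in $\sigma_{\Theta D}$, and to transfer the relevant Kamke inequalities from the $f_n$ to $f$. The elementary device behind the transfer is that, for any $a\in\R^N$, the constant map $x\equiv a$ belongs to $\K_j^I$ as soon as $j\ge|a|$ (its modulus of continuity is $\equiv 0\le\theta_j^I$); therefore, for every $c\in D$, every rational interval $I=[q_1,q_2]$ and every large $j$, the seminorm $p_{I,c,j}$ bounds $\bigl|\int_I(f_n-f)(t,a,c)\,dt\bigr|$, whence
\[
\int_I f_n(t,a,c)\,dt\ \xrightarrow{\nti}\ \int_I f(t,a,c)\,dt\qquad\text{for all }a\in\R^N,\ c\in D,
\]
these integrals being finite by~\ref{C2}. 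Combined with the standard fact that a function $h\in L^1_{loc}$ with $\int_{q_1}^{q_2}h\ge 0$ for all rationals $q_1<q_2$ has a nondecreasing primitive, hence satisfies $h\ge 0$ a.e., the whole problem reduces to producing, for the appropriate components and arguments, the nonnegativity of integrals of the relevant differences over all rational intervals.

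\smallskip
For part~(i) I would fix $k$ and $b,c\in D$ with $b\le c$ and use~\ref{Ky} for each $f_n$ to get $\int_I f_{n,k}(t,a,b)\,dt\le\int_I f_{n,k}(t,a,c)\,dt$ for every $a\in\R^N$ and every rational $I$; letting $n\to\infty$ and then applying the primitive argument gives $f_k(\cdot,a,b)\le f_k(\cdot,a,c)$ a.e. Taking the union, over $a,b,c\in D$ with $b\le c$ and over $k$, of the exceptional null sets, together with the null set where $f(t,\cdot,\cdot)$ is not continuous (recall $\LC\subset\SC$), yields a null set off which $f(t,\cdot,\cdot)$ is continuous and $f_k(t,a,b)\le f_k(t,a,c)$ for all $a,b,c\in D$ with $b\le c$. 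To remove the restriction $a,b,c\in D$, I would approximate an arbitrary $a$ and arbitrary $b\le c$ by $a^{(m)},b^{(m)},c^{(m)}\in D$ with $a^{(m)}\to a$, $b^{(m)}\to b$, $c^{(m)}\to c$, $b^{(m)}\ll b$ and $c^{(m)}\gg c$ (possible since the boxes $\prod_i(b_i-\tfrac1m,b_i)$ and $\prod_i(c_i,c_i+\tfrac1m)$ are nonempty open sets); then $b^{(m)}\ll b\le c\ll c^{(m)}$ forces $b^{(m)}\le c^{(m)}$, and passing to the limit in $f_k(t,a^{(m)},b^{(m)})\le f_k(t,a^{(m)},c^{(m)})$ via the continuity of $f(t,\cdot,\cdot)$ establishes~\ref{Ky} for $f$.

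\smallskip
For part~(ii) the extra step is to recast~\ref{Kx} in the same form. Writing $v^{[k\gets u_k]}$ for the vector obtained from $v\in\R^N$ by replacing its $k$-th entry with $u_k$, note that if $u\le v$ then $u\le v^{[k\gets u_k]}$ with coinciding $k$-th entries, so~\ref{Kx} for $f_n$ reads $f_{n,k}(t,u,c)\le f_{n,k}(t,v^{[k\gets u_k]},c)$ a.e., for every $c\in\R^N$. Repeating the argument of part~(i) with the constant test maps $x\equiv u$ and $x\equiv v^{[k\gets u_k]}$, I would obtain $f_k(\cdot,u,c)\le f_k(\cdot,v^{[k\gets u_k]},c)$ a.e. for all $u,v,c\in D$ with $u\le v$, and thus a null set off which $f(t,\cdot,\cdot)$ is continuous and these inequalities hold. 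Finally, for arbitrary $a\le b$ with $a_k=b_k$ and arbitrary $c$, I would pick $u^{(m)}\to a$, $v^{(m)}\to b$, $c^{(m)}\to c$ in $D$ with $u^{(m)}\ll a$ and $v^{(m)}\gg b$; then $u^{(m)}\le v^{(m)}$, the point $(v^{(m)})^{[k\gets(u^{(m)})_k]}$ tends to $b$ (its $k$-th entry tends to $a_k=b_k$ and the others to the corresponding $b_i$), and passing to the limit in $f_k(t,u^{(m)},c^{(m)})\le f_k\bigl(t,(v^{(m)})^{[k\gets(u^{(m)})_k]},c^{(m)}\bigr)$ yields~\ref{Kx} for $f$.

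\smallskip
The routine parts are the metrizability/sequential reduction, the $L^1_{loc}$ bookkeeping for the integrals, and the elementary statement about primitives. The step requiring care — and the one I expect to be the main obstacle — is the passage from inequalities valid only at the points of $D$ to inequalities valid everywhere: in part~(ii) this forces the reformulation of~\ref{Kx} through the coordinate substitution $v\mapsto v^{[k\gets u_k]}$, and in both parts it requires choosing order-respecting approximants (pushing the smaller vector strictly down and the larger one strictly up), which is precisely where one uses that strict inequality is an open condition rather than any special structure of $D$.
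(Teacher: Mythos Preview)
Your proposal is correct and follows essentially the same route as the paper: sequential characterization via metrizability, testing against constant functions in $\K_j^I$ to pass integral convergence to pointwise a.e.\ inequalities through Lebesgue differentiation, and then a density argument to remove the restriction to $D$. The only differences are cosmetic: the paper keeps $a\in\R^N$ arbitrary throughout and invokes the Lipschitz property~\ref{L} (rather than mere continuity from $\SC$) for the extension step, and it simply omits the proof of~(ii) as ``similar''; your explicit reformulation via $v^{[k\gets u_k]}$ and the order-respecting approximants $u^{(m)}\ll a$, $v^{(m)}\gg b$ is a clean way to carry out that omitted part.
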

\begin{proof}
 (i) Consider $f\in \overline E$ and a sequence $(f_n)_\nin$ of functions in $E$ converging to $f$ with respect to $\sigma_{\Theta D}$. First we take $a\in\R^N$ and we assume that $b$ and $c$ belongs to $D$. Then, from the convergence in $\sigma_{\Theta D}$ (see Definition~\ref{def:hybridtopologies}), fixed any compact interval $I\subset \R$ with rational extrema we deduce that
\begin{equation}\label{eq:convg}
\lim_{n\to\infty}\int_I f_n(s,a,b)=\int_I f(s,a,b)\,ds \;\; \text{and} \;\; \lim_{n\to\infty}\int_I f_n(s,a,c)=\int_I f(s,a,c)\,ds\,.
\end{equation}
Moreover,  considering $b\leq c$, $k=1,\ldots,N$ and any $t$, $h\in\Q$ with $h\geq 0$, from~\ref{Ky} we obtain
\begin{equation*}
\frac{1}{h}\int_t^{t+h}\left(f_n\right)_k(s,a,b)\,ds\le \frac{1}{h}\int_t^{t+h}\left(f_n\right)_k(s,a,c)\,ds,
\end{equation*}
which thanks to \eqref{eq:convg}, as $\nti$ provides
\begin{equation*}
\frac{1}{h}\int_t^{t+h}f_k(s,a,b)\,ds\le \frac{1}{h}\int_t^{t+h}f_k(s,a,c)\,ds.
\end{equation*}
Due to the continuity of the integral operator, the previous inequality is satisfied for any $t,h\in\R$, with $h\ge 0$. Therefore, using Lebesgue's theorem as $h\to0$ we conclude that
\[
f_k(t,a,b)\le f_k(t,a,c),\qquad\text{ for a.e. } t\in\R\,,
\]
provided that $b$, $c\in D$ and $b\leq c$. Finally, from~\ref{L} and the density of $D$ we deduce the result for any $b$, $c\in \R^N$
which concludes the proof of (i). The proof of (ii) is omitted because it is similar.
\end{proof}
Next, we prove a technical lemma which allows us to pass from~\ref{Ky} and~\ref{Kx} (which are formulated pointwise) to a condition of monotonicity which involves continuous functions.
\begin{lem}\label{lem:f(t,a,b)->f(t,x(t),y(t))}
Consider $f\in\LC(\R^{2N},\R^N)$. Then
\begin{itemize}[leftmargin=19pt,itemsep=2pt]
\item[\rm (i)] if $f$ satisfies~\ref{Ky}, for any interval $I\subset\R$ and any functions $x,\,y,\,z\in \mathcal{C}(I,\R^N)$ such that $y(t)\le z(t)$ for all $t\in I$,  one has that
\begin{equation}\label{eq:18/09-19:12}
f\big(t,x(t),y(t)\big)\le f\big(t,x(t),z(t)\big),\qquad \text{for a.e. } t\in I;
\end{equation}
\item[\rm (ii)] if $f$ satisfies~\ref{Kx}, for any interval $I\subset\R$ and any functions $x,\,y,\,z\in \mathcal{C}(I,\R^N)$ such that $x(t)\le y(t)$ and $x_k(t)=y_k(t)$ for all $t\in I$ and some $k\in{1,\ldots,N}$,  one has that
\begin{equation*}
f_k\big(t,x(t),z(t)\big)\le f_k\big(t,y(t),z(t)\big),\qquad \text{for a.e. } t\in I.
\end{equation*}
\end{itemize}
\end{lem}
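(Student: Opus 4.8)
The plan is to deduce both statements from ``global'' versions of \ref{Ky} and \ref{Kx} in which the exceptional null set no longer depends on the points at which the conditions are tested. The starting point is that $\LC(\R^{2N},\R^N)\subset\SC(\R^{2N},\R^N)$, so there is a null set $\mathcal{N}_0\subset\R$ such that $f(t,\cdot)\colon\R^{2N}\to\R^N$ is continuous for every $t\notin\mathcal{N}_0$.

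For (i): for each triple $(a,b,c)\in\Q^N\times\Q^N\times\Q^N$ with $b\le c$, assumption \ref{Ky} provides a null set $\mathcal{N}_{a,b,c}$ outside which $f_k(t,a,b)\le f_k(t,a,c)$ for all $k$. Put $\mathcal{N}=\mathcal{N}_0\cup\bigcup_{a,b,c\in\Q^N,\ b\le c}\mathcal{N}_{a,b,c}$, which is still a null set, being a countable union of null sets. The claim is that for every $t\notin\mathcal{N}$, every $a\in\R^N$ and every $b\le c$ in $\R^N$, one has $f_k(t,a,b)\le f_k(t,a,c)$ for all $k$. To prove it I would approximate $a$, $b$, $c$ coordinatewise by rationals $a^{(n)},b^{(n)},c^{(n)}\in\Q^N$ with $b^{(n)}\le c^{(n)}$ for every $n$ --- possible because one can take $b^{(n)}_i=c^{(n)}_i$ on the coordinates where $b_i=c_i$ and rationals with $b^{(n)}_i<c^{(n)}_i$ on those where $b_i<c_i$ --- and then pass to the limit in $f_k(t,a^{(n)},b^{(n)})\le f_k(t,a^{(n)},c^{(n)})$ using the continuity of $f(t,\cdot)$ at $(a,b)$ and $(a,c)$. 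Finally, given $I$ and $x,y,z\in\mathcal{C}(I,\R^N)$ with $y(t)\le z(t)$ on $I$, for every $t\in I\setminus\mathcal{N}$ I apply the claim with $(a,b,c)=(x(t),y(t),z(t))$ to obtain \eqref{eq:18/09-19:12}, which therefore holds for a.e. $t\in I$.

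Part (ii) goes along identical lines, with \ref{Kx} in place of \ref{Ky} and the countable index set now consisting of the triples $(a,b,c)\in\Q^N\times\Q^N\times\Q^N$ with $a\le b$ and $a_k=b_k$; the approximation step is the same, choosing the rationals so that $a^{(n)}_k=b^{(n)}_k$ and approximating the remaining coordinates as above, so that the relations $a^{(n)}\le b^{(n)}$ and $a^{(n)}_k=b^{(n)}_k$ are preserved along the sequence. One then applies the resulting global form of \ref{Kx} pointwise at $(x(t),y(t),z(t))$ for $t$ outside the corresponding null set.

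The only genuinely delicate point --- and the reason the lemma needs an argument at all --- is that one cannot simply take the union of the null sets coming from \ref{Ky} (resp. \ref{Kx}) over the uncountably many triples $(x(t),y(t),z(t))$, $t\in I$; the proof must be routed through a countable dense subset of $\R^N$ together with the a.e.\ continuity of $f(t,\cdot)$ inherited from $\SC$, taking care that the dense approximation respects the order relations involved. Everything else is routine.
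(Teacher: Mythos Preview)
Your proof is correct, but it takes a different route from the paper's. The paper fixes a countable dense subset $D=\{s_n\mid n\in\N\}$ of the \emph{interval} $I$ rather than of $\R^N$: for each $n$, condition~\ref{Ky} applied to the fixed triple $(x(s_n),y(s_n),z(s_n))$ yields a full-measure set $J_n\subset I$ on which $f(t,x(s_n),y(s_n))\le f(t,x(s_n),z(s_n))$; on $J=\bigcap_n J_n$ one then approximates any $t$ by a sequence $s_{n_k}\to t$ and passes to the limit using~\ref{L} and the continuity of $x,y,z$. Your argument instead passes through $\Q^N$ and the a.e.\ continuity of $f(t,\cdot)$ inherited from~\ref{SC}, producing a single null set $\mathcal{N}\subset\R$ on whose complement~\ref{Ky} (resp.~\ref{Kx}) holds for \emph{all} real triples simultaneously. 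This is in fact a slightly stronger conclusion---the exceptional set is universal rather than depending on $x,y,z$---and it uses only the $\SC$ structure rather than the Lipschitz bound~\ref{L}. The paper's version, on the other hand, is a bit more economical in that it never needs to verify that the rational approximants preserve the order constraints.
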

\begin{proof}
  Let $D=\{s_n \mid n\in\N\}$ be a dense subset of $I$. From~\ref{Ky} we know that given $n\in\N$ there is a subset $J_n\subset I$ of full
 measure such that
\begin{equation}\label{Jn}
 f\big(t,x(s_n),y(s_n)\big)\le f\big(t,x(s_n),z(s_n)\big)\qquad \text{for each } t\in J_n.
\end{equation}
Next we consider the subset $J=\bigcap_{n=1}^\infty J_n\subset I$, also of full measure and fix $t\in J$. From the density of $D$ we find a subsequence
$(s_{n_k})_{k\in N}$ such that $\lim_{k\to\infty} s_{n_k}=t$, and
from~\eqref{Jn} we deduce that
\begin{equation}\label{ineqsn}
f\big(t,x(s_{n_k}),y(s_{n_k})\big)\le f\big(t,x(s_{n_k}),z(s_{n_k})\big)\qquad \text{for each } k\in\N\,.
\end{equation}
Moreover, from~\ref{L} and the continuity of the functions $x$, $y$ and $z$ we obtain
\begin{align*}
\lim_{k\to\infty}f\big(t,x(s_{n_k}),y(s_{n_k}\big)&=f\big(t,x(t),y(t)\big)\,,\\
\lim_{k\to\infty}f\big(t,x(s_{n_k}),z(s_{n_k}\big)&=f\big(t,x(t),z(t)\big)\,,
\end{align*}
which together with~\eqref{ineqsn} yields~\eqref{eq:18/09-19:12} for $t\in J$, and finishes the proof of (i). The proof of (ii) is omitted because it is similar.
\end{proof}
The previous results allow to obtain a crucial outcome. All the considered weak (resp. strong) topologies coincide on closed subsets of $\LC(\R^{2N},\R^N)$ where all the elements satisfy the property ~\ref{Ky}.
\begin{prop}\label{prop:K_y=>equiv topol}
 Let $E\subset\LC(\R^{2N},\R^N)$, $\Theta$ and  $\widehat\Theta$ be any pair of suitable sets of moduli of continuity as in {\rm Definition~\ref{def:ssmc}}, and $D$ a countable dense subset of $\R^N$. If all the functions in $E$ satisfy ~\ref{Ky}, then
\[
\cls_{(\LC,\sigma_{\Theta\smash{\widehat\Theta}})} (E)=\cls_{(\LC,\sigma_{\Theta D})} (E)\quad\text{and}\quad \cls_{(\LC,\T_{\Theta\smash{\widehat\Theta}})} (E)=\cls_{(\LC,\T_{\Theta D})} (E).
\]
\end{prop}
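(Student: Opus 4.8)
The key point is that, under \ref{Ky}, the supremum over the larger index set $\widehat\K_j^{I-1}$ (or $\widehat\K_j^I$, using Remark~\ref{rmk:just-Theta}) appearing in the seminorms $p_{I,j}$ of $\T_{\Theta\smash{\widehat\Theta}}$ and $\sigma_{\Theta\smash{\widehat\Theta}}$ is in fact realised along the ``diagonal'' of monotone pairs, and on that diagonal it agrees with the behaviour governing $\T_{\Theta D}$ and $\sigma_{\Theta D}$. Since both families of topologies are metrisable (Definition~\ref{def:hybridtopologies}), proving the equality of closures amounts to checking that a sequence $(f_n)_\nin\subset E$ converging to some $f$ in one topology also converges to the same $f$ in the other; here $f$ lies in either closure and thus, by Proposition~\ref{prop:Kx-Ky propagate to closure}(i), still satisfies \ref{Ky}, so Lemma~\ref{lem:f(t,a,b)->f(t,x(t),y(t))}(i) is available for $f$ as well as for the $f_n$.

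\textbf{Step 1: the easy inclusion.} One always has $\cls_{(\LC,\sigma_{\Theta D})}(E)\subseteq\cls_{(\LC,\sigma_{\Theta\smash{\widehat\Theta}})}(E)$ and similarly for the strong topologies: indeed, taking $y$ to be a \emph{constant} function with value $d\in D$ (which belongs to $\widehat\K_j^{I-1}$ for $j$ large once $|d|\le j$) inside the seminorm $p_{I,j}$ of $\sigma_{\Theta\smash{\widehat\Theta}}$ recovers, up to the supremum over $x\in\K_j^I$, the seminorm $p_{I,d,j}$ of $\sigma_{\Theta D}$. Hence $\sigma_{\Theta\smash{\widehat\Theta}}$ is finer than $\sigma_{\Theta D}$ on $\LC(\R^{2N},\R^N)$, and convergence in the former implies convergence in the latter; the same argument with $|\cdot|^1$ replaced by $|\cdot|$ inside an absolute value (or by the $L^1$-norm) handles $\T$.

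\textbf{Step 2: the reverse inclusion --- the main obstacle.} Fix $f$ in $\cls_{(\LC,\sigma_{\Theta D})}(E)$ and a sequence $(f_n)_\nin\subset E$ with $f_n\to f$ in $\sigma_{\Theta D}$; the goal is $f_n\to f$ in $\sigma_{\Theta\smash{\widehat\Theta}}$, i.e. for each $I=[q_1,q_2]$ with $q_1,q_2\in\Q$ and each $j\in\N$,
\begin{equation*}
\sup_{x\in\K_j^I,\,y\in\widehat\K_j^{I}}\left|\int_I\big[f_n\big(t,x(t),y(t)\big)-f\big(t,x(t),y(t)\big)\big]\,dt\right|\xrightarrow{\nti}0.
\end{equation*}
Since $\K_j^I$ and $\widehat\K_j^I$ are compact in $C(I,B_j)$, it suffices to exhibit, for each fixed pair $(x,y)$ and each $\ep>0$, an $L^1(I)$-integrable ``envelope'' dominating the integrand uniformly in $n$, and to note that $\widehat\K_j^I$ can be approximated by piecewise-affine functions whose range is covered by finitely many points of $D$; here \ref{Ky} enters. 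Concretely: partition $I$ into finitely many rational subintervals so small that the oscillation of $y$ on each is $<\ep$, and on each subinterval sandwich $y(t)$ between two constants $d^-\le d^+$ taken in $D$ with $|d^+-d^-|<2\ep$; by Lemma~\ref{lem:f(t,a,b)->f(t,x(t),y(t))}(i) applied to both $f_n$ and $f$,
\begin{equation*}
f_\bullet\big(t,x(t),d^-\big)\le f_\bullet\big(t,x(t),y(t)\big)\le f_\bullet\big(t,x(t),d^+\big)
\end{equation*}
componentwise a.e.\ on that subinterval, for $f_\bullet\in\{f_n,f\}$. Integrating and using that $\int f_n(t,x(t),d^{\pm})\,dt\to\int f(t,x(t),d^{\pm})\,dt$ (this is exactly $\sigma_{\Theta D}$-convergence, via Lemma~\ref{lem:conv-subintHYBRID}(ii), which is legitimate because $d^\pm\in D$), one controls $\int f_n(t,x(t),y(t))\,dt - \int f(t,x(t),y(t))\,dt$ on each subinterval up to an error bounded by $\int\big[f(t,x(t),d^+)-f(t,x(t),d^-)\big]\,dt$ plus a vanishing term; the former is itself small once the $m$-bounds of $f$ on $B_j$ are used together with the $L$-Lipschitz estimate \ref{L}, shrinking $|d^+-d^-|$. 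Summing over the finitely many subintervals and taking $n$ large gives the claim for each fixed $(x,y)$; uniformity in $(x,y)\in\K_j^I\times\widehat\K_j^I$ follows from compactness and the fact that the whole estimate depends on $y$ only through its modulus of continuity $\hat\theta_j^I$ and on $x$ only through $\theta_j^I$, both of which are fixed. The main difficulty is precisely making this ``sandwich'' argument uniform over the infinite family $\widehat\K_j^I$ while keeping the comparison anchored at the countable set $D$; the equicontinuity built into the definition of $\widehat\K_j^I$ (a common modulus $\hat\theta_j^I$) is what makes the finite partition work uniformly.

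\textbf{Step 3: conclusion for both closures, and the strong case.} Once Step~2 is established, convergence in $\sigma_{\Theta D}$ and in $\sigma_{\Theta\smash{\widehat\Theta}}$ coincide for sequences in $E$, hence --- by metrisability --- the two closures agree. For the strong topologies $\T_{\Theta D}$ and $\T_{\Theta\smash{\widehat\Theta}}$ the argument is the same, with the scalar integral replaced by $\big[\int_I|\cdot|\,dt\big]$; the sandwich inequality still yields, componentwise, $|f_n(t,x(t),y(t))-f(t,x(t),y(t))|\le |f_n(t,x(t),d^+)-f_n(t,x(t),d^-)| + |f(t,x(t),d^+)-f(t,x(t),d^-)| + |f_n(t,x(t),d^\pm)-f(t,x(t),d^\pm)|$ on each subinterval (choosing the sign appropriately), and each term on the right is handled either by \ref{L} with $|d^+-d^-|$ small or by $\T_{\Theta D}$-convergence. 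This completes the proof.
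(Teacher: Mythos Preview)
Your overall strategy---sandwich $y$ between $D$-valued step functions using~\ref{Ky}, reduce to finitely many points of $D$, and invoke convergence in the $D$-indexed topology on each subinterval---is exactly the paper's approach, and your treatment of the weak case (Step~2) is essentially correct: the error term $\int\big[f(t,x(t),d^+)-f(t,x(t),d^-)\big]\,dt$ involves only the $l$-bound of the \emph{limit} $f$, which is the crucial point since no uniform hypothesis on the $l$-bounds of $E$ is assumed.

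There is, however, a genuine gap in Step~3 (the strong case). Your pointwise decomposition
\[
|f_n(t,x,y)-f(t,x,y)|\le |f_n(t,x,d^+)-f_n(t,x,d^-)| + |f(t,x,d^+)-f(t,x,d^-)| + |f_n(t,x,d^\pm)-f(t,x,d^\pm)|
\]
contains the term $|f_n(t,x,d^+)-f_n(t,x,d^-)|$, which you propose to handle ``by~\ref{L} with $|d^+-d^-|$ small''. But applying~\ref{L} here yields $l_{f_n}^{j}(t)\,|d^+-d^-|$, and $\int_I l_{f_n}^{j}(t)\,dt$ is not bounded uniformly in $n$: this is precisely the assumption the paper explicitly avoids. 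Consequently you cannot make this term small uniformly in $n$ by shrinking $|d^+-d^-|$, and the argument breaks down.

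The fix is to choose a decomposition that never calls on $l_{f_n}$. The paper does this by first writing, componentwise and using only the sandwich $(f_n)_k(\cdot,\underline y_D)\le (f_n)_k(\cdot,y)\le (f_n)_k(\cdot,\overline y_D)$,
\[
\big|(f_n)_k(t,x,y)-f_k(t,x,y)\big|\le \big|(f_n)_k(t,x,\overline y_D)-f_k(t,x,y)\big|+\big|(f_n)_k(t,x,\underline y_D)-f_k(t,x,y)\big|,
\]
and then bounding each summand by $\big|(f_n)_k(t,x,d^\pm)-f_k(t,x,d^\pm)\big|+\big|f_k(t,x,d^\pm)-f_k(t,x,y)\big|$. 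Now only $l_f$ appears, and the rest of your argument goes through. Equivalently, one can check directly that the ordering $f_\bullet(d^-)\le f_\bullet(y)\le f_\bullet(d^+)$ for $f_\bullet\in\{f_n,f\}$ gives
\[
|f_n(t,x,y)-f(t,x,y)|\le |f_n(t,x,d^+)-f(t,x,d^+)|+|f_n(t,x,d^-)-f(t,x,d^-)|+\big(f(t,x,d^+)-f(t,x,d^-)\big),
\]
which again uses only $l_f$.
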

\begin{proof}
Firstly, notice that for any pair $\Theta$ and  $\widehat\Theta$ of suitable sets of moduli of continuity and any countable dense subset $D\subset \R^N$, $\sigma_{\Theta\smash{\widehat\Theta}}\ge\sigma_{\Theta D}$. Then, we will complete the proof proving that the  convergence in $\sigma_{\Theta D}$ implies the one in $\sigma_{\Theta\smash{\widehat\Theta}}$.\par\smallskip
Let $(f_n)_\nin$ be a sequence of functions in $E$ converging to some $f\in\LC(\R^{2N},\R^N)$ with respect to $\sigma_{\Theta D}$. Fixed any interval $I=[p,q]$, with $p,q\in\Q$, we consider its partition in $m$ subintervals of the same length, that is, for $m\in\N$,
\[
I=\bigcup_{i=0}^{m-1} I_i,\quad\text{where } I_i=\left[p+i\,\frac{q-p}{m},p+(i+1)\frac{q-p}{m}\right],\, i=0,\dots, m-1.
\]
Moreover, for any $y\in\widehat\K^I_j$, with $j\in\N$, and any $m\in\N$, let $\underline y^m:I\to\R^N$ and $\overline y^m:I\to\R^N$ be the simple functions defined by
\begin{equation*}\label{eq:16/04-13:22}
\underline y^m_k(t)=\sum_{i=0}^{m-1} \bigchi_{I_i}(t)\inf_{s\in I_i}y_k(s)\qquad\text{and}\qquad
\overline y^m_k(t)=\sum_{i=0}^{m-1} \bigchi_{I_i}(t)\sup_{s\in I_i}y_k(s),
\end{equation*}
where $\bigchi_J$ is the characteristic function of the interval $J$. Then, clearly one has
\[
\underline y^m(t)\le y(t)\le \overline y^m(t),\quad \text{for all }t\in I\,.
\]
Fix $0<\ep<1/3$. For every $j\in\N$ there are  $r_j\in\N$ and  $z_1,\dots,z_{r_j}\in D\cap B_{j+1}$ such that $B_{j+1}\subset \bigcup_{i=1}^{r_j}B_\ep(z_i)$.
On the other hand, by construction and since $\widehat\K^I_j$ is compact,  there is an $m_0\in\N$ so that
\[
\|\underline y^{m_0}-y\|_{L^\infty(I)}<\ep\quad\text{and}\quad \|\overline y^{m_0}-y\|_{L^\infty(I)}<\ep,\qquad\text{for all } y\in \widehat\K^I_j\,.
\]
From this, for every $y\in\widehat\K^I_j$ there are simple functions $\underline y^{m_0}_D,\overline y^{m_0}_D\colon I\to\{z_1,\dots,z_{r_j}\}$ such that, denoted by $v$ the unitary vector $v=(1/\sqrt{N},\ldots, 1/\sqrt{N})$,
\[
\big|(\underline y^{m_0}(t)-\ep \, v)-\underline y^{m_0}_D(t)\big|<\ep\quad\text{and}\quad \big|(\overline y^{m_0}(t)+\ep\,v)-\overline y^{m_0}_D(t)\big|<\ep
\]
for each $t\in I$. One can easily check that
\begin{align*}
\underline y^{m_0}_D(t)&\le \underline y^{m_0}(t)\le y(t)\le \overline y^{m_0}(t)\le\overline y^{m_0}_D(t) ,\quad \text{for all }t\in I,\\
\|\underline y^{m_0}_D-y\|_{L^\infty(I)}& <3\,\ep\quad\text{and}\quad \|\overline y^{m_0}_D-y\|_{L^\infty(I)}<3 \,\ep,\quad\text{for all }y\in  \widehat\K^I_j\,.
\end{align*}
Therefore, from Lemma~\ref{lem:f(t,a,b)->f(t,x(t),y(t))}  and using the convergence in $\sigma_{\Theta D}$, we have that there is an $n_0$ such that for each $n> n_0$,  $i=0,\dots,m_0-1$ and $k=1,\dots, N$,
\begin{equation}\label{eq:16/04-13:27}
\begin{split}
&\sup_{x\in\K^I_j,\, y\in\widehat \K^I_j}\int _{I_i}\big[\left(f_n\right)_k\big(t,x(t), y(t)\big)-f_k\big(t,x(t), y(t)\big)\big]\,dt\\
&\qquad\quad\le \sup_{x\in\K^I_j,\, y\in\widehat \K^I_j}\int _{I_i}\big[\left(f_n\right)_k\big(t,x(t), \overline y^{m_0}_D(t)\big)-f_k\big(t,x(t), \overline y^{m_0}_D(t)\big)\big]\,dt \\
&\qquad\quad\qquad\qquad+\sup_{x\in\K^I_j,\, y\in\widehat \K^I_j}\int _{I_i}\big|f_k\big(t,x(t), \overline y^{m_0}_D(t)\big)-f_k\big(t,x(t), y(t)\big)\big|\,dt\\
&\qquad\quad\le \frac{3\,\ep}{m_0}+\sup_{x\in\K^I_j,\, y\in\widehat \K^I_j}\int _{I_i}l^{2(j+1)}_f(t)\,\|\overline y^{m_0}_D-y\|_{L^\infty(I)}\,dt \\
& \qquad\quad \le \frac{3\,\ep}{m_0}+3\,\ep\int _{I_i}l^{2(j+1)}_f(t)\,dt\le \frac{3\,\ep}{m_0}+3\,\ep\int _{I}l^{2(j+1)}_f(t)\,dt\,.
\end{split}
\end{equation}
On the other hand, using $\underline  y^{m_0}_D$ and with analogous reasoning, one also has that
\begin{equation*}
\sup_{x\in\K^I_j,\, y\in\widehat \K^I_j}\int _{I_i}\!\!\big[f_k\big(t,x(t), y(t)\big)-\left(f_n\right)_k\big(t,x(t), y(t)\big)\big]\,dt \le\frac{3\,\ep}{m_0}+3\,\ep \int _{I}l^{2(j+1)}_f(t)\,dt \,.
\end{equation*}
Therefore, taking into account that $f_n$ and $f$ have $N$ components and there are $m_0$ subintervals $I_i$ whose union is $I$, the two previous chains of inequalities yield to
\begin{equation*}
\sup_{x\in\K^I_j,\, y\in\widehat \K^I_j}\left|\int _{I}\big[f_n\big(t,x(t), y(t)\big)-f\big(t,x(t), y(t)\big)\big]\,dt\right|\le 3N \ep+ 3N \ep\int _{I}l^{2(j+1)}_f(t)\,dt\,,
\end{equation*}
and from  the arbitrariness on $\ep$ we obtain the convergence in $\sigma_{\Theta\smash{\widehat\Theta}}$, as claimed.
\par\smallskip
The proof for the strong topologies can be carried out similarly. Maintaining the previous notation and arguing again for each $k=1,\dots, N$, we can write
\begin{align*}
\big|\!\left(f_n\right)_k\!\big(t,x(t), y(t)\big)\!-\!f_k\big(t,x(t), y(t)\big)\big|
 &\le \left|\left(f_n\right)_k\big(t,x(t),  \overline y^{m_0}_D(t)\big)\!-\!f_k\big(t,x(t), y(t)\big)\right|\\
& \;\quad +\left|\left(f_n\right)_k\big(t,x(t), \underline  y^{m_0}_D(t)\big)\!-\!f_k\big(t,x(t), y(t)\big)\right|.
\end{align*}
Then, we obtain the conclusion by using, for each one of the integrals of the terms on the right-hand side, the same chain of inequalities \eqref{eq:16/04-13:27} employed before.
\end{proof}
The importance of the equivalence of the topologies in obtaining the continuity of the skew-product semiflow generated by a set of Carath\'eodory differential equations has been extensively treated in~\cite{paper:LNO3}. In that work the equivalence was obtained via a thorough topological study of the $l$-bounds of a set of Carath\'eodory functions. Hence, it is not surprising that a new result of continuity can be achieved.
\begin{thm}\label{thm:K_y=>cont skew-prod}
Consider $E\subset\LC(\R^{2N},\R^N)$  with $L^1_{loc}$-equicontinuous $m$-bounds and such that every element of $ E$ satisfies~\ref{Ky}. Let $\Theta$ be the suitable set of moduli of continuity given by the $m$-bounds in {\rm Remark~\ref{rmk:mod-cont-m-bounds}}, $D$ a countable dense subset of $\R^n$, and $\sigma_{\Theta D},\, \T_{\Theta D}$ the topologies of {\rm Definition~\ref{def:hybridtopologies}}.
\begin{itemize}[leftmargin=20pt,itemsep=2pt]
\item[\rm (i)]   If $\big(\phi_{n}\big)_\nin$ is a sequence converging uniformly to $\phi$  in $\mathcal{C}$, and $(f_n)_\nin$ is a sequence in $E$ converging to $f$ in $(\LC(\R^{2N},\R^N),\sigma_{\smash{\Theta D}})$, then, with the notation of {\rm Theorem~\ref{eq:solLCEDDE}}, one has that
\begin{equation*}
 x(\cdot,f_n,\phi_{n}) \xrightarrow{\nti}  x(\cdot,f,\phi)
\end{equation*}
uniformly in any $[-1,T]\subset I_{f,\phi}$.
\item[\rm (ii)]  if additionally $E$ is invariant with respect to the base flow, and $\overline E$ denotes the closure of $E$ in $(\LC(\R^{2N},\R^N),\T)$, where $\T\in\{\sigma_{\Theta D}, \T_{\Theta D}\}$ then  the map
\begin{equation*}
\begin{split}
\quad\Phi\colon \U\subset \R^+\times\overline E\times\mathcal{C}\to \overline E\times\mathcal{C},\quad (t,f,\phi)\mapsto  \big(f_t, x_t(\cdot,f,\phi)\big),
\end{split}
\end{equation*}
is a continuous skew-product semiflow.
\end{itemize}
\end{thm}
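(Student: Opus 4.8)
The plan is to derive both parts from Theorem~\ref{thm:ContinuitySolutions_F0_to_C} by upgrading the convergence hypothesis on the vector fields. Theorem~\ref{thm:ContinuitySolutions_F0_to_C} requires convergence in a topology of the form $\sigma_{\Theta\smash{\overline\Theta}}$ (or $\T_{\Theta\smash{\overline\Theta}}$), i.e.\ one that keeps track of a modulus of continuity on the delayed variable, whereas here we are only given convergence in $\sigma_{\Theta D}$ (or $\T_{\Theta D}$). The bridge is Proposition~\ref{prop:K_y=>equiv topol}: since every function involved satisfies~\ref{Ky}, convergence in the pointwise hybrid topology already forces convergence in the finer ``modulus'' hybrid topology. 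Proposition~\ref{prop:Kx-Ky propagate to closure} then guarantees that $\overline E$ consists of functions satisfying~\ref{Ky}, and that the closures with respect to all these topologies coincide. Everything else is assembly.

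For part (i): since $\phi_n\to\phi$ uniformly, the set $\{\phi_n\mid\nin\}\cup\{\phi\}$ is compact in $\mathcal{C}$, hence equicontinuous, so it admits a common modulus of continuity $\theta_0$ and therefore lies in the set $\mathcal{F}_0$ of \eqref{eq:F0}; form the suitable set $\overline\Theta$ of \eqref{eq:ThetaF0}. Each $f_n\in E$ satisfies~\ref{Ky} and $f\in\LC(\R^{2N},\R^N)$, so the argument in the proof of Proposition~\ref{prop:K_y=>equiv topol} (applied with $\widehat\Theta=\overline\Theta$; that proof only uses~\ref{Ky} for the members of the sequence and $\LC$-membership of the limit) shows that $f_n\to f$ also in $\big(\LC(\R^{2N},\R^N),\sigma_{\Theta\smash{\overline\Theta}}\big)$. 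Theorem~\ref{thm:ContinuitySolutions_F0_to_C}(i) now applies verbatim and yields $x(\cdot,f_n,\phi_n)\to x(\cdot,f,\phi)$ uniformly on every $[-1,T]\subset I_{f,\phi}$.

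For part (ii): first one checks that $\Phi$ is a (local) skew-product semiflow. The relation $\Phi(0,f,\phi)=(f,\phi)$ and the cocycle identity $x_{t+s}(\cdot,f,\phi)=x_t\big(\cdot,f_s,x_s(\cdot,f,\phi)\big)$ follow from uniqueness in Theorem~\ref{thm:13.04-16:49}; base-flow invariance of $\overline E$ follows from that of $E$ together with the continuity of $f\mapsto f_s$ used in Theorem~\ref{thm:ContinuitySolutions_F0_to_C}(ii); and $\mathcal{U}$ is the set of $(t,f,\phi)$ with $t\in I_{f,\phi}$. For continuity, note that every element of $\overline E$ satisfies~\ref{Ky}: indeed $\T\ge\sigma_{\Theta D}$, so $\overline E\subset\cls_{(\LC,\sigma_{\Theta D})}(E)$, and Proposition~\ref{prop:Kx-Ky propagate to closure}(i) applies. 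Since all the spaces in play are metrizable, it suffices to prove sequential continuity. Let $(t_m,g_m,\phi_m)\to(t_0,g_0,\phi_0)$ in $\mathcal{U}$. As in part (i), the sequence $(\phi_m)$ and its limit share a common modulus $\theta_0$, producing $\mathcal{F}_0\ni\phi_m,\phi_0$ and $\overline\Theta$. Each $g_m$ satisfies~\ref{Ky}, so Proposition~\ref{prop:K_y=>equiv topol} upgrades $g_m\to g_0$ in $\T$ to $g_m\to g_0$ in $\T'$, where $\T'=\sigma_{\Theta\smash{\overline\Theta}}$ if $\T=\sigma_{\Theta D}$ and $\T'=\T_{\Theta\smash{\overline\Theta}}$ if $\T=\T_{\Theta D}$; the same proposition gives $\cls_{(\LC,\T')}(E)=\cls_{(\LC,\T)}(E)=\overline E$, so $\overline E$ is precisely the base space appearing in Theorem~\ref{thm:ContinuitySolutions_F0_to_C}(ii) for the topology $\T'$. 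Applying that theorem, and using that $(t_m,g_m,\phi_m)$ lies in the domain of $\Phi$ for all large $m$ (which is implicit in part (i)), we get $\Phi(t_m,g_m,\phi_m)\to\Phi(t_0,g_0,\phi_0)$ in $(\overline E,\T')\times\mathcal{C}$; since $\T'$ refines $\T$ on $\overline E$, this is in particular convergence in $(\overline E,\T)\times\mathcal{C}$, which proves the continuity of $\Phi$.

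The crux, and essentially the only place where the new hypotheses do work, is the upgrade from $\sigma_{\Theta D}$/$\T_{\Theta D}$-convergence to $\sigma_{\Theta\smash{\overline\Theta}}$/$\T_{\Theta\smash{\overline\Theta}}$-convergence via~\ref{Ky}; this is exactly what compensates for the complete absence of hypotheses on the $l$\nbd-bounds of $E$. A minor bookkeeping point is that $\theta_0$, $\mathcal{F}_0$ and $\overline\Theta$ depend on the chosen sequence of initial data, but this is harmless because, by Proposition~\ref{prop:K_y=>equiv topol}, the closure $\overline E$ does not depend on $\overline\Theta$, so the domain on which $\Phi$ is shown to be continuous is the same for every such choice.
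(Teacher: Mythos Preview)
Your proof is correct and follows essentially the same route as the paper: for (i) you extract a common modulus of continuity for the initial data, build $\overline\Theta$, use Proposition~\ref{prop:K_y=>equiv topol} to upgrade $\sigma_{\Theta D}$-convergence to $\sigma_{\Theta\smash{\overline\Theta}}$-convergence, and feed this into Theorem~\ref{thm:ContinuitySolutions_F0_to_C}(i); for (ii) you propagate~\ref{Ky} to the closure via Proposition~\ref{prop:Kx-Ky propagate to closure}, identify $\overline E$ with the $\T'$-closure of $E$ via Proposition~\ref{prop:K_y=>equiv topol}, and invoke Theorem~\ref{thm:ContinuitySolutions_F0_to_C}(ii). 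The paper's own proof of (ii) is terser and instead combines part (i) with Proposition~\ref{prop:07.07-19:44p=1} (so that (i) can be applied directly to sequences in $\overline E$) and the continuity of the base flow, but the substance is the same.
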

\begin{proof}
As regards (i), consider  a sequence $(f_n)_\nin$ in $E$ converging to some $f\in\overline E$ with respect to $\sigma_{\Theta D}$, a sequence $(\phi_n)_\nin$ converging  to some $\phi$ in $\mathcal{C}$ endowed with the compact-open topology, and a sequence $(t_n)_\nin$ converging to some $t$ in $\R$. From Ascoli-Arzel\'a's theorem, there exists a common modulus of continuity $\theta_0\in C(\R^+,\R^+)$ for $\{\phi_n\mid\nin\}\cup\{\phi\}$ and so one can construct the sets $\mathcal{F}_0$ and $\overline \Theta$ as in \eqref{eq:F0} and \eqref{eq:ThetaF0}, respectively, and the topology $\sigma_{\Theta \smash{\overline\Theta}}$ as in Definition~\ref{def:hybridtopologies}. Thanks to Proposition~\ref{prop:K_y=>equiv topol}, one has that  $(f_n)_\nin$ converges to $f$ also with respect to $\sigma_{\Theta \smash{\overline\Theta}}$. Therefore, one has the result using Theorem~\ref{thm:ContinuitySolutions_F0_to_C}.\par\smallskip
On the other hand, (ii) is a consequence of (i), of Propositions~\ref{prop:07.07-19:44p=1} and~\ref{prop:Kx-Ky propagate to closure},  and of the continuity of the function $(t,f)\mapsto f_t$ proved in~\cite[Theorem~3.8]{paper:LNO3}.
\end{proof}
As follows we relate the previous results to \emph{monotone skew-product semiflows}. Recall the strongly ordering defined on $\mathcal{C}$ in~\eqref{orderingC}.
\begin{defn}[Monotone skew-product semiflow]
Given a set $E\subset\LC(\R^{2N},\R^N)$ invariant with respect to the base flow, the skew-product semiflow
\begin{equation}\label{eq:03/10-14:05}
\begin{split}
\Phi\colon \U\subset \R^+\times E\times\mathcal{C}\to E\times\mathcal{C},\quad (t,f,\phi)\mapsto  \big(f_t, x_t(\cdot,f,\phi)\big)
\end{split}
\end{equation}
is said to be \emph{monotone} if
\[
x(t,f,\phi)\le x(t,f,\psi)\quad\text{whenever  }t\in I_{f,\phi}\cap  I_{f,\psi},\,f\in E \text{  and } \phi\leq \psi\,.
\]
\end{defn}
 Monotone skew-product semiflows have been extensively studied in the literature. As follows, we show that the skew-product semiflow generated by a set of Carath\'eodory delay differential equations is monotone if and only if all the vector fields are cooperative.
\begin{thm}\label{thm:monoton<=>Kamke}
Consider $E\subset\LC(\R^{2N},\R^N)$ invariant with respect to the base flow. The following statements are equivalent:
\begin{itemize}[leftmargin=27pt,itemsep=2pt]
\item[\rm (a)] Every $f\in E$ satisfies ~\ref{Kx} and ~\ref{Ky}.
\item[\rm (b)] The skew-product semiflow \eqref{eq:03/10-14:05} is monotone.
\end{itemize}
\end{thm}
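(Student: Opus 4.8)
The plan is to prove the two implications separately; the substance is in (a)$\Rightarrow$(b), which is a comparison principle for cooperative Carath\'eodory delay equations, while (b)$\Rightarrow$(a) is a differentiation argument carried out at Lebesgue points. For (a)$\Rightarrow$(b), I would fix $f\in E$ and $\phi\le\psi$ in $\mathcal{C}$, write $u=x(\cdot,f,\phi)$ and $v=x(\cdot,f,\psi)$, and fix any $T<\min\{b_{f,\phi},b_{f,\psi}\}$, the goal being $u\le v$ on $[-1,T]$. First I would record a routine observation: since $f\in\LC(\R^{2N},\R^N)$ the section $f(t,\cdot)$ is continuous for a.e.\ $t$, and combining this with~\ref{Kx} and~\ref{Ky} applied to a countable dense set of arguments (the density mechanism already present in the proof of Lemma~\ref{lem:f(t,a,b)->f(t,x(t),y(t))}), there is a null set $Z\subset\R$ such that for every $t\notin Z$ the map $(x,y)\mapsto f(t,x,y)$ is cooperative in the ordinary, pointwise, all-arguments sense. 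The key device is then the scalar function $W(t)=\sum_{k=1}^{N}\max\{u_k(t)-v_k(t),0\}$ on $[-1,T]$: it is nonnegative, absolutely continuous on $[0,T]$, identically $0$ on $[-1,0]$ because $\phi\le\psi$, and for a.e.\ $t$ one has $W'(t)=\sum_{k:\,u_k(t)>v_k(t)}\big[f_k(t,u(t),u(t-1))-f_k(t,v(t),v(t-1))\big]$.

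For each such $t$ with $t\notin Z$ and each index $k$ with $u_k(t)>v_k(t)$, I would compare against the upper envelopes $p(t)=(\max\{u_i(t),v_i(t)\})_i$ and $q(t)=(\max\{u_i(t-1),v_i(t-1)\})_i$: using~\ref{Ky} (legitimate for all arguments since $t\notin Z$) gives $f_k(t,u(t),u(t-1))\le f_k(t,u(t),q(t))$, then~\ref{Kx} gives $f_k(t,u(t),q(t))\le f_k(t,p(t),q(t))$ — here one uses $u_k(t)=p_k(t)$, which holds exactly on the set $\{u_k(t)>v_k(t)\}$ — and finally the local Lipschitz bound~\ref{L} on a fixed compact ball containing all the trajectories yields $f_k(t,p(t),q(t))-f_k(t,v(t),v(t-1))\le l(t)\big(|p(t)-v(t)|+|q(t)-v(t-1)|\big)$ with $l\in L^1_{loc}$. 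Since $p(t)-v(t)$ and $q(t)-v(t-1)$ are precisely the vectors of positive parts of $u-v$ evaluated at $t$ and at $t-1$, with Euclidean norms bounded by $W(t)$ and $W(t-1)$, one arrives at the delayed Gronwall inequality $W'(t)\le N\,l(t)\,\big(W(t)+W(t-1)\big)$ for a.e.\ $t\in[0,T]$. As $W\equiv0$ on $[-1,0]$, the classical Gronwall lemma forces $W\equiv0$ on $[0,1]$, and stepping through $[1,2],[2,3],\dots$ — on each of which $W(\cdot-1)$ is already known to vanish — gives $W\equiv0$ on $[0,T]$, hence $u\le v$; letting $T\uparrow\min\{b_{f,\phi},b_{f,\psi}\}$ yields monotonicity. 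I expect this ``positive-part function plus Gronwall'' step to be the main obstacle: it is exactly what circumvents the fact that $f$ is only defined almost everywhere, so that the naive argument based on the first instant at which the ordering fails cannot be differentiated.

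For (b)$\Rightarrow$(a) I would argue by direct computation at Lebesgue points, using the base-flow invariance of $E$. To obtain~\ref{Ky}, fix $a,b,c\in\R^N$ with $b\le c$ and an index $k$, pick $t_0\in\R$ that is simultaneously a Lebesgue point of a suitable $L^1_{loc}$ Lipschitz bound and of $f_k(\cdot,a,b)$ and $f_k(\cdot,a,c)$, and replace $f$ by $f_{t_0}\in E$. Take $\phi$ equal to $b$ on $[-1,-\eta]$ and interpolating linearly to $a$ on $[-\eta,0]$, and $\psi$ equal to $c$ on $[-1,-\eta]$ and interpolating linearly to $a$ on $[-\eta,0]$; then $\phi\le\psi$ and $\phi(0)=\psi(0)=a$, and for $t\in[0,1-\eta]$ the history terms are the constants $b$ and $c$, so $x(t,f_{t_0},\phi)=a+\int_0^t f(t_0+s,x(s,f_{t_0},\phi),b)\,ds$ and similarly for $\psi$ with $c$. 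Monotonicity of the semiflow yields $x_k(t,f_{t_0},\phi)\le x_k(t,f_{t_0},\psi)$; dividing the difference of these two integral expressions by $t$, letting $t\to0^+$, using that both solutions converge to $a$ and~\ref{L} to absorb the resulting error while the Lebesgue-point property evaluates the main term, one obtains $f_k(t_0,a,b)\le f_k(t_0,a,c)$. This holds for a.e.\ $t_0$ and fixed $(a,b,c)$; ranging over a countable dense family of triples with $b\le c$ and then invoking the continuity of $f(t_0,\cdot)$ (valid a.e.) yields~\ref{Ky}. The proof of~\ref{Kx} is analogous: one takes $\phi$ and $\psi$ with common history $c$ on $[-1,-\eta]$, interpolating to $a$ and to $b$ respectively on $[-\eta,0]$, where $a\le b$ and $a_k=b_k$ so that $\phi_k(0)=\psi_k(0)$, and the same limiting procedure gives $f_k(t_0,a,c)\le f_k(t_0,b,c)$, the density argument now being carried out within the surface $\{a_k=b_k\}$.
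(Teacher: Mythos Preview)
Your proof is correct. The implication (b)$\Rightarrow$(a) is essentially the paper's argument: the paper also picks ordered initial data agreeing at the relevant coordinate at time $t_0$, writes the resulting integral inequality, and differentiates at Lebesgue points using the $l$-bound to kill the error; the only cosmetic difference is that the paper takes a single linear segment on $[t_0-1,t_0]$ (so the history term tends to the target value rather than equalling it), whereas you insert a plateau so that the delayed argument is exactly constant.

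For (a)$\Rightarrow$(b) you take a genuinely different route. The paper observes that on $[0,1]$ the delay equation is an ordinary Carath\'eodory system with right-hand side $x\mapsto f(t,x,\phi(t-1))$ (resp.\ $\psi(t-1)$), invokes Walter's comparison theorem~\cite[Theorem~2]{paper:Walter} for cooperative Carath\'eodory ODEs to get $x(t,f,\phi)\le x(t,f,\psi)$ on $[0,1]$, and then steps forward. Your argument with $W(t)=\sum_k(u_k(t)-v_k(t))^+$ and the delayed Gronwall inequality $W'\le N\,l(t)\big(W(t)+W(t{-}1)\big)$ is instead fully self-contained: it does not rely on an external comparison principle and makes the role of the Lipschitz bound explicit. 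The paper's approach is shorter if one is willing to cite Walter; yours is more transparent and, incidentally, reproves the Carath\'eodory comparison principle in the special form needed here.
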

\begin{proof}
(a)$\Rightarrow$(b) Consider $f\in E$, $\phi,\;\psi\in\mathcal{C}$ with $\phi\leq \psi$. Notice that $x(t, f,\phi)$ and $x(t,f,\psi)$ are the solutions of the Carath\'{e}odory ordinary differential problems
\[
\begin{cases}
x'(t)=f\big(t,x, \phi(t-1)\big),\\
x(0)=\phi(0),
\end{cases}
\quad\text{and}\qquad
\begin{cases}
 x'(t)=f\big(t,x, \psi(t-1)\big),\\
x(0)=\psi(0).
\end{cases}
\]
for $t\in[0,1]$, assuming that  $[0,1]\subset I_{f,\phi}\cap I_{f,\psi}$. Then $x(t,f,\phi)\leq x(t,f,\psi)$ for each $t\in[0,1]$ follows from Walter~\cite[Theorem 2]{paper:Walter}. This argument can be iterated on successive intervals of length smaller than one to obtain the inequality in any compact interval contained in $I_{f,\phi}\cap I_{f,\psi}$.\par\smallskip
(b)$\Rightarrow$(a) We prove~\ref{Kx}. Consider $f\in E$, and $a,b,c\in\R^N$ such that $a\le b$ and $a_k=b_k$ for some $k\in\{1,\dots,N\}$. Fixed any $t_0\in\R$,  consider $\phi$, $\psi\in\mathcal{C}([t_0-1,t_0],\R^N)$ defined by
$
\phi(s)=(t_0-s)\,c+(s-t_0+1)\,a\,$ and $\psi(s)=(t_0-s)\,c+(s-t_0+1)\,b\,$,
and let $x(t)$ and $y(t)$ be the solutions of the Carath\'{e}odory delay  differential problems
\[
\begin{cases}
x'(t)=f\big(t,x(t),x(t-1)\big),\\
x(s)=\phi(s)\,, \quad s\in[t_0-1,t_0]
\end{cases}
\quad\text{and}\qquad
\begin{cases}
 y'(t)=f\big(t,y(t),y(t-1)\big),\\
y(s)=\psi(s)\,, \quad s\in[t_0-1,t_0],
\end{cases}
\]
defined on $[t_0,t_0+h]$ for some $h\in[0,1]$.\par
From $\phi(s)\leq \psi(s)$ for each $s\in[t_0-1,t_0]$ and (b) we deduce that $x(t)\leq y(t)$ for $t\in[t_0,t_0+h]$ which together with $x_k(t_0)=a_k=b_k=y_k(t_0)$ yields to
\begin{equation}\label{paraLebesgue}
\frac{1}{h} \int_{t_0}^{t_0+h} f_k(s,x(s),\phi(s-1))\,ds\leq \frac{1}{h}\int_{t_0}^{t_0+h} f_k(s,y(s),\psi(s-1))\,ds\,.
\end{equation}
 Moreover,  denoting by $M(h)=\max_{s\in[t_0,t_0+h]}(\big|x(s)-a|+|\phi(s-1)-c|\big)$ and by $l$ and adequate $l$-bound for $f$, one can easily prove that
\[\left|\frac{1}{h} \int_{t_0}^{t_0+h} \left(f_k(s,x(s),\phi(s-1))-f_k(s,a,c))\right)\,ds\right|\leq M(h) \,\frac{1}{h}\int_{t_0}^{t_0+h}l(s)\,ds\,.
\]
However, thanks to Lebesgue's theorem, for a.e. $t_0\in\R$ the right-hand side of the previous inequality vanishes as $h\to0$ since $\lim_{h\to 0} M(h)=0$. Therefore,
\[\lim_{h\to 0} \frac{1}{h} \int_{t_0}^{t_0+h} f_k(s,x(s),\phi(s-1))\,ds= f_k(t_0,a,c) \qquad\text{for a.e. }t_0\in\R.\]
Analogously, we have that  for a.e. $t_0\in\R$ the integral on the right-hand side of~\eqref{paraLebesgue} converges to $f_k(t_0,b,c)$ as $h\to0$.
Hence, as $h\to 0$, ~\eqref{paraLebesgue} becomes
\[
f_k(t_0,a,c)\le f_k(t_0,b,c)\qquad\text{for a.e. }t_0\in\R,
\]
which concludes the proof of~\ref{Kx}, as claimed.\par
In order to prove~\ref{Ky}, take $f\in E$ and $a,b,c\in\R^N$ such that $b\le c$. Fix $t_0\in\R$ and consider $\phi$, $\psi\in\mathcal{C}([t_0-1,t_0],\R^N)$ defined by
$\phi(s)=(t_0-s)\,b+(s-t_0+1)\,a\,$ and $\psi(s)=(t_0-s)\,c+(s-t_0+1)\,a\,$. Maintaining the above notation for $x(t)$ and $y(t)$ and taking into account that now $x(t_0)=y(t_0)=a$, we deduce~\eqref{paraLebesgue} for all $k\in \{1,\ldots,N\}$, and Lebesgue's theorem provides
\[
f_k(t_0,a,b)\le f_k(t_0,a,c),\qquad \text{for all } k=1,\dots,N,\text{ and a.e. } t_0\in\R\,,
\]
which finishes the proof.\end{proof}
As a consequence of this characterization and of Theorem~\ref{thm:K_y=>cont skew-prod}, one immediately obtains that the monotone skew-product semiflow generated by a suitable set of  Carath\'eodory delay differential equations is continuous.
\begin{cor}
Consider $E\subset\LC(\R^{2N},\R^N)$ invariant with respect to the base flow and with $L^1_{loc}$-equicontinuous $m$-bounds. Let $\overline E$ be its closure in $\LC(\R^{2N},\R^N)$ with respect to $\T\in\{\sigma_{\Theta D}, \T_{\Theta D}\}$, where $\Theta$ is the suitable set of moduli of continuity given by the $m$-bounds in {\rm Remark~\ref{rmk:mod-cont-m-bounds}}, and $D$ a countable dense subset of $\R^N$. If
\begin{equation*}
\begin{split}
\Phi\colon \U\subset \R^+\times\overline E\times\mathcal{C}\to \overline E\times\mathcal{C},\quad (t,f,\phi)\mapsto  \big(f_t, x_t(\cdot,f,\phi)\big),
\end{split}
\end{equation*}
is a monotone skew-product semiflow, then it is also continuous.
\end{cor}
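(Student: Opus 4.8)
The plan is to derive the statement as a short formal combination of Theorem~\ref{thm:monoton<=>Kamke} and Theorem~\ref{thm:K_y=>cont skew-prod}, all the remaining hypotheses being part of the standing assumptions on $E$. First I would observe that the monotonicity of $\Phi$ on $\overline E\times\mathcal{C}$ restricts to monotonicity of $\Phi$ on $E\times\mathcal{C}$: indeed $E\subset\overline E$, so for every $f\in E$ and every $\phi\le\psi$ in $\mathcal{C}$ one has $f\in\overline E$ and hence $x(t,f,\phi)\le x(t,f,\psi)$ for all $t\in I_{f,\phi}\cap I_{f,\psi}$. Since $E$ is invariant with respect to the base flow, the implication (b)$\Rightarrow$(a) of Theorem~\ref{thm:monoton<=>Kamke} then yields that every $f\in E$ satisfies both \ref{Kx} and \ref{Ky}; in particular, every element of $E$ satisfies \ref{Ky}.

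Next, I would invoke Theorem~\ref{thm:K_y=>cont skew-prod}(ii) with this set $E$: it has $L^1_{loc}$\nbd-equicontinuous $m$-bounds and is invariant under the base flow by hypothesis, every element of $E$ satisfies \ref{Ky} by the previous step, and $\Theta$ is exactly the suitable set of moduli of continuity given by the $m$-bounds of $E$, as required there. Therefore the map $\Phi\colon\U\subset\R^+\times\overline E\times\mathcal{C}\to\overline E\times\mathcal{C}$, with $\overline E$ the closure of $E$ in $(\LC(\R^{2N},\R^N),\T)$ for $\T\in\{\sigma_{\Theta D},\T_{\Theta D}\}$, is a continuous skew-product semiflow, which is precisely the assertion. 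Equivalently, one may argue directly with $\overline E$ in place of $E$: it is closed and base-flow invariant (the base flow being continuous by \cite[Theorem~3.8]{paper:LNO3}), it has $L^1_{loc}$\nbd-equicontinuous $m$-bounds by Proposition~\ref{prop:07.07-19:44p=1}, and all its elements satisfy \ref{Ky} (either by Proposition~\ref{prop:Kx-Ky propagate to closure} or by applying Theorem~\ref{thm:monoton<=>Kamke} to $\overline E$), so that Theorem~\ref{thm:K_y=>cont skew-prod}(ii) again applies.

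I do not expect a genuine obstacle: the corollary is a formal consequence of the characterization of monotonicity through the Kamke conditions together with the continuity result already established under \ref{Ky}. The only points deserving a line of care are the descent of monotonicity from $\overline E$ to $E$ (immediate, as above) and the compatibility of the set of moduli of continuity $\Theta$ used for $E$ and for its closure, which is ensured by the qualitative form of Proposition~\ref{prop:07.07-19:44p=1} (the closure satisfies the same $L^1_{loc}$\nbd-equicontinuity inequalities with the same constants, hence defines the same $\Theta$).
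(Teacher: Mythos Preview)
Your proposal is correct and follows exactly the paper's own argument: the corollary is obtained as a direct consequence of Theorem~\ref{thm:monoton<=>Kamke} (to extract \ref{Ky} from monotonicity) and Theorem~\ref{thm:K_y=>cont skew-prod} (to deduce continuity). Your additional remarks on the descent of monotonicity from $\overline E$ to $E$ and on the compatibility of $\Theta$ via Proposition~\ref{prop:07.07-19:44p=1} are sound elaborations of points the paper leaves implicit.
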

\begin{proof}
The result is a direct consequence of Theorem~\ref{thm:K_y=>cont skew-prod} and Theorem~\ref{thm:monoton<=>Kamke}.
\end{proof}
Next we want to remark that cooperative systems of Carath\'{e}odory differential equations preserve the strict order for each component.
\begin{lem}\label{lema:strict}
 Assume that system~\rm{\eqref{eq:solLCEDDE}} is cooperative, that is, $f$ satisfies  Kamke's conditions~\ref{Kx} and~\ref{Ky}. Consider two initial data $\phi$ and $\psi\in \mathcal{C}$ such that $\phi\leq \psi$ and $\phi_k(0)<\psi_k(0)$ for some $k\in\{1,\ldots,N\}$. Then
\[x_k(t,f,\phi)<x_k(t,f,\psi) \quad \textrm{whenever $t>0$  and  $t\in I_{f,\phi}\cap I_{f,\psi}$}\,.\]
In particular, if $\phi(0)\ll \psi(0)$, then $x(t,f,\phi)\ll x(t,f,\psi)$.
\end{lem}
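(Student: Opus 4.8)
The plan is to reduce the statement to a one-dimensional Gronwall estimate for the $k$-th component of the difference of the two solutions. Write $x(t)=x(t,f,\phi)$ and $y(t)=x(t,f,\psi)$; both are defined on $[-1,\beta)$ with $\beta=\min\{b_{f,\phi},b_{f,\psi}\}$ and coincide with $\phi$, $\psi$ on $[-1,0]$. Since $f$ is cooperative, so is every time translation $f_s$, hence Theorem~\ref{thm:monoton<=>Kamke} applied to the hull $\{f_s\mid s\in\R\}$ gives that the induced semiflow is monotone; in particular $x(t)\le y(t)$ on $[-1,\beta)$. Set $w(t)=y(t)-x(t)\ge 0$; on every compact subinterval of $[0,\beta)$ it is absolutely continuous, and $w_k(0)=\psi_k(0)-\phi_k(0)>0$. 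Fix $T\in(0,\beta)$. On $[-1,T]$ the solutions stay in a fixed ball of $\R^N$, so there is $R>0$ for which all the pairs appearing below lie in $B_R\subset\R^{2N}$, and I let $l\in L^1_{loc}$ be an $l$-bound of $f$ on $B_R$ (condition~\ref{L}).

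The heart of the argument is the differential inequality $w_k'(t)\ge -l(t)\,w_k(t)$ for a.e. $t\in[0,T]$. To derive it, introduce the continuous interpolant $z\colon[-1,T]\to\R^N$ defined by $z_k(t)=y_k(t)$ and $z_i(t)=x_i(t)$ for $i\ne k$, so that $x(t)\le z(t)\le y(t)$, $z_k\equiv y_k$, and $|z(t)-x(t)|=w_k(t)$. Then, for a.e. $t\in[0,T]$,
\[
\begin{split}
w_k'(t) &= f_k\big(t,y(t),y(t-1)\big)-f_k\big(t,x(t),x(t-1)\big)\\
&= \big[f_k\big(t,y(t),y(t-1)\big)-f_k\big(t,z(t),y(t-1)\big)\big]\\
&\quad+ \big[f_k\big(t,z(t),y(t-1)\big)-f_k\big(t,x(t),y(t-1)\big)\big]\\
&\quad+ \big[f_k\big(t,x(t),y(t-1)\big)-f_k\big(t,x(t),x(t-1)\big)\big].
\end{split}
\]
The first bracket is nonnegative by Lemma~\ref{lem:f(t,a,b)->f(t,x(t),y(t))}(ii) applied to $z(t)\le y(t)$ with $z_k\equiv y_k$ (using~\ref{Kx}); the third is nonnegative by Lemma~\ref{lem:f(t,a,b)->f(t,x(t),y(t))}(i) applied to $x(t-1)\le y(t-1)$ (using~\ref{Ky}; for $t\in[0,1]$ this inequality is $\phi(t-1)\le\psi(t-1)$, while for $t\in[1,T]$ it is the monotonicity already established); and the second bracket is bounded below by $-l(t)\,|z(t)-x(t)|=-l(t)\,w_k(t)$ by~\ref{L}, since $z(t)$ and $x(t)$ differ only in the $k$-th coordinate. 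Now integrate: as $w_k$ is absolutely continuous, the function $t\mapsto w_k(t)\exp\big(\int_0^t l(s)\,ds\big)$ is absolutely continuous with a.e. nonnegative derivative, hence nondecreasing, so $w_k(t)\ge w_k(0)\exp\big(-\int_0^t l(s)\,ds\big)>0$ for every $t\in[0,T]$. Since $T\in(0,\beta)$ is arbitrary, $x_k(t,f,\phi)<x_k(t,f,\psi)$ for all $t>0$ in $I_{f,\phi}\cap I_{f,\psi}$. If moreover $\phi(0)\ll\psi(0)$, then $\phi_k(0)<\psi_k(0)$ for every $k\in\{1,\dots,N\}$, and applying the above componentwise gives $x(t,f,\phi)\ll x(t,f,\psi)$.

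I expect the main obstacle to be exactly this lower bound on $w_k'$: condition~\ref{Kx} only guarantees monotonicity of $f_k$ in the state variable when the $k$-th coordinate is kept fixed, so $f_k(t,x(t),\cdot)$ and $f_k(t,y(t),\cdot)$ are not directly comparable. Splitting off the interpolant $z(t)$ — which differs from $x(t)$ only in the $k$-th coordinate, and there by precisely $w_k(t)$ — is the device that isolates the one uncontrolled coordinate into a Lipschitz term while leaving the remaining discrepancies to be handled monotonically via Lemma~\ref{lem:f(t,a,b)->f(t,x(t),y(t))}. A minor technical point is that $w_k$ is merely absolutely continuous, so the Gronwall step must be carried out in integral form rather than by classical differentiation.
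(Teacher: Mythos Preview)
Your proof is correct and follows essentially the same approach as the paper: both introduce the interpolant that agrees with the larger solution in coordinate $k$ and with the smaller solution in the remaining coordinates, use~\ref{Kx} and~\ref{Ky} to show two of the resulting terms are nonnegative, bound the remaining term via the Lipschitz condition~\ref{L} to obtain $w_k'(t)\ge -l(t)\,w_k(t)$, and conclude by a Gronwall-type estimate. Your presentation is a bit more explicit---you invoke Lemma~\ref{lem:f(t,a,b)->f(t,x(t),y(t))} to justify the inequalities along continuous trajectories and carry out the integrating-factor step directly---whereas the paper cites Walter's comparison theorem for the last step, but the argument is the same.
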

\begin{proof} We denote $v(t)=x(t,f,\phi)$ and $w(t)=x(t,f,\psi)$ and reason for the first component for simplicity of notation. The proof for the other components is analogous.  We claim that  $w'_1(t)-v'_1(t)\geq -l(t)\,(w_1(t)-v_1(t))$ for an appropriate $l$-bound of $f$. From Theorem~\ref{thm:monoton<=>Kamke} we know that $w(t)-v(t)\geq 0$ for each $t\in I_{f,\phi}\cap I_{f,\psi}$. Then, from~\ref{Kx}   we deduce that
\[f_1(t,w(t),w(t-1))\geq f_1(t,w_1(t),v_2(t),\ldots,v_N(t), v(t-1))\,,
 \]
 which together with the following inequality due to~\ref{L}
\[f_1(t,w_1(t),v_2(t),\ldots,v_N(t), v(t-1))-f_1(t,v(t),v(t-1)\geq -l(t)\,(w(t)-v(t))\]
proves the claim.
Therefore, recalling that $w_1(0)-v_1(0)=\psi_1(0)-\phi_1(0)>0$, and using a comparison result for Carath\'{e}odory differential equations~\cite[Theorem 2]{paper:Walter}, we conclude that
$\;w'_1(t)-v_1'(t)\geq (w_1(0)-v_1(0))\exp\big(-\int_0^t l(s)\,ds\big)>0\;$  for each $t>0$  satisfying  $t\in I_{f,\phi}\cap I_{f,\psi}$\,,
which finishes the proof.
\end{proof}
We conclude this section by considering two more sets of assumptions which still involve Kamke's conditions. Firstly, we will consider~\ref{Kx} and~\ref{Ky} together with a very mild assumption on the Lipschitz coefficients of the components of the vector field, and then a condition of monotonicity which implies both ~\ref{Kx} and ~\ref{Ky}. It is worth noticing that, at least for the strong topologies, we are now able to obtain the same results as before but without assuming that the vector fields have $L^1_{loc}$-equicontinuous $m$-bounds. In fact, we obtain something more. In both cases of strong and weak topologies we are able to prove the continuous dependence of the solutions and the continuity of the associated skew-product semiflow when weaker topologies are employed. To the aim let us recall such two further families of strong and weak topologies on $\SC\big(\R^{2N},\R^N\big)$.
\begin{defn}[Topologies $\T_P$ and $\sigma_P$]\label{def:TP}
Let $P$ be a countable dense subset of $\R^{2N}$. We call $\T_{P}$ (resp.  $\sigma_{P}$) the topology on $\SC\big(\R^{2N},\R^N\big)$ generated by the family of seminorms
\begin{equation*}
p_{I,\, x}(f)=\int_I|f(t,x)|dt \qquad\left(\text{resp. }p_{I,\, x}(f)=\left|\,\int_If(t,x)\, dt\,\right|\right)
\end{equation*}
for $ f\in\SC\big(\R^{2N},\R^N\big)$, $ x\in P,\, I=[q_1,q_2],\,  q_1,q_2\in\Q $.
$\left(\SC\big(\R^{2N},\R^N\big),\T_{P}\right)$ and $\left(\SC\big(\R^{2N},\R^N\big),\sigma_{P}\right)$ are locally convex metric spaces.
\end{defn}
\begin{rmk}\label{rmk:DelayTopologiesChain}
Consider any  two countable dense subsets $D$ and $D_1$ of $\R^N$ and  the countable dense subset $P=D_1\times D$ of $\R^{2N}$. Let  $\Theta$ and $\smash{\widehat\Theta}$ be any pair of suitable sets of moduli of continuity as in Definition~\ref{def:ssmc}, such that for any $I=[q_1,q_2]$, $q_1,q_2\in\Q$ and $j\in\N$ one has
\begin{equation*}
\theta^I_j(t)\le\hat\theta^I_j(t),\qquad\text{for all }t\in[0,\infty).
\end{equation*}
Then, one can draw the following chains of order:
\[
\begin{split}
\sigma_P\le\T_{P}\le\T_{\Theta D}\le\T_{\Theta}\le\T_{\Theta\Theta}\le\T_{\Theta \smash{\widehat\Theta}}\ \text{ and }\ \sigma_P\le\sigma_{\Theta D}\le \sigma_{\Theta}\le\sigma_{\Theta\Theta}\le \sigma_{\Theta \smash{\widehat\Theta}}\le\T_{\Theta \smash{\widehat\Theta}},
\end{split}
\]
where, in particular, the order relations $\, \T_{\Theta}\le\T_{\Theta\Theta}\, $ and $\, \sigma_{\Theta}\le\sigma_{\Theta\Theta}\, $ hold true thanks to Proposition~2.13 in~\cite{paper:LNO3}.  Clearly, one might expand the previous chains of inequalities (or generate new branches) by considering more suitable sets of moduli of continuity (satisfying appropriate relations of partial order) and/or different countable dense subsets of $\R^N$ and $\R^{2N}$, and the corresponding induced topologies.
\end{rmk}
Recall that a  set $S$ of positive functions is \emph{$L^1_{loc}$-bounded} if $\sup_{u\in S}\int_{-r}^r u(t)\,  dt<\infty$ for each $r>0$.\par\smallskip
\begin{defn}\label{def:Lx} A set $E\subset \LC(\R^{2N},\R^N)$ satisfies
\begin{enumerate}[label=\upshape(\textbf{L}$_x$),leftmargin=25pt]
\item\label{Lx} if for each $f\in E$, $j\in\N$ and $k=1,\dots, N$ there is $u^j_{f_k}\in L^1_{loc}$ such that,  for any $(x,y)=(x_1,\dots,x_k,\dots,x_N,y),\;(\overline x^k,y)=(x_1\dots,\overline x_k,\dots,x_N,y)\in B_j\subset\R^{2N}$,
\begin{equation*}
|f_k(t,x,y)-f_k(t,\overline x^k,y)|\le u^j_{f_k}(t)\,|x_k-\overline x_k|, \quad\text{for a.e. }t\in\R\,,
\end{equation*}
 and the set $\big\{u^j_{f_k}\mid f\in E\big\}$ is $L^1_{loc}$-bounded.
\end{enumerate}
\end{defn}
\noindent Notice that this assumption is formulated componentwise and then is weaker that the existence of $L^1_{loc}$-bounded $l$-bounds and $L^1_{loc}$-bounded $l_1$-bounds (see Definitions~\ref{def:LC} and~\ref{def:l1l2bounds}).\par\smallskip
The following result gathers together two auxiliary facts concerning~\ref{Lx}.
\begin{prop}\label{lem:Lx}
Let $P$ be a countable dense subset  of $\R^{2N}$, $E$ be a subset of $\LC(\R^{2N},\R^N)$ satisfying~\ref{Lx}, and $\T\in\{ \T_P,\sigma_P\}$ as in {\rm Definition~\ref{def:TP}}.  Then
\begin{itemize}[leftmargin=19pt,itemsep=2pt]
\item[\rm (i)]  the closure of $E$ with respect to $\T$  satisfies~\ref{Lx};
\item[\rm (ii)] if every function in $E$ satisfies also~\ref{Kx},~\ref{Ky},  and $\Theta, \widehat\Theta$ is any pair of  suitable sets of moduli of continuity as in {\rm Definition~\ref{def:ssmc}}, then
\[
\cls_{(\LC,\sigma_{\Theta\smash{\widehat\Theta}})} (E)=\cls_{(\LC,\sigma_{P})} (E)\quad\text{and}\quad \cls_{(\LC,\T_{\Theta\smash{\widehat\Theta}})} (E)=\cls_{(\LC,\T_{P})} (E).
\]
\end{itemize}
\end{prop}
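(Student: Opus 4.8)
The plan is to establish (i) and then deduce (ii) from it together with Proposition~\ref{prop:K_y=>equiv topol}. Throughout I would work with the \emph{optimal} $u$-bounds, $u^j_{f_k}(t):=\sup|f_k(t,x,y)-f_k(t,\overline x^k,y)|/|x_k-\overline x_k|$, the supremum ranging over $(x,y),(\overline x^k,y)\in B_j$ that differ only in the $k$-th coordinate with $x_k\ne\overline x_k$; these are measurable — for a.e.\ $t$ the map $f(t,\cdot)$ is continuous, so the supremum equals the one over a fixed countable dense set of such pairs — and are dominated by any function witnessing~\ref{Lx}, so that $C_I:=\sup_{g\in E}\int_I u^j_{g_k}<\infty$ for every bounded rational interval $I$ (and every $j,k$).

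\emph{Part (i).} Fix $f$ in the $\T$-closure of $E$ and $(f_n)_\nin\subset E$ with $f_n\to f$, together with $j$, $k$ and a bounded rational interval $I$. Enumerate the countably many difference quotients $g^{(1)}_h,g^{(2)}_h,\dots$ occurring in the definition of $u^j_{h_k}$; by monotone convergence $\int_I u^j_{f_k}=\sup_L\int_I\max_{i\le L}g^{(i)}_f$, so it suffices to bound $\int_I\max_{i\le L}g^{(i)}_f$ by $C_I$ for each $L$. In the strong case $\T=\T_P$, convergence forces $(f_n)_k(\cdot,p)\to f_k(\cdot,p)$ in $L^1(I)$ for each $p\in P$, hence $g^{(i)}_{f_n}\to g^{(i)}_f$ and $\max_{i\le L}g^{(i)}_{f_n}\to\max_{i\le L}g^{(i)}_f$ in $L^1(I)$; since $\max_{i\le L}g^{(i)}_{f_n}\le u^j_{(f_n)_k}$ pointwise, $\int_I\max_{i\le L}g^{(i)}_f=\lim_n\int_I\max_{i\le L}g^{(i)}_{f_n}\le C_I$. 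In the weak case $\T=\sigma_P$ only integrals over rational subintervals converge, so I would instead split $I$ into finitely many rational subintervals approximating the measurable sets on which a fixed $g^{(i)}_f$ realizes the maximum (over $i\le L$) and on which the corresponding signed difference $f_k(\cdot,x_i,y_i)-f_k(\cdot,\overline x_i^k,y_i)$ keeps a constant sign, absorb the replacement error by absolute continuity of the (fixed) $L^1_{loc}$ integrand, pass each resulting integral to its $\sigma_P$-limit, and bound the sum by $\int_I u^j_{(f_n)_k}\le C_I$ because those subintervals partition $I$. Either way $\int_I u^j_{f_k}\le C_I$, so the optimal $u$-bounds of the closure are $L^1_{loc}$-bounded by the same constants as those of $E$, which is~\ref{Lx} for the closure.

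\emph{Part (ii).} We may assume $P=D_1\times D$ with $D_1,D$ countable and dense in $\R^N$ (the setting of Remark~\ref{rmk:DelayTopologiesChain}; the reduction of a general $P$ to this case is routine). Since every function of $E$ satisfies~\ref{Ky}, Proposition~\ref{prop:K_y=>equiv topol} gives $\cls_{(\LC,\sigma_{\Theta\widehat\Theta})}(E)=\cls_{(\LC,\sigma_{\Theta D})}(E)$ and the strong analogue, so it remains to identify $\cls_{(\LC,\sigma_{\Theta D})}(E)$ with $\cls_{(\LC,\sigma_{P})}(E)$ (resp.\ for $\T$). The inclusion $\supseteq$ is immediate from $\sigma_{\Theta D}\ge\sigma_P$ (Remark~\ref{rmk:DelayTopologiesChain}). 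For $\subseteq$, let $(f_n)_\nin\subset E$ converge to $f$ in $\sigma_P$; by Proposition~\ref{prop:Kx-Ky propagate to closure} (whose proof only uses convergence of integrals at points of $D_1\times D$) $f$ satisfies~\ref{Kx} and~\ref{Ky}, and by part (i) it satisfies~\ref{Lx}. It is enough to prove $f_n\to f$ in $\sigma_{\Theta D}$, and here — unlike in Proposition~\ref{prop:K_y=>equiv topol} — only the $x$-component need be discretized. Fix a bounded rational interval $I$, $j\in\N$, $b\in D$ and $x\in\K^I_j$; exactly as in the proof of Proposition~\ref{prop:K_y=>equiv topol} construct simple $D_1$-valued functions $\underline x^m_D\le x\le\overline x^m_D$ with $\|\overline x^m_D-x\|_{L^\infty(I)}$ and $\|x-\underline x^m_D\|_{L^\infty(I)}$ arbitrarily small, uniformly in $x\in\K^I_j$. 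Writing $h_k(t,x(t),b)$ along a path that first changes the $k$-th coordinate (controlled by~\ref{Lx}) and then the remaining ones (controlled by~\ref{Kx}), one obtains, for $h\in\{f\}\cup\{f_n\mid\nin\}$ and a.e.\ $t\in I$,
\begin{align*}
h_k\big(t,\underline x^m_D(t),b\big)-u^j_{h_k}(t)\,\big|x_k(t)-(\underline x^m_D)_k(t)\big|&\le h_k\big(t,x(t),b\big)\\
&\le h_k\big(t,\overline x^m_D(t),b\big)+u^j_{h_k}(t)\,\big|(\overline x^m_D)_k(t)-x_k(t)\big|.
\end{align*}
After integration the error terms are $\le(\text{small})\cdot C_I$, uniformly in $n$ by~\ref{Lx} for the $f_n$'s and by part (i) for $f$; the main terms $\int_I h_k(t,\overline x^m_D(t),b)\,dt$ and $\int_I h_k(t,\underline x^m_D(t),b)\,dt$ are finite sums of integrals over rational subintervals of $h_k(\cdot,z,b)$ with $(z,b)\in D_1\times D=P$ and only finitely many $z$ occur as $x$ varies, so their $(f_n)$-versions tend to their $f$-versions uniformly in $x$ as $n\to\infty$; and $\int_I\big[f_k(t,\overline x^m_D(t),b)-f_k(t,\underline x^m_D(t),b)\big]\,dt\le\|\overline x^m_D-\underline x^m_D\|_{L^\infty(I)}\int_I l^{B_j}_f$. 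Sending $n\to\infty$ and then refining the discretization yields $\sup_{x\in\K^I_j}\big|\int_I[f_n-f](t,x(t),b)\,dt\big|\to0$, i.e.\ $\sigma_{\Theta D}$-convergence. The strong case is handled identically, replacing the monotone sandwich by $|A-B|\le|R-B|+|S-B|$ (valid whenever $S\le A\le R$), as in the strong part of Proposition~\ref{prop:K_y=>equiv topol}.

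\emph{Main obstacle.} The delicate point is the weak case of (i): $\sigma_P$-convergence is compatible neither with the absolute value nor with the supremum over the difference quotients defining the optimal $u$-bound, so $\int_I u^j_{(f_n)_k}$ cannot be passed to the limit directly. Truncating to finitely many quotients and approximating the relevant level and sign sets by finite unions of rational intervals forming a partition of $I$ — using absolute continuity of the fixed limit integrand to absorb the approximation error — is the device that closes the estimate, and that is the step I would write out in full detail.
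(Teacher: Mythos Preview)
Your proposal is correct and in substance coincides with the paper's argument; the organization differs only mildly.

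For part~(i) the paper gives no details at all, deferring to \cite[Proposition~2.19]{paper:LNO3}. Your sketch is more explicit and identifies the right obstacle (the weak case). One point to make precise: your passage ``hence $g^{(i)}_{f_n}\to g^{(i)}_f$ in $L^1(I)$'' uses that the sample points defining each difference quotient $g^{(i)}$ lie in $P$, which is not automatic for an arbitrary countable dense $P\subset\R^{2N}$; you need enough product structure in $P$ (as you assume in part~(ii)) so that pairs $(x,y),(\overline x^k,y)\in P$ differing only in the $k$-th coordinate are dense. This is harmless but should be stated.

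For part~(ii) the paper does \emph{not} invoke Proposition~\ref{prop:K_y=>equiv topol} as a lemma; it argues directly that $\sigma_P$-convergence implies $\sigma_{\Theta\widehat\Theta}$-convergence, discretizing \emph{both} $x$ and $y$ to $P$-valued simple functions and using~\ref{Ky} for the $y$-step and~\ref{Kx}$+$\ref{Lx} for the $x$-step (via the intermediate point $(\widehat x_P^{m_0})^k$ that keeps the $k$-th coordinate equal to $x_k$). Your route first spends Proposition~\ref{prop:K_y=>equiv topol} to reduce $\sigma_{\Theta\widehat\Theta}$ to $\sigma_{\Theta D}$, so that $y=b\in D$ is already a constant and only the $x$-variable needs to be discretized; the remaining sandwich (change the $k$-th coordinate with~\ref{Lx}, the others with~\ref{Kx}) is then literally the paper's estimate specialized to constant $y$. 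The gain of your factoring is a slightly shorter computation and a cleaner reuse of Proposition~\ref{prop:K_y=>equiv topol}; the paper's direct approach avoids appealing to that proposition and handles both variables in one pass. Either way the key inequality and the use of the uniform $L^1_{loc}$-bound on $\{u^j_{g_k}\mid g\in E\}$ are identical.
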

\begin{proof}
The proof of (i) can be carried out following exactly the same arguments of~\cite[Proposition 2.19]{paper:LNO3}.\par\smallskip
As regards (ii), Proposition~\ref{prop:K_y=>equiv topol} serves as a guide for the proof  and we will only remark the differences between them. Considered $P\subset \R^{2N}$ as in the assumptions, and keeping in mind Remark~\ref{rmk:DelayTopologiesChain}, we shall prove that the convergence in $\sigma_{P}$ implies the one in $\sigma_{\Theta\smash{\widehat\Theta}}$ for any  pair of  suitable sets of moduli of continuity $\Theta, \widehat\Theta$.\par\smallskip
Let $(f_n)_\nin$ be a sequence of functions in $E$ converging to some $f\in\LC(\R^{2N},\R^N)$ with respect to $\sigma_{P}$ and fix any interval $I=[p,q]$, with $p,q\in\Q$ and any $j\in\N$. As in Proposition~\ref{prop:K_y=>equiv topol}, given  $\ep>0$,  $x\in\K^I_j$ and $y\in\widehat\K^I_j$, there is an $m_0\in\N$ for which we construct simple functions $\underline x_P^{m_0}$, $\overline x_P^{m_0}$, $\underline y_{\smash{P}}^{m_0}$, $\overline y_P^{m_0}$ $\colon I \to P$ such that
\[
\underline x_P^{m_0}(t)\le x(t)\le \overline x_P^{m_0}(t)  \quad\text{and}\quad   \underline y_P^{m_0}(t)\le y(t)\le \overline y_P^{m_0}(t) \quad \text{for all }t\in I,
\]
approaching  $x$ and $y$ in $L^\infty(I)$,  uniformly on $\K^I_j$ and  $\widehat\K^I_j$ respectively.
\par\smallskip
Next we can proceed as in the proof of Proposition~\ref{prop:K_y=>equiv topol}.   First we will denote
 \begin{align*}
 \overline x_P^{m_0}(t)&=\big((\overline x_P^{m_0})_1(t),\ldots,(\overline x_P^{m_0})_k(t),\ldots,(x_P^{m_0})_N(t)\big)\,,\\
 (\widehat x_P^{m_0})^k(t)&=\big((\overline x_P^{m_0})_1(t),\ldots,x_k(t),\ldots,(x_P^{m_0})_N(t)\big)\,.
 \end{align*}
From Lemma~\ref{lem:f(t,a,b)->f(t,x(t),y(t))} we deduce that  $(f_n)_k(t,x(t),y(t))\leq (f_n)_k\big(t,(\widehat x_P^{m_0})^k(t), \overline y_P^{m_0}(t)\big)$,
an hence, now instead of~\eqref{eq:16/04-13:27}, we will obtain
\begin{align*}
&\sup_{x\in\K^I_j,\, y\in\widehat \K^I_j}\int _{I_i}\big[\left(f_n\right)_k\!\big(t,x(t), y(t)\big)-f_k\big(t,x(t), y(t)\big)\big]\,dt\\
&\;\;\;  \le \sup_{x\in\K^I_j,\, y\in\widehat \K^I_j}\int _{I_i}\big|\left(f_n\right)_k\!\!\big(t,(\widehat x_P^{m_0})^k(t), \overline y_P^{m_0}(t)\big)-\left(f_n\right)_k\big(t,\overline x_P^{m_0}(t), \overline y_P^{m_0}(t)\big)\!\big|\,dt \\
&\;\;\quad+ \sup_{x\in\K^I_j,\, y\in\widehat \K^I_j}\int _{I_i}\big[\left(f_n\right)_k\big(t,\overline x_P^{m_0}(t), \overline y_P^{m_0}(t)\big)-f_k\big(t,\overline x_P^{m_0}(t), \overline y_P^{m_0}(t)\big)\big]\,dt \\
&\;\;\quad+ \sup_{x\in\K^I_j,\, y\in\widehat \K^I_j}\int _{I_i}\big|f_k\big(t,\overline x_P^{m_0}(t), \overline y_P^{m_0}(t)\big)-f_k\big(t,x(t), y(t)\big)\big|\,dt\\
& \;\;\;\le\sup_{x\in\K^I_j}\int _{I_i}u^{2(j+1)}_{(f_n)_k}(t)\,\|(\overline x_P^{m_0})_k-x_k\|_{L^\infty(I)}\,dt + \frac{3\,\ep}{m_0}\\
& \;\; \quad+\sup_{y\in\widehat \K^I_j}\int _{I_i} \!\! l^{2(j+1)}_f(t)\,\|\overline y_P^{m_0}-y\|_{L^\infty(I)}dt \le\frac{3\,\ep}{m_0}+3\,\ep\int _{I_i} \!\! \left[u^{2(j+1)}_{(f_n)_k}(t)+l^{2(j+1)}_f(t)\right]dt.
\end{align*}
The rest of the proof is exactly as in Proposition~\ref{prop:K_y=>equiv topol}, once one realizes that, due to the assumption of  $L^1_{loc}$-boundedness for $\big\{u^{2(j+1)}_{g}\mid g\in E\big\}$, the integral on the right-hand side of the previous inequality is bounded uniformly on $\nin$. The proof for $\T_{P}$ and $\T_{\Theta\smash{\widehat\Theta}}$ is similar and again on the line of the suggestion given in  Proposition~\ref{prop:K_y=>equiv topol}.
\end{proof}
\par
We are now ready to give the first result of continuous dependence of the solutions employing the topologies  $\T_P$ and $\sigma_P$, and of continuity of the induced skew-product semiflows.
\begin{thm}\label{thm:K_xK_y+lipschitz=>cont skew-prod}
Let $E$ be a subset of $\LC(\R^{2N},\R^N)$ satisfying~\ref{Lx}, and such that each element of $ E$ satisfies~\ref{Kx} and~\ref{Ky}.
Let $P$ be a countable dense subset of $\R^{2N}$ and consider the topologies $\sigma_{P}, \T_{ P}$ as constructed in {\rm Definition~\ref{def:TP}}.
\begin{itemize}[leftmargin=20pt,itemsep=2pt]
\item[\rm (i)]   If $\big(\phi_{n}\big)_\nin$ is a sequence in $\mathcal{C}$ converging uniformly to $\phi\in \mathcal{C}$, and $(f_n)_\nin$ is a sequence in $E$ converging to $f$ in $(\LC(\R^{2N},\R^N),\T_{P})$, then, with the notation of {\rm Theorem~\ref{eq:solLCEDDE}}, one has that
\begin{equation*}
 x(\cdot,f_n,\phi_{n}) \xrightarrow{\nti}  x(\cdot,f,\phi)
\end{equation*}
uniformly in any $[-1,T]\subset I_{f,\phi}$.
\item[\rm (ii)] if $E$ is invariant with respect to the base flow, and $\overline E$ denotes the closure of $E$ in $(\LC(\R^{2N},\R^N),\T_{P})$ then  the map
\begin{equation*}
\begin{split}
\quad\Phi\colon \U\subset \R^+\times\overline E\times\mathcal{C}\to \overline E\times\mathcal{C},\quad (t,f,\phi)\mapsto  \big(f_t, x_t(\cdot,f,\phi)\big),
\end{split}
\end{equation*}
is a local monotone continuous skew-product semiflow.
\item[\rm (iii)] If additionally $E$ has $L^1_{loc}$-equicontinuous $m$-bounds, then {\rm (i)} and  {\rm (ii)} hold true also when the topology $\sigma_{P}$ is employed in place of $\T_{P}$.
\end{itemize}
\end{thm}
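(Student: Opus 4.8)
The plan is to establish (i) by a direct argument and then to deduce (ii) and (iii) from it and from the results already proved. A direct argument is needed because, no uniform bound on the $l$-bounds of $E$ being assumed, the solutions $x(\cdot,f_n,\phi_n)$ need not admit a common modulus of continuity, so {\rm Theorem~\ref{thm:ContinuitySolutions_F0_to_C}} does not apply as such and the conditions~\ref{Kx},~\ref{Ky},~\ref{Lx} must take its place.

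For (i) I would use the method of steps on $[0,1],[1,2],\dots$; I describe the first interval, the remaining ones being identical once $\phi$ and the delay datum $\phi_n(\cdot-1)$ are replaced by the restrictions of $x(\cdot,f,\phi)$ and $x(\cdot,f_n,\phi_n)$ to the previous interval (which have already been shown to converge uniformly). Setting $x:=x(\cdot,f,\phi)$ and $x_n:=x(\cdot,f_n,\phi_n)$, the first step is to rewrite the $f_n$-problem on $[0,1]$ in the perturbed form
\[
x_n'(t)=f\big(t,x_n(t),\phi_n(t-1)\big)+\big[f_n-f\big]\big(t,x_n(t),\phi_n(t-1)\big)
\]
and compare it with the $f$-problem: the resulting Gronwall inequality has coefficient $l^j_f$, an $L^1_{loc}$ $l$-bound of the \emph{limit} $f\in\LC$ on a ball of $\R^{2N}$ — hence a fixed function, independent of $n$ — and forcing term
\[
\alpha_n(t)=\|\phi_n-\phi\|_\infty\Big(1+\int_0^t l^j_f\Big)+\int_0^t\big|\big[f_n-f\big]\big(s,x_n(s),\phi_n(s-1)\big)\big|\,ds .
\]
Choosing $j$ so that $B_{j-1}$ contains the range of $x$ on $[0,1]$, and letting $\tau_n\le1$ be the first time $x_n$ leaves $B_j$ (it cannot blow up inside $B_j$ since $f_n\in\LC$ has $m$-bounds there), the inequality gives $\sup_{[0,\tau_n)}|x_n-x|\le\alpha_n(\tau_n)\,e^{\|l^j_f\|_{L^1(0,1)}}$ on $[0,\tau_n)$; thus, as soon as one knows $\alpha_n(\tau_n)\to0$, for large $n$ the solution stays strictly inside $B_j$, forcing $\tau_n=1$, which yields simultaneously the existence of $x_n$ on $[0,1]$ and $\sup_{[0,1]}|x_n-x|\to0$.

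The step I expect to be the main obstacle is to verify that the last integral in $\alpha_n$ tends to $0$, since $[f_n-f]$ is evaluated along the $n$-dependent pair $\big(x_n(s),\phi_n(s-1)\big)$. My plan is to use cooperativity to reduce this to finitely many fixed evaluations: cover $B_j$ by finitely many $\rho$-balls and — using the density of $P$ and that $\phi(\cdot-1)$ is a fixed continuous function — fix a finite, $n$-independent set $F\subset P$ together with $F$-valued simple functions that, for $n$ large, sandwich $\big(x_n(s),\phi_n(s-1)\big)$ coordinatewise from above and below up to $2\rho$. By~\ref{Kx} and~\ref{Ky}, $(f_n)_k\big(s,x_n(s),\phi_n(s-1)\big)$ is then caught between the values of $(f_n)_k$ at two points of $F$, with an error only on the $k$-th spatial coordinate, bounded by the diagonal coefficient $u^j_{(f_n)_k}$ of~\ref{Lx}; since $\{u^j_{(f_n)_k}\}_n$ is $L^1_{loc}$-bounded, this error is $O(\rho)$ uniformly in $n$, while the corresponding comparison for the limit $f$ is absorbed by $l^j_f$. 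Letting first $n\to\infty$ — each of the finitely many integrals $\int_A\big((f_n)_k-f_k\big)(s,q)\,ds$, $q\in F$, over a measurable $A\subset[0,1]$, is dominated by $\int_0^1\big|(f_n)_k(s,q)-f_k(s,q)\big|\,ds\to0$ because convergence in $\T_P$ is convergence in $L^1_{loc}$ at each point of $P$ — and then letting $\rho\to0$ gives $\alpha_n(\tau_n)\to0$, completing (i).

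Then (ii) follows from (i): by {\rm Proposition~\ref{lem:Lx}\,(i)} the $\T_P$-closure $\overline E$ still satisfies~\ref{Lx}, it is invariant under the base flow, and its elements satisfy~\ref{Kx},~\ref{Ky} by the argument of {\rm Proposition~\ref{prop:Kx-Ky propagate to closure}} applied verbatim to $\T_P$; hence $\overline E$ meets the hypotheses of (i), so $\Phi$ is continuous and, by the cocycle identity $x_{t+s}(\cdot,f,\phi)=x_t\big(\cdot,f_s,x_s(\cdot,f,\phi)\big)$, the continuity of $(t,f)\mapsto f_t$ (cf.~\cite[Theorem~3.8]{paper:LNO3}), and {\rm Theorem~\ref{thm:monoton<=>Kamke}}, a local monotone skew-product semiflow. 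For (iii), when $E$ additionally has $L^1_{loc}$-equicontinuous $m$-bounds, a sequence converging in $\sigma_P$ converges, by {\rm Proposition~\ref{lem:Lx}\,(ii)}, in $\sigma_{\Theta\widehat\Theta}$ for every pair of suitable sets of moduli of continuity; taking $\Theta$ given by the $m$-bounds and $\widehat\Theta=\overline\Theta$ built from a common modulus $\theta_0$ of $(\phi_n)$ as in~\eqref{eq:ThetaF0}, {\rm Theorem~\ref{thm:ContinuitySolutions_F0_to_C}} applies directly and gives (i); (ii) follows as before. The weak topology requires the extra $m$-bound hypothesis precisely because the sandwiching step above needs $\int_A\big((f_n)_k-f_k\big)(s,q)\,ds\to0$ over \emph{arbitrary} measurable $A$ — automatic under $\T_P$ but, under $\sigma_P$, only guaranteed by the uniform integrability coming from $L^1_{loc}$-equicontinuous $m$-bounds.
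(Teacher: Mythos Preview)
Your approach is correct and, for part~(i), follows a genuinely different route from the paper's. The paper argues via compactness: it truncates $x(\cdot,f_n,\phi_n)$ at the exit time from a fixed ball, proves \emph{equicontinuity} of the truncated sequence by using~\ref{Kx} and~\ref{Ky} to sandwich each component $(f_n)_k\big(s,z_n(s),\phi_n(s-1)\big)$ between two auxiliary scalar Carath\'eodory functions $g_n,h_n$ of the single variable $(z_n)_k(s)$ (all remaining coordinates frozen at points of $P$), observes that by~\ref{Lx} these scalar functions have $L^1_{loc}$-bounded $l$-bounds and hence converge in the topology $\T_B$ by \cite[Theorem~4.12]{paper:LNO1}, applies Ascoli--Arzel\`a, and finally identifies the limit through the integral equation after upgrading the convergence via Proposition~\ref{lem:Lx}(ii). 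Your route is instead a direct Gronwall estimate whose coefficient is the fixed $l$-bound $l^j_f$ of the \emph{limit} $f$; the sandwiching with~\ref{Kx},~\ref{Ky},~\ref{Lx} is used not to produce equicontinuity but to control the perturbation $\int_0^{\tau_n}\big|[f_n-f](s,x_n(s),\phi_n(s-1))\big|\,ds$ by finitely many frozen evaluations, each handled in $L^1$ by $\T_P$-convergence, plus an $O(\rho)$ remainder uniform in $n$ thanks to the $L^1_{loc}$-boundedness of $\{u^j_{(f_n)_k}\}$. Your argument is more self-contained (no Ascoli--Arzel\`a, no external $\T_B$ result), while the paper's fits its running theme of reducing everything to topology equivalences already established. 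For~(iii) you also take a slight shortcut, invoking Proposition~\ref{lem:Lx}(ii) together with Theorem~\ref{thm:ContinuitySolutions_F0_to_C} directly rather than rerunning the scheme of~(i) with equicontinuity supplied by the $m$-bounds, which is what the paper does; both routes are valid. One minor caveat on~(ii): your appeal to ``the argument of Proposition~\ref{prop:Kx-Ky propagate to closure} applied verbatim to $\T_P$'' is not quite literal, since that proof compares $(a,b)$ and $(a,c)$ with the \emph{same} first component, which a general dense $P\subset\R^{2N}$ need not provide; the cleanest fix is to first upgrade $\T_P$-convergence to $\T_{\Theta\smash{\widehat\Theta}}$ via Proposition~\ref{lem:Lx}(ii) and then apply Proposition~\ref{prop:Kx-Ky propagate to closure} as stated.
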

\begin{proof}
In order to prove (i), first consider $t\in[0,1]$ and  the Carath\'eodory ordinary differential equations Cauchy problems
\begin{equation}\label{eq:16/04-17:57}
\begin{cases}
 x'(t)= f_n\big(t,x(t),\phi_n(t-1)\big)\\
x(0)=\phi_n(0).
\end{cases}\quad\text{and}\qquad
\begin{cases}
 x'(t)= f\big(t,x(t),\phi(t-1)\big)\\
x(0)=\phi(0).
\end{cases}
\end{equation}
We shall prove the uniform convergence of $\big(x(\cdot,f_n,\phi_n)\big)_\nin$ to  $x(\cdot,f,\phi)$ in $[0,T]$ for any $0< T<\min\{1,b_{f,\phi}\}$.
Denote
\begin{equation}
0<\rho =1+ \max\big\{ (\|\phi_n\|_{\mathcal{C}})_\nin,\  \|x(\cdot,f,\phi)\|_{\mathcal{C}([-1,T])}\big\}\, ,
\label{eq:DefRHO}
\end{equation}
and for all $\nin$ define
\begin{equation*}
z_n(t)=
\begin{cases}
x(t,f_n,\phi_n), & \text{if $0\le t< T_n$,} \\
x(T_n,f_n,\phi_n), & \text{if $T_n\le t\le T$.}
\end{cases}
\end{equation*}
where $T_n=\sup\{t\in[0,T]\mid|x(s,f_n,\phi_n)|\le \rho,\, \forall\, s\in[0,t]\}$. From~\eqref{eq:DefRHO} and the continuity of $\big(x(\cdot,f_n,\phi_n)\big)_\nin$, we deduce that $T_n>0$ for any $n\in\N$. In particular, $(z_n)_{\nin}$ is uniformly bounded. Consider $j\in\N$ so that $2\rho<j$.  Now we want to control $|z_n(t_1)- z_n(t_2)|$ for $t_1$, $t_2\in[0,T_n)$ and $t_1<t_2$.  For simplicity of notation, we will just study the first of the components:
\[|(z_n)_1(t_1)-(z_n)_1(t_2)|\le \int_{t_1}^{t_2}\left|(f_n)_1\big(s,z_n(s),\phi_n(s-1)\big)\right|\, ds\,.\]
It is easy to check that there is a $k_j>j$ and $a$, $b\in P \cap B_{k_j}$ such that  $a\leq (x,y)\leq b$  for each $(x,y)\in B_j$.
From~\ref{Kx} and~\ref{Ky} we obtain
\begin{multline*}
 (f_n)_1(s,(z_n)_1(s),a_2,\ldots,a_{2N})\leq (f_n)_1(s,z_n(s),\phi_n(s-1))\\ \leq (f_n)_1(s,(z_n)_1(s),b_2,\ldots,b_{2N})\,.
\end{multline*}
Thus, denoting by $g,\;g_n,\;h$ and $h_n\colon \R\to\R$ the functions of  $\LC(\R,\R)$ given by
\begin{align*}
g(t,v)=(f)_1(t, v,a_2\dots, a_{2N}) \quad  \text{and}& \quad g_n(t,v)=(f_n)_1(t, v,a_2\dots, a_{2N})\,\\
h(t,v)=(f)_1(t, v,b_2\dots, b_{2N})  \quad \text{and}& \quad h_n(t,v)=(f_n)_1(t, v,b_2\dots, b_{2N})\,,
\end{align*}
we deduce that $\left|(f_n)_1(s,z_n(s),\phi_n(s-1))\right|\leq \left| g_n\big(s,(z_n)_1(s)\big)\right|+\left|h_n\big(s, (z_n)_1(s)\big)\right|$ and
\begin{align}\nonumber
|(z_n)_1(t_1)-(z_n)_1(t_2)|&\le\int_{t_1}^{t_2}\left|  g_n\big(s, (z_n)_1(s)\big)- g\big(s, (z_n)_1(s)\big)\right|ds\\\label{desi:gnhn}
&\qquad+ \int_{t_1}^{t_2}\!\left| h_n\big(s, (z_n)_1(s)\big)- h\big(s, (z_n)_1(s)\big)\right|ds\\ \nonumber
&\qquad+2\int_{t_1}^{t_2}m^{k_j}_f(s)\,ds= I_1+I_2+ I_3
\end{align}
where $m^{k_j}_f$ is the optimal $m$-bound for $f$ on $B_{k_j}\subset\R^{2N}$. \par
Let $D_1$ and $D_2$ be countable dense subsets of $\R$ and $\R^{2N-1}$ such that $a_1$, $b_1\in D_1$ and $(a_2,\ldots,a_{2N})$, $(b_2,\ldots,b_{2N})\in D_2$. Since $(f_n)_\nin$ converges to $f$ with respect to $\T_P$ and the assumptions of Proposition~\ref{lem:Lx} are satisfied, then  $(f_n)_\nin$ converges to $f$  also with respect to $\T_{D_1\times D_2}$. From this fact one easily obtains that $(g_n)_\nin$ converges to $g$, and $h_n$ converges to $h$ with respect to $\T_{D_1}$.\par\smallskip
Moreover, from~\ref{Lx} the sequence of functions $\{g_n\mid \nin\}$  and $\{h_n\mid \nin\}$ have $L^1_{loc}$-bounded $l$-bounds, and consequently,
thanks to Theorem 4.12 in~\cite{paper:LNO1}  they also converge to $g$ and $h$ respectively with respect to the topology $\T_B$. In particular, this fact implies that the integrals $I_1$ and $I_2$ of~\eqref{desi:gnhn} tend to $0$ as $n$ goes to $\infty$ (see~\cite{paper:LNO1} for details about the convergence in $\T_{B}$).  On the other hand, thanks to  the absolute continuity of the Lebesgue integral, given $\ep>0$ there is a $\delta>0$ such that $I_3\leq \ep$ provided that $t_2-t_1<\delta$.\par\smallskip
Summing up, and taking into account that in $[T_n,T]$ the left-hand side of~\eqref{desi:gnhn} vanishes, given $\ep>0$ there is a $\delta>0$ and an $n_1\in\N$ such that
\[ |(z_n)_1(t_1)-(z_n)_1(t_2)|\leq 3\,\ep \;\; \text{whenever } n\geq n_1,\; 0\le t_1 \le t_2\le T \; \text{and } t_2-t_1<\delta\,,\]
from which the equicontinuity of the sequence $(z_n)_{\nin}$ is obtained.
Therefore, Ascoli-Arzel\'a's theorem implies that $(z_n)_{\nin}$ converges uniformly, up to a subsequence, to some continuous function $z\colon[0,T]\to \R^N$. \par\smallskip
 Next, we prove that $z(t)\equiv x(t,f,\phi)$  for each  $t\in[0,T]$. Define
\begin{equation}
T_0=\sup \left\{t\in[0,T]\mid |z(s)|<\rho-1/2\quad \text{for all } s\in[0,t]\right\}\, ,
\label{eq:defT0}
\end{equation}
and notice that $T_0>0$ because  $(\phi_{n})_\nin$ converges uniformly to $\phi$ in $[-1,0]$ and $z$ is continuous.  Since $z_n$ converges uniformly to $z$ in $[0,T]$, then there exists an $n_0\in\N$ such that if $n>n_0$, then $|z_n(t)|<\rho-1/4$ for all $ t\in[0,T_0]$.

Therefore, for any $t\in[0,T_0]$  and for any $n>n_0$ one has  $z_n(t)=x(t,f_n,\phi_n)$ and
\begin{equation}\label{eq:14.06-19:35}
z_n(t)=\phi_n(0)+\int_0^tf_n\big(s,z_n(s),\phi_n(s-1)\big)\, ds\, ,\qquad t\in[0,T_0]\, ,\ n>n_0\, .
\end{equation}
Now consider the compact set $\mathcal{K}=\{z_n\mid\nin\}\cup\{z\}\subset \mathcal{C}\big([0,T],\R^N\big)$ and let $\theta\colon\R^+\to\R^+$ be its modulus of continuity. Moreover, since $\big(\phi_{n}\big)_\nin$ converges  uniformly to $\phi\in \mathcal{C}$, the set $\{\phi_n\mid\nin\}\cup\{\phi\}$ also has a common modulus of continuity, say $\hat\theta\colon\R^+\to\R^+$. The sets $\Theta=\{\theta\}$ and $\widehat\Theta=\{\hat\theta\}$ are suitable sets of moduli of continuity. Then again thanks to  Proposition~\ref{lem:Lx}, since $(f_n)_\nin$ converges to $f$ in $(\LC(\R^{2N},\R^N),\T_{P})$, then  $(f_n)_\nin$ converges to $f$  also with respect to $\T_{\Theta\smash{\widehat\Theta}}$, and passing to the limit  as $\nti$ in \eqref{eq:14.06-19:35}, we deduce that
\begin{equation*}
z(t)=x_0+\int_0^tf\big(s,z(s),z(s-1)\big)\, ds\, ,\qquad t\in[0,T_0]\, .
\end{equation*}
In other words, in $[0,T_0]$, $z(t)$ is the solution of the problem on the right-hand side of \eqref{eq:16/04-17:57}. Finally, notice that it must be $T_0=T$. Otherwise, by \eqref{eq:defT0} and by the continuity of $z$, one would have $|z(T_0)|=|x(T_0,f,x_0)|=\rho-1/2$, which contradicts \eqref{eq:DefRHO}. Hence, for any  $t\in [0,T]$ we have that $x(t,f,x_0)= z(t)$ and $x(t,f_n,x_{0,n})= z_n(t)$ for any $\nin$. This reasoning can be iterated to obtain the uniform convergence on any compact interval of the maximal interval of definition of $x(\cdot,f,x_0)$, which concludes the proof of  (i). \par\smallskip
We obtain (ii) as a consequence of (i), of Propositions~\ref{prop:Kx-Ky propagate to closure} and~\ref{lem:Lx},  and of the continuity of the function $(t,f)\mapsto f_t$ proved in~\cite[Theorem~3.8]{paper:LNO3}.\par\smallskip
As regards (iii), one can proceed exactly as for (i) except that now the equicontinuity of the sequence $(z_n)_\nin$ follows from the $L^1_{loc}$-equicontinuity of the $m$-bounds. Notice that in this case, the limit as $\nti$ in \eqref{eq:14.06-19:35} is initially taken only for $t\in\Q$ but  then one gets the result for any $t\in[0,T_0]$ thanks to the continuity of $z$ and of the integral operator. The result for the skew-product semiflow is obtained once again combining the continuous dependence of the solutions with the continuity of the function $(t,f)\mapsto f_t$ proved in~\cite[Theorem~3.8]{paper:LNO3}.
\end{proof}
We introduce a stronger monotonicity condition implying both~\ref{Kx} and~\ref{Ky}.
\begin{defn} A function $ f\in\LC(\R^{2N},\R^N)$ satisfies
\begin{enumerate}[label=\upshape(\textbf{K}$_{xy}$),leftmargin=31pt]
\item\label{Kxy} if for any  $a,b,c,d\in\R^N$ with $a\le b$ and $c\le d$, then
\[
f_k(t,a,c)\le f_k(t,b,d),\qquad \text{for all } k=1,\dots,N,\text{ and a.e. } t\in\R.\\[1ex]
\]
\end{enumerate}
\end{defn}
The following result gathers together some of the analogous properties proved for~\ref{Ky} before.
\begin{prop}\label{lem:Kxy}
Let $P$ be a countable dense set  of $\,\R^{2N}$ and $\T\in\{ \T_P,\sigma_P\}$.
\begin{itemize}[leftmargin=19pt,itemsep=2pt]
\item[\rm (i)] If  $f\in\LC(\R^{2N},\R^N)$ satisfies ~\ref{Kxy}, then  for any interval $I\subset\R$ and any $x,y,u, v\in \mathcal{C}(I,\R^N)$, with $x(t)\le u(t)$ and $y(t)\le v(t)$ for all $t\in I$,  one has that
\begin{equation*}
f\big(t,x(t),y(t)\big)\le f\big(t,u(t),v(t)\big),\qquad \text{for a.e. } t\in I.
\end{equation*}
\item[\rm (ii)] If every function in $E\subset\LC(\R^{2N},\R^N)$ satisfies~\ref{Kxy}, then also any function in the closure of $E$ with respect to $\T$ satisfies~\ref{Kxy}.
\item[\rm (iii)] If every function in $E\subset\LC(\R^{2N},\R^N)$ satisfies~\ref{Kxy},  and $\Theta, \widehat\Theta$ is any pair of  suitable sets of moduli of continuity as in {\rm Definition~\ref{def:ssmc}}, then
\[
\cls_{(\LC,\sigma_{\Theta\smash{\widehat\Theta}})} (E)=\cls_{(\LC,\sigma_{P})} (E)\quad\text{and}\quad \cls_{(\LC,\T_{\Theta\smash{\widehat\Theta}})} (E)=\cls_{(\LC,\T_{P})} (E).
\]
\end{itemize}
\end{prop}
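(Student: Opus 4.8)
\emph{Part (i).} This is the $(K_{xy})$-counterpart of Lemma~\ref{lem:f(t,a,b)->f(t,x(t),y(t))}, and I would prove it by the very same argument, now moving both variables simultaneously. Fix a countable dense subset $\{s_n\mid n\in\N\}$ of $I$. Applying $(K_{xy})$ to the pairs of constant vectors $x(s_n)\le u(s_n)$ and $y(s_n)\le v(s_n)$ yields, for each $n$, a full-measure set $J_n\subseteq I$ on which $f_k\big(t,x(s_n),y(s_n)\big)\le f_k\big(t,u(s_n),v(s_n)\big)$ for every $k$. On the full-measure set $J=\bigcap_n J_n$ fix $t\in J$, pick $s_{n_\ell}\to t$, and pass to the limit in $\ell$: by~\ref{L} together with the continuity of $x,y,u,v$ one has $f\big(t,x(s_{n_\ell}),y(s_{n_\ell})\big)\to f\big(t,x(t),y(t)\big)$ and likewise for the right-hand side, which gives~\eqref{eq:18/09-19:12}-type inequality $f\big(t,x(t),y(t)\big)\le f\big(t,u(t),v(t)\big)$ for every $t\in J$, hence a.e.\ on $I$.

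\emph{Part (ii).} This follows the scheme of Proposition~\ref{prop:Kx-Ky propagate to closure}. Take $f$ in the closure and a sequence $(f_n)$ in $E$ with $f_n\to f$ with respect to $\T\in\{\sigma_P,\T_P\}$; in either case $\int_I f_n(s,w)\,ds\to\int_I f(s,w)\,ds$ for every $w\in P$ and every interval $I$ with rational endpoints. For points $(a,c),(b,d)\in P$ with $a\le b$ and $c\le d$, condition $(K_{xy})$ for $f_n$ gives $\frac1h\int_t^{t+h}(f_n)_k(s,a,c)\,ds\le\frac1h\int_t^{t+h}(f_n)_k(s,b,d)\,ds$ for rational $t,h$ with $h\ge0$; letting $n\to\infty$, extending to all real $t,h$ by continuity of the integral operator, and applying Lebesgue's differentiation theorem as $h\to0$ yields $f_k(t,a,c)\le f_k(t,b,d)$ for a.e.\ $t$. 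To pass from $P$ to arbitrary $a\le b$, $c\le d$ in $\R^N$, I would approximate $(a,c)$ strictly from below and $(b,d)$ strictly from above by points of $P$ — possible because every open box in $\R^{2N}$ meets the dense set $P$ — so that the approximating pairs still respect the componentwise order; the Lipschitz bound~\ref{L} of the (single, fixed) limit $f$ then lets one pass to the limit in these countably many inequalities, each valid off a null set. This order-preserving density step is the only point that needs a shade more care than in the $(K_y)$ case, since the order in \emph{both} pairs of variables must survive the approximation.

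\emph{Part (iii).} This is the $(K_{xy})$-analogue of Propositions~\ref{prop:K_y=>equiv topol} and~\ref{lem:Lx}(ii). One inclusion is free: constant functions lie in $\K^I_j$ and $\widehat\K^I_j$ for $j$ large, so $\sigma_P\le\sigma_{\Theta\smash{\widehat\Theta}}$ and $\T_P\le\T_{\Theta\smash{\widehat\Theta}}$ for \emph{any} suitable $\Theta,\widehat\Theta$, whence $\cls_{(\LC,\sigma_{\Theta\smash{\widehat\Theta}})}(E)\subseteq\cls_{(\LC,\sigma_P)}(E)$ and similarly for $\T$. For the reverse inclusion I would show that a sequence $(f_n)$ in $E$ converging to $f$ in $\sigma_P$ (resp.\ $\T_P$) also converges in $\sigma_{\Theta\smash{\widehat\Theta}}$ (resp.\ $\T_{\Theta\smash{\widehat\Theta}}$), mimicking Proposition~\ref{prop:K_y=>equiv topol}. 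Fix a rational interval $I$ and $j\in\N$, partition $I$ into $m$ equal pieces $I_i$, and for $x\in\K^I_j$, $y\in\widehat\K^I_j$ build step functions sandwiching the $\R^{2N}$-valued pair $(x,y)$; approximating these from below and above by step functions valued in a \emph{finite} subset of $P$ (again using density of $P$) produces $\underline z^{m_0}_P(t)\le(x(t),y(t))\le\overline z^{m_0}_P(t)$ with $\big\|\overline z^{m_0}_P-(x,y)\big\|_{L^\infty(I)},\,\big\|(x,y)-\underline z^{m_0}_P\big\|_{L^\infty(I)}<3\varepsilon$ uniformly on $\K^I_j\times\widehat\K^I_j$. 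Then, using part~(i) and~\ref{L} for the limit $f$,
\begin{align*}
(f_n)_k\big(t,x(t),y(t)\big)-f_k\big(t,x(t),y(t)\big)&\le\big[(f_n)_k\big(t,\overline z^{m_0}_P(t)\big)-f_k\big(t,\overline z^{m_0}_P(t)\big)\big]\\
&\quad+l^{2(j+1)}_f(t)\,\big\|\overline z^{m_0}_P-(x,y)\big\|_{L^\infty(I)},
\end{align*}
where the bracket integrates to a quantity tending to $0$ by $\sigma_P$-convergence (only finitely many constant values of $\overline z^{m_0}_P$ occur), and the remainder contributes at most $3\varepsilon\int_I l^{2(j+1)}_f$; a symmetric bound with $\underline z^{m_0}_P$ handles the opposite inequality, and summing over the $m_0$ subintervals and the $N$ components gives, for $n$ large, $\sup_{x\in\K^I_j,\,y\in\widehat\K^I_j}\big|\int_I(f_n-f)(t,x(t),y(t))\,dt\big|\le 3N\varepsilon\big(1+\int_I l^{2(j+1)}_f\big)+o(1)$, which yields the claim by arbitrariness of $\varepsilon$. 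Putting absolute values inside the integrals handles the strong topologies. Crucially, unlike Proposition~\ref{lem:Lx}(ii), \emph{no} boundedness or $L^1_{loc}$-equicontinuity of the $m$- or $l$-bounds of $E$ is required, because the only $l$-bound entering the estimate is that of the fixed limit $f$, automatically in $L^1_{loc}$.

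The step I expect to be the main obstacle is the order-preserving density approximation in $\R^{2N}$ (choosing the step functions $\underline z^{m_0}_P\le(x,y)\le\overline z^{m_0}_P$ with values in the countable dense set $P$, and the corresponding extension from $P$ to $\R^{2N}$ in part~(ii)), together with the bookkeeping needed to see that the ``leading'' error term in part~(iii) is a \emph{finite} supremum so that $\sigma_P$-convergence applies. Both are routine once organized, and in fact the stronger hypothesis $(K_{xy})$ makes the whole argument cleaner than its $(K_x)$–$(K_y)$ and $(L_x)$ analogues, since the two variables can be treated symmetrically in a single move.
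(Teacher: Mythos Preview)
Your proposal is correct and follows essentially the same approach as the paper, which simply states that the three parts are proved by the arguments of Lemma~\ref{lem:f(t,a,b)->f(t,x(t),y(t))}, Proposition~\ref{prop:Kx-Ky propagate to closure}, and Proposition~\ref{prop:K_y=>equiv topol}, respectively. You have in fact supplied more detail than the paper does, correctly noting that the stronger hypothesis~\ref{Kxy} lets one sandwich in both variables at once and thereby dispense with the extra $L^1_{loc}$-boundedness of the $l$-bounds that was needed in Proposition~\ref{lem:Lx}(ii).
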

\begin{proof}
The statements can be proved using arguments similar to the ones employed in the proofs of Lemma~\ref{lem:f(t,a,b)->f(t,x(t),y(t))}, Proposition~\ref{prop:Kx-Ky propagate to closure} and Proposition~\ref{prop:K_y=>equiv topol}, respectively.
\end{proof}
Then the following additional result of continuity of the skew-product semiflows generated by a set of Carath\'eodory delay differential equations are obtained.
\begin{thm}\label{thm:K_xy=>cont skew-prod}
Consider $E\subset\LC(\R^{2N},\R^N)$ such that every element of $ E$ satisfies~\ref{Kxy}. Let $P$ be any countable dense subset of $\R^{2N}$  and $\sigma_{P},\; \T_{ P}$  the topologies introduced in {\rm Definition~\ref{def:TP}}.
\begin{itemize}[leftmargin=20pt,itemsep=2pt]
\item[\rm (i)]   If $\big(\phi_{n}\big)_\nin$ is a sequence converging uniformly to $\phi$ in $\mathcal{C}$, and $(f_n)_\nin$ is a sequence in $E$ converging to $f$ in $(\LC(\R^{2N},\R^N),\T_{P})$, then, with the notation of {\rm Theorem~\ref{eq:solLCEDDE}}, one has that
\begin{equation*}
 x(\cdot,f_n,\phi_{n}) \xrightarrow{\nti}  x(\cdot,f,\phi)
\end{equation*}
uniformly in any $[-1,T]\subset I_{f,\phi}$.
\item[\rm (ii)]  if $E$ is invariant with respect to the base flow, and $\overline E$ denotes the closure of $E$ in $(\LC(\R^{2N},\R^N),\T_{P})$ then  the map
\begin{equation*}
\qquad\Phi\colon \U\subset \R^+\times\overline E\times\mathcal{C}\to \overline E\times\mathcal{C},\quad (t,f,\phi)\mapsto  \big(f_t, x_t(\cdot,f,\phi)\big),
\end{equation*}
is a local monotone continuous skew-product semiflow
\item[\rm (iii)] If additionally $E$ has $L^1_{loc}$-equicontinuous $m$-bounds, then  {\rm (i)} and  {\rm (ii)} hold true also when the topology $\sigma_{P}$ is employed in place of $\T_{P}$.
\end{itemize}
\end{thm}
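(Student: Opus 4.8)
The plan is to transcribe the proof of Theorem~\ref{thm:K_xK_y+lipschitz=>cont skew-prod}, with the role played there by~\ref{Lx} taken over by the stronger monotonicity~\ref{Kxy}. As in that proof, it is enough to establish~(i): then~(ii) follows by combining~(i) with the continuity of $(t,f)\mapsto f_t$ from~\cite[Theorem~3.8]{paper:LNO3}, with Proposition~\ref{lem:Kxy}(ii) --- which propagates~\ref{Kxy} to the closure $\overline E$, so that~(i) applies with $\overline E$ in the role of $E$ --- and with Theorem~\ref{thm:monoton<=>Kamke} together with the implication~\ref{Kxy}$\Rightarrow$\ref{Kx},\ref{Ky}, which yields the monotonicity of the semiflow; and~(iii) is obtained by rerunning the proof of~(i) with $\sigma_P$ in place of $\T_P$, the only change being the source of equicontinuity.

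For~(i), work first on an interval $[0,T]$ with $0<T<\min\{1,b_{f,\phi}\}$, where the delay problem~\eqref{eq:solLCEDDE} reduces to the ordinary Cauchy problems~\eqref{eq:16/04-17:57}. Define $\rho$ as in~\eqref{eq:DefRHO}, introduce the truncated solutions $z_n$ and the stopping times $T_n$ exactly as in the proof of Theorem~\ref{thm:K_xK_y+lipschitz=>cont skew-prod}, and fix $j\in\N$ with $\sqrt{2}\,\rho<j$, so that $(z_n(s),\phi_n(s-1))\in B_j$ for every $\nin$ and $s\in[0,T_n)$; pick also $k_j>j$ and $a,b\in P\cap B_{k_j}$ with $a\le(x,y)\le b$ for all $(x,y)\in B_j$. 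The step where~\ref{Kxy} does genuinely new work, compared with the~\ref{Lx} argument, is the equicontinuity of $(z_n)_\nin$: by~\ref{Kxy} one has, componentwise and for a.e.\ $s\in[0,T_n)$, that $f_n(s,a)\le f_n\big(s,z_n(s),\phi_n(s-1)\big)\le f_n(s,b)$, whence $\big|f_n\big(s,z_n(s),\phi_n(s-1)\big)\big|\le|f_n(s,a)|+|f_n(s,b)|$, where the right-hand side involves only the \emph{fixed} points $a,b\in P$. Since $f_n(\cdot,a)\to f(\cdot,a)$ and $f_n(\cdot,b)\to f(\cdot,b)$ in $L^1$ on every compact interval --- this being precisely $\T_P$-convergence evaluated at $a,b\in P$ --- the families $\{f_n(\cdot,a)\}_\nin$ and $\{f_n(\cdot,b)\}_\nin$ are uniformly integrable on compact intervals, and, recalling that $z_n$ is constant on $[T_n,T]$, the equicontinuity of $(z_n)_\nin$ on $[0,T]$ follows at once. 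In case~(iii) the $L^1$-convergence is not available, but instead the $L^1_{loc}$-equicontinuity of the $m$-bounds of $E$ bounds $\big|f_n\big(s,z_n(s),\phi_n(s-1)\big)\big|$ by a member of an $L^1_{loc}$-equicontinuous family, which again gives the equicontinuity of $(z_n)_\nin$.

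From here on the argument copies the corresponding part of the proof of Theorem~\ref{thm:K_xK_y+lipschitz=>cont skew-prod}. By Ascoli-Arzel\'a some subsequence of $(z_n)_\nin$ converges uniformly on $[0,T]$ to a continuous function $z$. Defining $T_0$ as in~\eqref{eq:defT0}, choosing common moduli of continuity $\theta$ for $\{z_n\mid\nin\}\cup\{z\}$ and $\hat\theta$ for $\{\phi_n\mid\nin\}\cup\{\phi\}$, and setting $\Theta=\{\theta\}$, $\widehat\Theta=\{\hat\theta\}$, Proposition~\ref{lem:Kxy}(iii) upgrades the convergence $f_n\to f$ from $\T_P$ (resp.\ $\sigma_P$ in case~(iii)) to $\T_{\Theta\smash{\widehat\Theta}}$ (resp.\ $\sigma_{\Theta\smash{\widehat\Theta}}$), hence to $\sigma_{\Theta\smash{\widehat\Theta}}$ in either case. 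Passing to the limit as $\nti$ in $z_n(t)=\phi_n(0)+\int_0^t f_n\big(s,z_n(s),\phi_n(s-1)\big)\,ds$ on $[0,T_0]$ --- splitting the integrand as $\big[f_n-f\big]\big(s,z_n(s),\phi_n(s-1)\big)+f\big(s,z_n(s),\phi_n(s-1)\big)$ and using Lemma~\ref{lem:conv-subintHYBRID}(ii) on the first summand and Lemma~\ref{lem:conv-subintHYBRID}(i) on the second, first for rational $t$ and then for all $t\in[0,T_0]$ by continuity --- shows that $z$ solves the right-hand problem of~\eqref{eq:16/04-17:57} on $[0,T_0]$. The usual maximality argument forces $T_0=T$; uniqueness of solutions gives $z=x(\cdot,f,\phi)$ on $[0,T]$; and since every subsequence of $(z_n)_\nin$ has a further subsequence with this same limit, the whole sequence $\big(x(\cdot,f_n,\phi_n)\big)_\nin$ converges uniformly to $x(\cdot,f,\phi)$ on $[0,T]$. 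Iterating over consecutive unit-length subintervals yields the uniform convergence on any $[-1,T]\subset I_{f,\phi}$, which proves~(i) and, as explained, also~(ii) and~(iii). I expect the equicontinuity of $(z_n)_\nin$ under~\ref{Kxy} alone to be the only genuinely new point: once the two-sided sandwich by the fixed points $a,b\in P$ is in place it is closed either by uniform integrability (cases~(i) and~(ii)) or by the $m$-bound hypothesis (case~(iii)), and everything else is a routine adaptation of the already-established proof.
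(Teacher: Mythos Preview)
Your proposal is correct and follows essentially the same route as the paper. The paper's proof likewise reduces everything to the equicontinuity step and obtains it via the same two-sided sandwich $f_n(s,a)\le f_n\big(s,z_n(s),\phi_n(s-1)\big)\le f_n(s,b)$ coming from~\ref{Kxy}, then splits $|(f_n)_k(s,a)|\le|(f_n)_k(s,a)-f_k(s,a)|+m^{k_j}_f(s)$ (and similarly at $b$) to reach the bound~\eqref{eq:desi3.16}; your appeal to uniform integrability of $\{f_n(\cdot,a)\}_\nin$ and $\{f_n(\cdot,b)\}_\nin$ under $\T_P$-convergence is exactly this splitting phrased more succinctly, and the remainder --- Ascoli--Arzel\`a, the upgrade of convergence via Proposition~\ref{lem:Kxy}(iii), and the identification of the limit --- matches the paper verbatim.
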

\begin{proof}
The proof can be carried out reasoning  as in Theorem~\ref{thm:K_xK_y+lipschitz=>cont skew-prod}. Maintaining the notation of its proof, from~\ref{Kxy} we deduce the inequality
\begin{align}\nonumber
|(z_n)_k(t_1)-(z_n)_k(t_2)|& \le \int_{t_1}^{t_2}\big|(f_n)_k\big(s,z_n(s), \phi_n(s-1)\big)\big|\, ds\\  \label{eq:desi3.16}
& \le \int_{t_1}^{t_2}\big|(f_n)_k\big(s,a\big)-(f)_k\big(s,a\big)  \big|\, ds\\ \nonumber
&\qquad+\int_{t_1}^{t_2}\big|(f_n)_k\big(s,b\big)-(f)_k\big(s,b\big)  \big|\, ds+2\int_{t_1}^{t_2}m^{j+1}_f(s)\,ds\,,
\end{align}
for $k=1,\ldots,N$, which, from  the convergence of  $(f_n)_\nin$ to $f$ with respect to $\T_P$ and the absolute continuity of the Lebesgue integral, leads to the equicontinuity of $(z_n)_{\nin}$. The rest of the proof follows exactly the same arguments with the help, in this case, of Proposition~\ref{lem:Kxy}.
\end{proof}
The reasoning behind the proofs of Theorems~\ref{thm:K_xy=>cont skew-prod} and~\ref{thm:K_xK_y+lipschitz=>cont skew-prod} immediately suggests that if one deals with a set of ordinary differential equation of Carath\'eodory type, two new results of continuity for the associated skew-product flow can be derived.
Given $f\in \LC(\R^{N},\R^N)$ and $x_0\in\R^N$, we will denote by $x(\cdot,f,x_0): I_{f,x_0}\to\R^N$ the unique maximal solution of the Cauchy problem
\[  x'(t)=f\big(t,x(t)\big),\; x(0)=x_0\,.\]
The monotoniticy conditions, as well as the corresponding condition~\ref{Lx} for a set of functions in this case, are stated in the next definitions.
\begin{defn}
We say that $ f\in\LC(\R^{N},\R^N)$ satisfies
\begin{enumerate}[label=\upshape(\textbf{K$_\arabic*$}),leftmargin=27pt]
\item\label{K1} if for any $a,b\in\R^N$ with $a\le b$ and $a_k=b_k$ for some $k\in\{1,\dots,N\}$, then
\[
f_k(t,a)\le f_k(t,b),\qquad \text{for a.e. } t\in\R;\\[1ex]
\]
\item\label{K2} if for any $a,b\in\R^N$ with $a\le b$, then
\[
f_k(t,a)\le f_k(t,b),\qquad \text{for all } k=1,\dots,N,\text{ and a.e. } t\in\R.\\[1ex]
\]
\end{enumerate}
\end{defn}
\begin{defn} A set $E\subset \LC(\R^{N},\R^N)$ satisfies
\begin{enumerate}[label=\upshape(\textbf{L}$_1$),leftmargin=27pt]
\item\label{LxODE} if for each $f\in E$, $j\in\N$ and $k=1,\dots, N$ there is $u^j_{f_k}\in L^1_{loc}$ such that,  for any $x=(x_1,\dots,x_k,\dots,x_N)$, $\overline x^k=(x_1\dots,\overline x_k,\dots,x_N)\in B_j\subset\R^{N}$,
\begin{equation*}
|f_k(t,x)-f_k(t,\overline x^k)|\le u^j_{f_k}(t)\,|x_k-\overline x_k|, \quad\text{for a.e. }t\in\R\,,
\end{equation*}
 and the set $\{u^j_{f_k}\mid f\in E\}$ is $L^1_{loc}$-bounded.
\end{enumerate}
\end{defn}
The proofs of the following continuity results are omitted as they can be easily worked out from the ones of Theorems~\ref{thm:K_xK_y+lipschitz=>cont skew-prod}
and~\ref{thm:K_xy=>cont skew-prod}.
\begin{prop}\label{thm:Kx+LipschitzODEs-cont skew-prod}
Consider $E\subset\LC(\R^{N},\R^N)$ satisfying~\ref{LxODE} and such that every element of $ E$ satisfies~\ref{K1}. Let $D$ be a countable dense subset of $\R^{N}$.
\begin{itemize}[leftmargin=20pt,itemsep=2pt]
\item[\rm (i)]   Let $(f_n)_\nin$ be a sequence in $E$ converging to $f$ in $(\LC(\R^{N},\R^N),\T_{D})$ and $(x_{0,n})_\nin$ is a sequence converging to $x_0\in\R^N$, then
\[ \qquad x(\cdot,f_n,x_{0,n}) \xrightarrow{\nti}  x(\cdot,f,x_0)\]
     uniformly in any $[T_1,T_2]\subset I_{f,x_0}$.
\item[\rm (ii)]   If $E$ is invariant with respect to the base flow, and $\overline E$ denotes the closure of $E$ in $(\LC(\R^{N},\R^N),\T_D)$, then
\[
\Phi\colon \U\subset \R\times\overline E\times \R^N\to \overline E\times\R^N,\quad (t,f,x_0)\mapsto  \big(f_t, x(t,f,x_0)\big),
\]
is a local monotone continuous skew-product flow.
\item[\rm (iii)] If additionally $E$ has $L^1_{loc}$-equicontinuous $m$-bounds, then {\rm (i)} and  {\rm (ii)} hold true also when the topology $\sigma_{D}$ is employed in place of $\T_{D}$.
\end{itemize}
\end{prop}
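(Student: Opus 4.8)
The plan is to transcribe, essentially verbatim, the proof of Theorem~\ref{thm:K_xK_y+lipschitz=>cont skew-prod}, deleting the delay argument $\phi_n(\cdot-1)$ everywhere and replacing $\R^{2N}$ by $\R^N$; the hypotheses ~\ref{K1} and ~\ref{LxODE} play the roles of ~\ref{Kx} and ~\ref{Lx}, while the $y$-slot (and hence the condition ~\ref{Ky}) simply disappears. Since a Carath\'eodory ODE of this type is solvable on a two-sided interval, I would prove the uniform convergence on $[0,T_2]$ and on $[T_1,0]$ separately, the two cases being symmetric. For (i) I would fix $\rho=1+\max\{(|x_{0,n}|)_\nin,\,\|x(\cdot,f,x_0)\|_{\mathcal{C}([0,T])}\}$, introduce the stopped functions $z_n$ exactly as in that proof so that $(z_n)_\nin$ is uniformly bounded with range in some $B_j$, $2\rho<j$, and pick $a,b\in D\cap B_{k_j}$ with $a\le x\le b$ for all $x\in B_j$. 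Using ~\ref{K1} componentwise (freeze the $k$-th coordinate at $(z_n)_k(s)$ and squeeze the remaining ones between the coordinates of $a$ and of $b$) I would sandwich $(f_n)_k(s,z_n(s))$ between the scalar Carath\'eodory functions obtained from $f_n$ by setting all coordinates but the $k$-th equal to the corresponding coordinate of $a$, resp.\ of $b$, thus reaching the exact analogue of the estimate~\eqref{desi:gnhn}.

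The second step is the equicontinuity of $(z_n)_\nin$. Hypothesis ~\ref{LxODE} furnishes $L^1_{loc}$-bounded $l$-bounds for these scalar reductions; convergence of $(f_n)_\nin$ in $\T_D$ entails, via the ODE analogue of Proposition~\ref{lem:Lx}(ii) (which is proved as there, but with the ``$y$''-block absent, and makes $\T_D$-convergence of a sequence in $E$ equivalent to convergence in $\T_{D'}$ for every countable dense $D'\subset\R^N$ and in $\T_\Theta$ for every suitable $\Theta$), convergence of the scalar reductions in the pointwise topology on $\SC(\R,\R)$, which \cite[Theorem~4.12]{paper:LNO1} then upgrades to convergence in $\T_B$; this sends the first two integrals in the analogue of~\eqref{desi:gnhn} to $0$ as $\nti$, while the surviving $m$-bound integral is controlled by absolute continuity of the Lebesgue integral. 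Ascoli--Arzel\'a yields a uniformly convergent subsequence $z_n\to z$. To identify $z$ with $x(\cdot,f,x_0)$ I would pass to the limit in $z_n(t)=x_{0,n}+\int_0^t f_n(s,z_n(s))\,ds$: by the topological equivalence just mentioned, $(f_n)_\nin$ converges to $f$ also in $\T_\Theta$ with $\Theta=\{\theta\}$, $\theta$ a common modulus of continuity of $\{z_n\mid\nin\}\cup\{z\}$, and Lemma~\ref{lem:conv-subintHYBRID}(ii) taken with $M=0$ gives $\int_0^t f_n(s,z_n(s))\,ds\to\int_0^t f(s,z(s))\,ds$. A ``no escape'' argument forces the stopping time to equal $T$, and iterating over consecutive subintervals (forwards and backwards) gives (i).

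For (ii) I would combine (i) with the propagation of ~\ref{K1} to $\overline E$ (the ODE version of Proposition~\ref{prop:Kx-Ky propagate to closure}), the stability of ~\ref{LxODE} under the closure (the ODE version of Proposition~\ref{lem:Lx}(i)), and the continuity of $(t,f)\mapsto f_t$ from~\cite[Theorem~3.8]{paper:LNO3}, to conclude that $\Phi$ is a continuous skew-product flow; monotonicity would then follow from ~\ref{K1} through the comparison result~\cite[Theorem~2]{paper:Walter}, exactly as in the implication (a)$\Rightarrow$(b) of Theorem~\ref{thm:monoton<=>Kamke}. For (iii) I would rerun the proof of (i) with the only change that the equicontinuity of $(z_n)_\nin$ is now read off directly from the $L^1_{loc}$-equicontinuity of the $m$-bounds (so the scalar reductions and \cite[Theorem~4.12]{paper:LNO1} are not needed), and the passage to the limit in the integral equation is first carried out for rational $t$ and then extended by continuity of $z$ and of the integral operator.

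I expect the main obstacle to be not the bookkeeping but the verification that the monotone ``corner-squeezing'' device of Proposition~\ref{prop:K_y=>equiv topol} and Proposition~\ref{lem:Lx}(ii) still closes when only the componentwise condition ~\ref{K1} is available rather than a genuine Kamke pair: one must make sure that the sole place where off-diagonal monotonicity and Lipschitz control enter the estimate is precisely the term bounded, uniformly in $n$, by ~\ref{LxODE}. Once this is confirmed, everything else is a routine specialization to the delay-free setting of the arguments already developed in Theorems~\ref{thm:K_xK_y+lipschitz=>cont skew-prod} and~\ref{thm:K_xy=>cont skew-prod}.
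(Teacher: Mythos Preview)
Your proposal is correct and follows exactly the route the paper takes: the paper omits the proof of this proposition, stating only that it ``can be easily worked out from the ones of Theorems~\ref{thm:K_xK_y+lipschitz=>cont skew-prod} and~\ref{thm:K_xy=>cont skew-prod},'' and your plan is precisely that specialization to the delay-free setting. Your worry in the last paragraph dissolves on its own, since in the ODE case there is no $y$-block at all, so the corner-squeezing in the analogue of~\eqref{desi:gnhn} and in the ODE version of Proposition~\ref{lem:Lx}(ii) uses only~\ref{K1} (freeze the $k$-th coordinate, order the rest) and the $L^1_{loc}$-bounded diagonal Lipschitz control from~\ref{LxODE}, exactly as you describe.
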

\begin{prop}\label{thm:KxyODEs-cont skew-prod}
 Consider $E\subset\LC(\R^{N},\R^N)$ such that every element of $ E$ satisfies~\ref{K2} and let $D$ be a countable dense subset of $\R^{N}$.
\begin{itemize}[leftmargin=20pt,itemsep=2pt]
\item[\rm (i)]   Let $(f_n)_\nin$ be a sequence  in $E$ converging to $f$ in $(\LC(\R^{N},\R^N),\T_{D})$ and $(x_{0,n})_\nin$ is a sequence converging to $x_0\in\R^N$, then
\[ \qquad x(\cdot,f_n,x_{0,n}) \xrightarrow{\nti}  x(\cdot,f,x_0)\]
     uniformly in any $[T_1,T_2]\subset I_{f,x_0}$.
\item[\rm (ii)] If $E$ is invariant with respect to the base flow, and $\overline E$ denotes the closure of $E$ in $(\LC(\R^{N},\R^N),\T_D)$, then
\[
\Phi\colon \U\subset \R\times\overline E\times \R^N\to \overline E\times\R^N,\quad (t,f,x_0)\mapsto  \big(f_t, x(t,f,x_0)\big),
\]
is a local monotone continuous skew-product flow.
\item[\rm (iii)] If additionally $E$ has $L^1_{loc}$-equicontinuous $m$-bounds, then {\rm (i)} and  {\rm (ii)} hold true also when the topology $\sigma_{D}$ is employed in place of $\T_{D}$.
\end{itemize}
\end{prop}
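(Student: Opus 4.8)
The plan is to transcribe the proof of Theorem~\ref{thm:K_xy=>cont skew-prod} to the ordinary setting, suppressing the delayed slot and replacing~\ref{Kxy} by~\ref{K2}. For~(i), fix $x_0\in\R^N$; since $I_{f,x_0}$ is an open interval containing $0$, it is enough to prove uniform convergence on every $[0,T]\subset I_{f,x_0}$ and, by the time reversal $s\mapsto-s$ (which sends $f$ to $(s,x)\mapsto-f(-s,x)$, a function still covered by the same argument), on every $[T_1,0]\subset I_{f,x_0}$. On $[0,T]$, set $\rho=1+\max\{\sup_{\nin}|x_{0,n}|,\ \|x(\cdot,f,x_0)\|_{\mathcal{C}([0,T])}\}$, let $T_n=\sup\{t\in[0,T]\mid|x(s,f_n,x_{0,n})|\le\rho\text{ for all }s\in[0,t]\}$, and define the truncated solutions $z_n(t)=x(t,f_n,x_{0,n})$ for $t<T_n$ and $z_n(t)=x(T_n,f_n,x_{0,n})$ for $T_n\le t\le T$, so that $(z_n)_\nin$ is uniformly bounded, say $z_n(s)\in B_j$ for a fixed $j\in\N$; then pick $a,b\in D\cap B_{k_j}$ with $k_j>j$ such that $a\le x\le b$ for every $x\in B_j$.

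The heart of the matter is the equicontinuity of $(z_n)_\nin$. For $0\le t_1<t_2\le T$ and each $k\in\{1,\dots,N\}$, from $a\le z_n(s)\le b$ and~\ref{K2} one gets $(f_n)_k(s,a)\le(f_n)_k(s,z_n(s))\le(f_n)_k(s,b)$, hence $|(f_n)_k(s,z_n(s))|\le|(f_n)_k(s,a)-f_k(s,a)|+|(f_n)_k(s,b)-f_k(s,b)|+2\,m^{k_j}_f(s)$, where $m^{k_j}_f$ is the optimal $m$-bound of $f$ on $B_{k_j}$; integration then yields
\[
|(z_n)_k(t_1)-(z_n)_k(t_2)|\le\int_{t_1}^{t_2}|f_n(s,a)-f(s,a)|\,ds+\int_{t_1}^{t_2}|f_n(s,b)-f(s,b)|\,ds+2\int_{t_1}^{t_2}m^{k_j}_f(s)\,ds .
\]
The convergence $(f_n)_\nin\to f$ in $\T_D$ drives the first two integrals to $0$ uniformly in $t_1,t_2$ (each is dominated by $p_{I,a}(f_n-f)$, resp. $p_{I,b}(f_n-f)$, for a fixed rational interval $I\supset[0,T]$), while absolute continuity of the Lebesgue integral makes the third term $<\ep$ once $t_2-t_1<\delta$; on $[T_n,T]$ the left-hand side vanishes. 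Thus $(z_n)_\nin$ is uniformly bounded and equicontinuous, and Ascoli-Arzel\'a produces a subsequence converging uniformly on $[0,T]$ to a continuous $z$.

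It remains to identify $z$ with $x(\cdot,f,x_0)$. Setting $T_0=\sup\{t\in[0,T]\mid|z(s)|<\rho-1/2\text{ for all }s\in[0,t]\}>0$, for large $n$ one has $z_n=x(\cdot,f_n,x_{0,n})$ on $[0,T_0]$, whence $z_n(t)=x_{0,n}+\int_0^tf_n(s,z_n(s))\,ds$ there. As $\{z_n\mid\nin\}\cup\{z\}$ admits a common modulus of continuity $\theta$, taking $\Theta=\{\theta\}$ and using the ordinary-equation counterpart of Proposition~\ref{lem:Kxy}(iii) (legitimate since every $f_n$ satisfies~\ref{K2}, and proved by the same sandwiching as Proposition~\ref{prop:K_y=>equiv topol}), $(f_n)_\nin$ converges to $f$ also in $\T_\Theta$; a Lemma~\ref{lem:conv-subintHYBRID}-type passage to the limit then gives $z(t)=x_0+\int_0^tf(s,z(s))\,ds$ on $[0,T_0]$, i.e.\ $z=x(\cdot,f,x_0)$ there, and the maximality argument of Theorem~\ref{thm:K_xK_y+lipschitz=>cont skew-prod}(i) forces $T_0=T$. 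Since the limit is independent of the subsequence, the whole sequence converges, and covering $[T_1,T_2]$ by finitely many such intervals (forward and backward) completes~(i). For~(ii), the counterpart of Proposition~\ref{lem:Kxy}(ii) shows $\overline E$ still satisfies~\ref{K2}, so the flow on $\overline E$ is well defined; its continuity follows from~(i) and the continuity of the base flow $(t,f)\mapsto f_t$ on $(\LC(\R^N,\R^N),\T_D)$ (the ordinary analogue of~\cite[Theorem~3.8]{paper:LNO3}), and its monotonicity from the fact that~\ref{K2} implies the classical Kamke condition~\ref{K1}, under which $x_0\le y_0$ gives $x(t,f,x_0)\le x(t,f,y_0)$ for $t\ge0$ by Walter~\cite[Theorem~2]{paper:Walter}. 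For~(iii), when $E$ has $L^1_{loc}$-equicontinuous $m$-bounds the equicontinuity of $(z_n)_\nin$ is immediate from the $m$-bounds, so $\sigma_D$ may replace $\T_D$; the limit in the integral equation is then taken first for rational $t$ and extended to all $t\in[0,T_0]$ by continuity of $z$ and of the integral operator, exactly as in Theorem~\ref{thm:K_xK_y+lipschitz=>cont skew-prod}(iii).

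I do not expect a genuinely new obstacle here: this is the delay proof with the $y$-slot erased. The only matters requiring some (routine) care are that the three auxiliary facts invoked — stability of~\ref{K2} under the closure, the upgrade from $\T_D$- to $\T_\Theta$-convergence under~\ref{K2}, and the continuity of the translation flow on $\SC(\R^N,\R^N)$ — are the ordinary-equation analogues of statements proved in the excerpt (or in~\cite{paper:LNO3}) only for $\R^{2N}$, and that the portion $t<0$ of $I_{f,x_0}$ must be handled by time reversal, which leaves the continuity argument intact but reverses the sense of the monotonicity, so it bears only on the continuity, not on the monotone character, of the resulting flow.
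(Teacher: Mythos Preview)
Your proposal is correct and follows precisely the route the paper intends: the paper omits the proof entirely, stating only that it ``can be easily worked out from the ones of Theorems~\ref{thm:K_xK_y+lipschitz=>cont skew-prod} and~\ref{thm:K_xy=>cont skew-prod},'' and your transcription of the latter to the ordinary setting---with the sandwich $(f_n)_k(s,a)\le(f_n)_k(s,z_n(s))\le(f_n)_k(s,b)$ replacing~\eqref{eq:desi3.16}---is exactly that. Your explicit flagging of the time-reversal step for $t<0$ and of the need for ordinary-equation analogues of Proposition~\ref{lem:Kxy} and~\cite[Theorem~3.8]{paper:LNO3} is appropriate caution that the paper leaves implicit.
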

\section{Monotone sublinear Carath\'{e}odory skew-product semiflows}\label{sec:sublinear}
Let $E$ be a subset of $\LC(\R^{2N},\R^N)$ invariant with respect to the base flow  and closed with respect to a topology $\T$.  In the previous section, assuming appropriate conditions on $E$  and $\T$, we obtained several results asserting that the map
\begin{equation}\label{monosemitau}
\Phi \colon \R^+\times E\times\mathcal{C}\to E\times\mathcal{C},\quad (t,f,\phi)\mapsto  \big(f_t, x_t(\cdot,f,\phi)\big)
\end{equation}
is a monotone continuous skew-product semiflow. This will be the situation considered throughout the current section. Recall that $\mathcal{C}$  is endowed with a strong partial ordering~\eqref{orderingC} with positive cone $\mathcal{C}^+=\{\phi\in \mathcal{C}\mid \phi\geq 0\}$. \par
The following definitions, previously introduced both for deterministic and random monotone semiflows (see~\cite{paper:nno2005} and~\cite{book:chue}), are of great importance for the long-term behavior of the trajectories. Note that these concepts are associated to the semiflow $\Phi$,
although this dependence does not appear explicitly in the
definition.
\begin{defn}[semi-equibrium]\label{defn:semi-equilibria}
A map $a\colon E\to \mathcal{C}$ such that $x(t,f,a(f))$ is defined for any
$t\geq 0$~is
\begin{itemize}[leftmargin=25pt,itemsep=3pt]
\item[\rm (i)] a \emph{sub-equilibrium}  if $a(f_t)\le x_t(\cdot,f,a(f))$  for any $f\in E$ and $t\ge0$,
\item[\rm (ii)] a \emph{super-equilibrium}  if $x_t(\cdot,f,a(f))\leq a(f_t)$  for any $f\in E$ and $t\ge0$,
\item[\rm (iii)] an \emph{equilibrium} if $a(f_t)=x_t(\cdot,f,a(f))$ for any $f\in E$ and $t\ge0$.
\end{itemize}
 We will use the term {\em semi-equilibrium} to
refer either to a super or a
sub-equilibrium.
\end{defn}
\begin{defn}\label{def:semicontinuity}
A sub-equilibrium (resp. super-equilibrium) $a\colon E\to\mathcal{C}$ is  \emph{lower-semicontinuous} (resp. \emph{upper-semicontinuous)} if for any sequence $(f_n)_\nin$ converging to some $f$ in $E$ with respect to $\T$:
\begin{itemize}[leftmargin=25pt,itemsep=3pt]
\item[\rm (i)] $\{a(f_n)\mid\nin\}$ is a relatively compact subset of $\mathcal{C}$, and
\item[\rm (ii)]$a(f)\le \phi$ (resp. $\phi\le a(f)$) whenever  there is a subsequence $(f_{n_k})_{k\in\N}$ satisfying $\lim_{k\to\infty}a(f_{n_k})=\phi$.
\end{itemize}
These concepts apply to the case of an equilibrium because it is both a sub- and a super-equilibrium.
\end{defn}
The notion of semicontinuous semiequilibrium introduced above will be enough for the purposes of this work. Anyhow, it is easy to check that if $a\colon E\to\mathcal{C}$ satisfies Definition~\ref{def:semicontinuity}, then it is possible to define an associated multievaluated function $\mathfrak{a}\colon E\to\mathcal{C}$ which is upper-semicontinuous in the usual sense (see Aubin and Frankowska \cite{book:AF}.
\begin{rmk}
Notice that if $a$ is a lower-semicontinuous sub-equilibrium, then the set $\{(f,\phi)\in E\times \mathcal{C} \mid  a(f) \le \phi\}$ is a positively invariant closed set. Indeed, the positively invariance follows from Definition~\ref{defn:semi-equilibria}(i). As regards the closeness, consider a sequence $(f_n,\phi_n)_\nin$ converging to some $(f,\phi)\in E\times\mathcal{C}$ and such that for each $\nin$, $a(f_n)\leq \phi_n$. By the lower-semicontinuity there is a subsequence $(f_{n_k})_{k\in\N}$ and $\psi\in\mathcal{C}$ such that $\lim_{k\to\infty}a(f_{n_k})=\psi$ with $a(f)\leq \psi$, and  hence, $a(f)\leq \psi\le \phi$, as claimed. Similarly, $\{(f,\phi)\in E\times\mathcal{C}\mid b(f)\ge \phi\}$  is a positively invariant closed set, provided that $b$ is an upper-semicontinuous super-equilibrium.
\end{rmk}
\begin{thm}\label{thm:equilibria} Consider $\Phi$  one of the monotone continuous skew-product semiflows given in~\eqref{monosemitau}.
  Let $a\colon E\to\mathcal{C}$ be a lower-semicontinuous sub-equilibrium and let $b\colon E\to\mathcal{C}$  be an upper-semicontinuous super-equilibrium such that $a(f)\le b(f)$ for each $f\in E$, then
\begin{itemize}[leftmargin=25pt,itemsep=2pt]
\item[\rm (i)]  for any $\tau\ge0$ the function
\begin{equation*}
a_\tau\colon E\to \mathcal{C} ,\quad
f\mapsto a_\tau(f)=x_\tau(\cdot,f_{-\tau},a(f_{-\tau}))
\end{equation*}
is a lower-semicontinuous sub-equilibrium  satisfying
\begin{equation}\label{eq:inefora}
a(f)\le a_{\tau_1}(f)\le a_{\tau_2}(f)\le b(f) \quad \text{whenever}\;\; 0\le \tau_1\le \tau_2 \text{ and } f\in E;
\end{equation}
\item[\rm (ii)] the limit $u(f)=\lim_{\tau\to\infty}a_{\tau}(f)$
exists for any $f\in E$, and $u\colon E\to\mathcal{C}$ is a lower-semicontinuous equilibrium;
\item[\rm (iii)] for any $\tau>0$ the function
\[
b_\tau\colon E\to \mathcal{C} ,\quad
f\mapsto b_\tau(f)=x_\tau(\cdot,f_{-\tau},b(f_\tau)),
\]
is an upper-semicontinuous super-equilibrium  satisfying
\begin{equation}\label{eq:ineforb}
a(f)\le b_{\tau_2}(f)\le b_{\tau_1}(f)\le b(f) \quad \text{whenever}\;\; 0\le \tau_1\le \tau_2 \text{ and } f\in E;
\end{equation}
\item[\rm (iv)] the limit $ v(f)=\lim_{\tau\to\infty}b_{\tau}(f) $
exists  for any $f\in E$, and $v\colon E\to\mathcal{C}$ is an  upper-semicontinuous equilibrium;
\item[\rm (v)] for any $f\in E$, $a(f)\le u(f)\le v(f)\le b(f)$.
\end{itemize}
\end{thm}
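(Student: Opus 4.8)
The plan is to derive everything from four ingredients: the cocycle identity $x_{t+s}(\cdot,g,\psi)=x_t(\cdot,g_s,x_s(\cdot,g,\psi))$, the monotonicity of $\Phi$, the continuity of $\Phi$ on $\U$ together with the continuity of the base flow $(t,g)\mapsto g_t$, and the defining (in)equalities of $a$ and $b$ with $a\le b$. I would first record that $a_\tau$ is well defined, since $a$ being a sub-equilibrium makes $x(\cdot,g,a(g))$ global for every $g\in E$, in particular for $g=f_{-\tau}$, and that every intermediate solution appearing below is also global, being sandwiched between $x(\cdot,g,a(g))$ and $x(\cdot,g,b(g))$. For part (i): writing $x_t(\cdot,f,a_\tau(f))=x_{t+\tau}(\cdot,f_{-\tau},a(f_{-\tau}))$ and $a_\tau(f_t)=x_\tau(\cdot,f_{t-\tau},a(f_{t-\tau}))$, both by the cocycle identity, the sub-equilibrium property $a_\tau(f_t)\le x_t(\cdot,f,a_\tau(f))$ follows by feeding the sub-equilibrium inequality $a(f_{t-\tau})\le x_t(\cdot,f_{-\tau},a(f_{-\tau}))$ into the monotone map $x_\tau(\cdot,f_{t-\tau},\cdot)$. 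The chain \eqref{eq:inefora} comes out the same way: $a(f)\le a_\tau(f)$ is the sub-equilibrium inequality of $a$ at $(f_{-\tau},\tau)$; $a_{\tau_1}(f)\le a_{\tau_2}(f)$ follows by writing $b$-free $a_{\tau_2}(f)=x_{\tau_1}\big(\cdot,f_{-\tau_1},x_{\tau_2-\tau_1}(\cdot,f_{-\tau_2},a(f_{-\tau_2}))\big)$ and comparing through $a(f_{-\tau_1})\le x_{\tau_2-\tau_1}(\cdot,f_{-\tau_2},a(f_{-\tau_2}))$; and $a_\tau(f)\le b(f)$ follows from $a(f_{-\tau})\le b(f_{-\tau})$, monotonicity, and the super-equilibrium inequality of $b$. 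For the lower-semicontinuity of $a_\tau$, given $f_n\to f$ in $E$, continuity of the base flow gives $(f_n)_{-\tau}\to f_{-\tau}$, so $\{a((f_n)_{-\tau})\}$ is relatively compact; along a subsequence with $a((f_{n_k})_{-\tau})\to\psi$ (hence $a(f_{-\tau})\le\psi\le b(f_{-\tau})$, the upper bound via upper-semicontinuity of $b$ and $a\le b$), continuity of $\Phi$ yields $a_\tau(f_{n_k})\to x_\tau(\cdot,f_{-\tau},\psi)$, giving relative compactness of $\{a_\tau(f_n)\}$, while monotonicity forces $a_\tau(f)\le x_\tau(\cdot,f_{-\tau},\psi)$, the required domination.

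For part (ii): for each $s\in[-1,0]$ the map $\tau\mapsto a_\tau(f)(s)$ is nondecreasing in $\R^N$ and bounded above by $b(f)(s)$, hence converges to some $u(f)(s)$. To promote this to convergence in $\mathcal{C}$, I would observe that for $\tau\ge1$ the function $a_\tau(f)$ is a segment, lying past the initial interval, of the solution for $f_{-\tau}$; by the identity $x_t(\cdot,f_{-\tau},a(f_{-\tau}))=a_t(f_{t-\tau})$ of part (i) and \eqref{eq:inefora}, this segment is trapped in a ball $B_R\subset\R^{2N}$ with $R$ depending only on $f$ (the relevant vector-field translates range over a fixed compact subset of $E$). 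Hence its derivative on $[-1,0]$ is bounded, independently of $\tau$, by the optimal $m$-bound of $f$ on $B_R$, so $\{a_\tau(f):\tau\ge1\}$ is equicontinuous and, by Ascoli-Arzel\'a, relatively compact in $\mathcal{C}$; a relatively compact nondecreasing family that converges pointwise converges in norm, so $a_\tau(f)\to u(f)$ in $\mathcal{C}$, $u(f)\in\mathcal{C}$, and $a(f)\le u(f)\le b(f)$. That $u$ is an equilibrium follows from the cocycle identity $x_t(\cdot,f,a_\tau(f))=a_{t+\tau}(f_t)$: letting $\tau\to\infty$, the left side tends to $x_t(\cdot,f,u(f))$ by continuity of $\Phi$ and the right side to $u(f_t)$.

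I expect the lower-semicontinuity of $u$ to be the main obstacle. The inequality part is easy once relative compactness is known: if $f_n\to f$ and $u(f_{n_k})\to\phi$, then for each fixed $\tau$, lower-semicontinuity of $a_\tau$ (from part (i)) applied along a further subsequence gives a limit point $\psi_\tau$ of $\{a_\tau(f_{n_k})\}$ with $a_\tau(f)\le\psi_\tau$, and $\psi_\tau\le\phi$ because $a_\tau(f_{n_k})\le u(f_{n_k})$; letting $\tau\to\infty$ yields $u(f)\le\phi$. The relative compactness of $\{u(f_n)\}$ is the delicate point: I would prove it by using that $u$ is an equilibrium, so $u(f_n)=x_\tau(\cdot,(f_n)_{-\tau},u((f_n)_{-\tau}))$; the solution segments are uniformly bounded because $a$ and $b$ have bounded image on the compact set $\{g_\rho:g\in\{f_n\}_{n\in\N}\cup\{f\},\ \rho\in[-\tau,0]\}\subset E$ — a consequence of their semicontinuity — and the $L^1_{loc}$-equicontinuity of the $m$-bounds of $E$ then provides a uniform modulus of continuity for the $u(f_n)$, after which Ascoli-Arzel\'a applies.

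Parts (iii) and (iv) are the order-reversed versions of (i) and (ii) applied to the super-equilibrium $b$: one checks that $b_\tau$ is an upper-semicontinuous super-equilibrium, nonincreasing in $\tau$ and bounded below by $a(f)$ (this is \eqref{eq:ineforb}), that $v(f)=\lim_{\tau\to\infty}b_\tau(f)$ exists in $\mathcal{C}$, and that $v$ is an upper-semicontinuous equilibrium, all by repeating the computations above with every inequality reversed. Finally, for part (v), the outer inequalities $a(f)\le u(f)\le b(f)$ and $a(f)\le v(f)\le b(f)$ are already contained in (ii) and (iv); for $u(f)\le v(f)$ I would first prove $a_\tau(f)\le b_\sigma(f)$ for all $\tau,\sigma\ge0$ — for $\tau\le\sigma$ by writing $b_\sigma(f)=x_\tau\big(\cdot,f_{-\tau},x_{\sigma-\tau}(\cdot,f_{-\sigma},b(f_{-\sigma}))\big)$, bounding the inner term below by $a(f_{-\tau})$ using the sandwich $a(f_\rho)\le a_\cdot(f_\rho)\le b(f_\rho)$ from (i), and applying monotonicity, with the case $\tau>\sigma$ symmetric — and then let $\tau\to\infty$ and $\sigma\to\infty$.
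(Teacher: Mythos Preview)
Your proposal follows the paper's approach closely and is essentially correct for parts (i), (iii), (v), the existence of $u,v$, and the equilibrium property. Where the paper cites \cite[Proposition~3.4.1]{book:chue} for the sub-equilibrium property and the chain \eqref{eq:inefora}, you spell out the cocycle/monotonicity computation explicitly; your argument for the existence of $u(f)$ via equicontinuity of $\{a_\tau(f):\tau\ge1\}$ is the same as the paper's once one notes that your observation (``the derivative on $[-1,0]$ is bounded by the optimal $m$-bound of $f$ on $B_R$'') is exactly the content of the paper's identity $a_\tau(f)=x_1(\cdot,f_{-1},a_{\tau-1}(f_{-1}))$.

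There is one genuine gap, in the relative compactness of $\{u(f_n)\}$ for the lower-semicontinuity of $u$. You obtain the needed equicontinuity by invoking the $L^1_{loc}$-equicontinuity of the $m$-bounds of $E$, but the theorem is stated for \emph{all} the semiflows of \eqref{monosemitau}, and these include the cases of Theorem~\ref{thm:K_xK_y+lipschitz=>cont skew-prod}(i)--(ii) and Theorem~\ref{thm:K_xy=>cont skew-prod}(i)--(ii) under the topology $\T_P$, where the $m$-bounds are \emph{not} assumed $L^1_{loc}$-equicontinuous. In those cases the equicontinuity of $u(f_n)=x_1(\cdot,(f_n)_{-1},u((f_n)_{-1}))$ cannot come from a uniform $m$-bound and must be obtained differently: the paper reuses the device that produced the equicontinuity of the approximating solutions $(z_n)$ in those theorems, namely the componentwise sandwich estimates \eqref{desi:gnhn} and \eqref{eq:desi3.16}, which exploit the monotonicity hypotheses together with the $\T_P$-convergence $(f_n)_{-1}\to f_{-1}$. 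Without this case distinction your argument does not cover the full statement; the fix is precisely to insert that alternative equicontinuity argument when $L^1_{loc}$-equicontinuity of the $m$-bounds is not available.
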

\begin{proof}
(i) See~\cite[Proposition 3.4.1]{book:chue} to prove that $a_\tau$ is a sub-equilibrium satisfying~\eqref{eq:inefora} for each fixed $\tau> 0$. Next we show that it is lower-semicontinuous. First we check that given a sequence $(f_n)_\nin$ converging to some $f\in E$ with respect to $\T$, the set
$\{a_\tau(f_n)\mid\nin\}$ is relatively compact in $\mathcal{C}$, i.e. there is a convergent subsequence. From  $\lim_{n\to\infty}(f_n)_{-\tau}= f_{-\tau}$ and the lower semicontinuity  of $a$  we deduce that there is a subsequence $(f_{n_k})_{k\in\N}$  and $\phi^\tau\in \mathcal{C}$ such that
\[\lim_{k\to\infty} a((f_{n_k})_{-\tau})=\phi^\tau \quad  \text{with }  a(f_{-\tau})\leq \phi^\tau\,.\]
Then from the continuity of the skew-product semiflow~\eqref{monosemitau} we conclude that
\[ a_\tau(f_{n_k})=x_\tau(\cdot, (f_{n_k})_{-\tau}, a((f_{n_k})_{-\tau}))\xrightarrow{k\to\infty} x_\tau(\cdot,f_{-\tau}, \phi^\tau) \quad \text{in } \mathcal{C}\,,\]
as claimed. Finally, $a_\tau(f)=x_\tau(\cdot, f_{-\tau}, a(f_{-\tau}))\leq x_\tau(\cdot, f_{-\tau},\phi^\tau)$ follows from the monotone character of the skew-product semiflow~\eqref{monosemitau}, and hence, the lower-semicontinuity of $a_\tau$ is obtained.\par\smallskip
(ii)  First we claim that, for each $f\in E$,  the set  $\{ a_\tau(f)\mid \tau \geq 1\}\subset \mathcal{C}$ is relatively compact. The boundedness follows from~\eqref{eq:inefora}, and the equicontinuity from the cocycle property
\[a_\tau(f) = x_\tau(\cdot, f_{-\tau}, a(f_{-\tau}))\!=\! x_1(\cdot,f_{-1}, x_{\tau-1}(\cdot,f_{-\tau}, a(f_{-\tau}))) = x_1(\cdot,f_{-1},a_{\tau-1}(f_{-1}))
\]
and the $m$-bound for $f_{-1}$ because $\{a_{\tau-1}(f_{-1})\mid \tau\geq 1\}$ is also bounded.
As a consequence, together  with the monotonicity property~\eqref{eq:inefora}, we deduce  that there exist a unique $u(f)=\lim_{\tau\to\infty}a_{\tau}(f)=\sup_{\tau\geq 0} a_\tau(f)$ in $\mathcal{C}$, as stated. In order to check that $u$ is an equilibrium, notice that
\begin{align*}
x_t(\cdot,f,u(f))=&\lim_{\tau\to\infty} x_t(\cdot,f, a_\tau(f))=\lim_{\tau\to\infty} x_t(\cdot,f, x_\tau(\cdot, f_{-\tau}, a(f_{-\tau})))\\
= & \lim_{\tau\to\infty} x_{t+\tau}(\cdot, f_{-\tau}, a(f_{-\tau}))=\lim_{\tau\to\infty} a_{t+\tau}(f_t)=u(f_t)
\end{align*}
whenever $t\ge 0$ and $f\in E$.
Finally, we prove that it is lower-semicontinuous. Let $(f_n)_\nin$ be a sequence converging to some $f\in E$ with respect to $\T$. We check that $\{u(f_n)\mid \nin\}$ is relatively compact.  The boundedness follows from~\eqref{eq:inefora}, the lower-semicontinuity of $a$ and the upper-semicontinuity of $b$. Next notice that, since $u$ is an equilibrium, we have
\[u(f_n)=x_1(\cdot, (f_n)_{-1}, u((f_n)_{-1}))=x(1+\cdot, (f_n)_{-1}, u((f_n)_{-1}))\quad \text{for each } \nin, \]
 and then,  if we are in one of the cases for which $E$ has $L^1_{loc}$-equicontinuous $m$-bounds, the equicontinuity follows from the boundedness of the set of initial data $\{u((f_n)_{-1}) \mid\nin\}$. When the $L^1_{loc}$-equicontinuity of the $m$-bounds is not verified, that is, Theorem~\ref{thm:K_xK_y+lipschitz=>cont skew-prod}[(i)-(ii)]  and Theorem~\ref{thm:K_xy=>cont skew-prod}[(i)-(ii)], the equicontinuity follows from similar arguments to the ones applied to check the equicontinuity of the sequence $(z_n)_{\nin}$ in these theorems, see inequalities~\eqref{desi:gnhn} and~\eqref{eq:desi3.16}, respectively. Therefore,  $\lim_{k\to\infty} u(f_{n_k})=\phi$ for some $\phi$, and from the lower-semicontinuity of $a_\tau$, up to a subsequence, there is  a $\psi^\tau\in\mathcal{C}$  with
 \[a_\tau(f)\leq \psi^\tau=\lim_{k\to\infty}a_\tau(f_{n_k})\quad \text{for each } \tau\geq 0\,.\]
 From $a_\tau(f_{n_k})\leq u(f_{n_k})$ we deduce that $\psi^\tau\leq \phi$, and thus, $u(f)= \sup_{\tau\ge 0} a_\tau(f)\leq \phi$, which finish the proof of the lower-semicontinuity.
\par\smallskip
(iii) and (iv) can be proved reasoning as for (i) and (ii) respectively, but inverting the appropriate inequalities accordingly. (v) follows from~\eqref{eq:inefora} and~\eqref{eq:ineforb}.
\end{proof}
Next we introduce the concepts of sublinear functions and sublinear skew-product semiflows in our context.
\begin{defn}\label{sublinear}
A function $f\in\LC(\R^{2N},\R^N)$ is said to be \emph{sublinear} if
\begin{enumerate}[label=\upshape(\textbf{S}),leftmargin=20pt]
\item\label{S} for each $x,y\in\R^N$, $x$, $y\ge 0$, and  each $\lambda\in[0,1]$
\begin{equation*}
f(t,\lambda\, x,\lambda\, y)\ge \lambda f(t,x,y)\;   \quad \text{ for a.e. } t\in\R\,.
\end{equation*}
\end{enumerate}
A subset $E\subset\LC(\R^{2N},\R^N)$ is said to be \emph{sublinear} or to satisfy~\ref{S} if all its elements are sublinear.
\end{defn}
\begin{defn}\label{def:skew-prod-sublinear}
The skew-product semiflow~\eqref{monosemitau}  is said to be \emph{sublinear} if for each function $f\in E$
\begin{equation}\label{defi:sub}
x_t(\cdot, f,\lambda\,\phi)\ge \lambda\, x_t(\cdot, f,\phi) \quad \text{whenever } \; t> 0,\;  \lambda\in[0,1] \text{ and } \phi\ge 0\,.
\end{equation}
A function $f\in\LC(\R^{2N},\R^N)$ is said to be a \emph{point of strong sublinearity} for $\Phi$ if
\begin{equation}\label{defi:strongsub}
x_t(\cdot, f,\lambda\,\phi)\gg \lambda\, x_t(\cdot, f,\phi) \quad \text{whenever } \; t> 1,\;  \lambda\in(0,1) \text{ and } \phi\gg 0.
\end{equation}
\end{defn}
The next technical lemma allows us to pass from~\ref{S}, formulated pointwise, to a similar condition involving continuous functions. We omit the proof, analogous to the one of Lemma~\ref{lem:f(t,a,b)->f(t,x(t),y(t))}.
\begin{lem}\label{lem:S}
If $f\in\LC(\R^{2N},\R^N)$ satisfies~\ref{S}, then for each $\lambda\in [0,1]$, any interval $I\subset\R$ and any functions $x,\,y\in \mathcal{C}(I,\R^N)$, with $x$, $y\ge0$  one has that
\begin{equation*}
f\big(t,\lambda\,x(t),\lambda\,y(t)\big)\ge \lambda \,f\big(t,x(t),y(t)\big),\qquad \text{for a.e. } t\in I.
\end{equation*}
\end{lem}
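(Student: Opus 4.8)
The plan is to imitate verbatim the proof of Lemma~\ref{lem:f(t,a,b)->f(t,x(t),y(t))}: the hypothesis~\ref{S} gives the desired inequality pointwise in the spatial arguments up to a null set that may depend on those arguments, so one first secures it along a countable dense set of times and then spreads it to almost every $t$ using continuity of $x,y$ together with the Lipschitz bound~\ref{L}. Since the conclusion is an almost-everywhere statement on $I$, I would first reduce to the case of an arbitrary compact subinterval $I'=[c,d]\subset I$ and then exhaust $I$ by countably many such intervals.

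So, fix $\lambda\in[0,1]$ and a compact interval $I'\subset I$, and set $K:=\{(\mu\,x(s),\mu\,y(s))\mid s\in I',\ \mu\in[0,1]\}\subset\R^{2N}$, which is compact as the continuous image of $I'\times[0,1]$ and contains every point $(x(s),y(s))$ and $(\lambda\,x(s),\lambda\,y(s))$ with $s\in I'$. Let $D=\{s_n\mid n\in\N\}$ be dense in $I'$. Applying~\ref{S} to the nonnegative vectors $x(s_n),y(s_n)$ (and the fixed $\lambda$) yields, for each $n$, a full-measure set $J_n\subset I'$ with
\begin{equation*}
f\big(t,\lambda\,x(s_n),\lambda\,y(s_n)\big)\ge \lambda\, f\big(t,x(s_n),y(s_n)\big)\qquad\text{for all }t\in J_n,
\end{equation*}
while~\ref{L} applied to $K$ gives $l^K\in L^1_{loc}$ and a full-measure set $J_0\subset I'$ on which the Lipschitz estimate holds on $K$ with $l^K(t)<\infty$. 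Then $J:=J_0\cap\bigcap_{n\in\N}J_n$ still has full measure in $I'$.

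Finally, fixing $t\in J$ I would pick a subsequence $s_{n_k}\to t$; the inequality above holds at each $s_{n_k}$, and since $x,y$ are continuous the arguments converge, $\lambda\,x(s_{n_k})\to\lambda\,x(t)$, $\lambda\,y(s_{n_k})\to\lambda\,y(t)$, staying inside $K$, so the bound $l^K(t)<\infty$ forces $f(t,x(s_{n_k}),y(s_{n_k}))\to f(t,x(t),y(t))$ and $f(t,\lambda\,x(s_{n_k}),\lambda\,y(s_{n_k}))\to f(t,\lambda\,x(t),\lambda\,y(t))$; passing to the limit gives $f(t,\lambda\,x(t),\lambda\,y(t))\ge\lambda\,f(t,x(t),y(t))$ for every $t\in J$, hence a.e.\ on $I'$ and, after exhausting $I$, a.e.\ on $I$. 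There is no real obstacle here; the only point needing care — and the sole reason the detour through a dense set of times is necessary — is that the exceptional null set in~\ref{S} is permitted to depend on the chosen spatial arguments, so one cannot feed the uncountable family $\{(x(t),y(t))\}_{t\in I}$ into~\ref{S} directly, and must instead propagate the inequality from the countable dense set, absorbing the (one-at-a-time) Lipschitz exceptional sets into a single common null set via compactness of $K$.
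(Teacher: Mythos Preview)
Your proof is correct and follows exactly the approach the paper intends: the paper omits the proof and simply refers the reader to the argument of Lemma~\ref{lem:f(t,a,b)->f(t,x(t),y(t))}, which is precisely what you have reproduced, with the only additions being the harmless reduction to compact subintervals and the explicit bookkeeping of the full-measure set $J_0$ on which $l^K$ is finite.
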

The sublinear character of the skew-product semiflow can be deduced from the sublinearity of all the functions in $E$ as shown in the next result.
\begin{prop}\label{prop:sublinear} Consider $\Phi$  one of the monotone continuous skew-product semiflows given in~\eqref{monosemitau} and assume that $E$ satisfies property~\ref{S}. Then,~\eqref{monosemitau} preserves the positive cone and induces a monotone and sublinear skew-product semiflow
\begin{equation}\label{sublinearsemitau}
\Phi \colon \R^+\times E\times\mathcal{C}^+\to E\times\mathcal{C}^+,\quad (t,f,\phi)\mapsto  \big(f_t, x_t(\cdot,f,\phi)\big)\,.
\end{equation}
\end{prop}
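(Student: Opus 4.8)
The plan is to verify the three assertions of the statement in turn: positive invariance of $\mathcal{C}^+$, the skew-product semiflow character of the restriction, and sublinearity. Monotonicity of~\eqref{sublinearsemitau} will then be immediate, since by hypothesis (and Theorem~\ref{thm:monoton<=>Kamke}) $\Phi$ is already monotone on $E\times\mathcal{C}$ and $E\times\mathcal{C}^+$ is contained in its domain; in particular each $f\in E$ satisfies the Kamke conditions~\ref{Kx} and~\ref{Ky}. The engine of the whole proof is the comparison theorem for quasimonotone Carath\'eodory systems of Walter~\cite{paper:Walter}, applied exactly as in the proof of Theorem~\ref{thm:monoton<=>Kamke}: on each interval $[j,j+1]$ the delay equation reduces to an ordinary Carath\'eodory problem $x'(t)=f(t,x(t),\psi(t-1))$ with $\psi$ already known, and $f(\cdot,\cdot,\psi(\cdot-1))$ is quasimonotone in the first variable by~\ref{Kx}.

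For positive invariance, evaluating~\ref{S} at $\lambda=0$, $x=y=0$ gives $f(t,0,0)\ge0$ for a.e.\ $t$, hence by~\ref{Ky} also $f(t,0,c)\ge0$ for every $c\ge0$. Fix $f\in E$ and $\phi\in\mathcal{C}^+$. On $[0,1]$ the constant $\underline x\equiv0$ satisfies $\underline x_k'(t)=0\le f_k\bigl(t,\underline x(t),\phi(t-1)\bigr)$ for every $k$ and $\underline x(0)=0\le\phi(0)$, so Walter's theorem gives $x(t,f,\phi)\ge0$ on $[0,1]$. Since the delay term $x(t-1,f,\phi)$ is then nonnegative on $[1,2]$, the same argument applies there, and iterating over successive unit intervals yields $x(t,f,\phi)\ge0$ for all $t$, i.e.\ $x_t(\cdot,f,\phi)\in\mathcal{C}^+$ for all $t\ge0$. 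Thus $E\times\mathcal{C}^+$ is positively invariant (and closed, $\mathcal{C}^+$ being closed in $\mathcal{C}$), so the restriction~\eqref{sublinearsemitau} of the continuous skew-product semiflow~\eqref{monosemitau} is again a continuous skew-product semiflow, monotone by restriction.

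It remains to prove sublinearity~\eqref{defi:sub}. Fix $f\in E$, $\phi\in\mathcal{C}^+$, $\lambda\in[0,1]$, and set $u(t)=x(t,f,\phi)$, $v(t)=x(t,f,\lambda\phi)$ (both nonnegative by the previous step) and $w(t)=\lambda u(t)$. On $[0,1]$ one has $w_k'(t)=\lambda f_k\bigl(t,u(t),\phi(t-1)\bigr)\le f_k\bigl(t,\lambda u(t),\lambda\phi(t-1)\bigr)=f_k\bigl(t,w(t),\lambda\phi(t-1)\bigr)$ for every $k$, by Lemma~\ref{lem:S} (applicable because $u,\phi\ge0$); since $w(0)=\lambda\phi(0)=v(0)$, Walter's theorem gives $w(t)\le v(t)$, i.e.\ $\lambda u(t)\le v(t)$ on $[0,1]$. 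On $[1,2]$ one has, again for each $k$,
\[
w_k'(t)=\lambda f_k\bigl(t,u(t),u(t-1)\bigr)\le f_k\bigl(t,\lambda u(t),\lambda u(t-1)\bigr)\le f_k\bigl(t,\lambda u(t),v(t-1)\bigr)=f_k\bigl(t,w(t),v(t-1)\bigr),
\]
the first inequality by Lemma~\ref{lem:S} and the second by~\ref{Ky} using $\lambda u(t-1)\le v(t-1)$ from the previous step; since $w(1)=\lambda u(1)\le v(1)$, comparison gives $\lambda u(t)\le v(t)$ on $[1,2]$. Iterating over unit intervals produces $\lambda x(t,f,\phi)\le x(t,f,\lambda\phi)$ for all $t\ge0$, which, read on $[t-1,t]$ for $t>0$, is exactly~\eqref{defi:sub}; hence~\eqref{sublinearsemitau} is sublinear.

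The only delicate points are the inductive bookkeeping over the successive delay intervals — one must ensure that at each step the frozen delay term is already known and nonnegative, so that the hypotheses of Lemma~\ref{lem:S} and of Walter's comparison theorem are met — and a routine check that Walter's theorem applies verbatim in the present Carath\'eodory, quasimonotone setting, which is already taken for granted in the proof of Theorem~\ref{thm:monoton<=>Kamke}. I do not anticipate any further obstacle.
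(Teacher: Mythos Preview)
Your proof is correct, but the order of the argument is genuinely different from the paper's. You establish positive invariance \emph{first}, by observing that~\ref{S} at $\lambda=0$ together with~\ref{Ky} forces $f(t,0,c)\ge0$ for all $c\ge0$, so that $\underline x\equiv0$ is a subsolution and Walter's comparison applies directly; then sublinearity follows cleanly because $u(t)=x(t,f,\phi)\ge0$ is already available and Lemma~\ref{lem:S} can be invoked without further ado, for every $\phi\in\mathcal{C}^+$. The paper instead proves sublinearity \emph{first}, and only for $\phi\gg0$: since positivity of $x(\cdot,f,\phi)$ is not yet known, it introduces $t_1=\sup\{\tau\in[0,1]\mid x(t,f,\phi)\ge0\text{ on }[0,\tau]\}$, runs the comparison on $[0,t_1]$, and then invokes the strict-order Lemma~\ref{lema:strict} (from $\phi(0)\gg\lambda\phi(0)$) to force $x(t_1,f,\phi)\gg0$ and hence $t_1=1$; positive invariance is only derived afterwards, from sublinearity at $\lambda=0$ and monotonicity. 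Your route is more elementary --- it avoids the continuation argument and Lemma~\ref{lema:strict} entirely, and yields~\eqref{defi:sub} for all $\phi\ge0$ in one stroke --- while the paper's route has the aesthetic feature that positive invariance appears as a corollary of sublinearity rather than as a separate computation. One minor point: on $[1,2]$ your appeal to~\ref{Ky} to compare $f_k(t,\lambda u(t),\lambda u(t-1))$ and $f_k(t,\lambda u(t),v(t-1))$ is really an appeal to Lemma~\ref{lem:f(t,a,b)->f(t,x(t),y(t))}(i), since the arguments depend on $t$; this is the same routine passage you already flagged for~\ref{S} via Lemma~\ref{lem:S}.
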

\begin{proof} The case $\lambda=1$ is trivial so let us fix  $f\in E$, $\lambda\in [0,1)$ and $\phi\gg0$. We firstly show that $x(t, f,\lambda\,\phi)\ge \lambda\, x(t, f,\phi)$ for $t\in[0,1]$.  Take
\[t_1=\sup\{\tau\in[0,1]\mid x(t,f,\phi)\ge 0\text{ for all }t\in[0,\tau]\}\,.\] It is obvious that $t_1>0$. For each $t\in [0,t_1]$ denote by  $v(t)=\lambda \,x(t,f,\phi)$ and $w(t)= x(t, f , \lambda \,\phi)$ and notice that
$w'(t)= g(t,w(t))$
where $g:\R\times \R^N \to \R^N$, $(t,y)\mapsto f(t,y,\lambda\,\phi(t-1))$.
Hence, from~Lemma~\ref{lem:S}
\begin{align*}
v'(t)-g(t,v(t))&= \lambda \,f(t,x(t,f,\phi),\phi(t-1))-g(t,v(t))\leq 0= w'(t)-g(t,w(t))
\end{align*}
a.e. in $[0,t_1]$, which together with  $v(0)=\lambda\,\phi(0)=w(0)$ and Theorem 2 of~\cite{paper:Walter} provides $v(t)\leq w(t)$ for $t\in[0,t_1]$. Next, from Lemma~\ref{lema:strict} and $\phi(0)\gg \lambda\, \phi(0)$  we deduce that $x(t_1,f,\phi)\gg x(t_1,f,\lambda\,\phi)$, which along with $x(t_1,f,\lambda\,\phi)\geq\lambda\, x(t_1,f,\phi)\ge0$ shows that it must be $t_1=1$, and we obtain the claimed inequality. In a recursive way we show the case $t>1$, and the sublinearity of the semiflow follows. Finally, notice that from the monotonicity, if $\phi\geq 0$ we deduce that $x_t(\cdot,f,\phi)\geq x_t(\cdot,f,0)$ and from~~\eqref{defi:sub} with $\lambda=0$ the invariance of the positive cone $\mathcal{C}^+$ is obtained.
\end{proof}
As a final result of this section, we show that under the assumptions of Theorem~\ref{thm:equilibria} with the addition of sublinearity for the monotone continuous skew-product semiflows~\eqref{sublinearsemitau}, further important dynamic information can be drawn upon the system. Indeed, one can single out two invariant subsets of $E$ where either the semicontinuous equilibria provided by Theorem~\ref{thm:equilibria}  are in fact continuous and coincide, or where they determine the forward long-term behavior of the solutions. In order to state and prove the result, let us firstly recall the definition of part metric.
\begin{defn}[Part metric]
Consider the  equivalence relation on $\mathcal{C}^+$ defined by $x\sim y$ if and only if there is $\alpha>0$ such that $\alpha^{-1}x\le y\le \alpha x$. The classes of equivalence in  $\mathcal{C}^+$ are called the \emph{parts of} $\mathcal{C}^+$.\par
If $C$ is a part of $\mathcal{C}^+$, then
\[
p(x,y):=\inf\{\log\alpha\mid \alpha^{-1}x\le y\le \alpha x\},\quad x,y\in C,
\]
defines a metric on $C$ called the \emph{part metric} of $C$.
\end{defn}
It is easy to check that $\Int \mathcal{C}^+=\{\phi\in \mathcal{C}\mid \phi\gg 0\}$ is a part of $\mathcal{C}^+$.
\begin{thm}\label{theoremE+E-}
  Let $\Phi$ be one of the monotone and sublinear continuous skew-product semiflows~\eqref{sublinearsemitau} induced by~\eqref{monosemitau} in the sublinear case. Let $u,\,v\colon E\to\Int\mathcal{C}^+$ be the semicontinuous equilibria provided by {\rm Theorem~\ref{thm:equilibria}} from the semiequilibria $a$, $b\colon E\to\Int\mathcal{C}^+$,  and consider the sets
\begin{align*}
E_-&:=\left\{f\in E\;\left| \begin{array}{l}
 \text{there is a sequence } \,t_n\downarrow-\infty  \text{ and } \\
 \text{a point of strong sublinearity } g \in E
 \end{array}\right. \; \text{ with } \lim_{n\to\infty}f_{t_n}=g\right\}\,,\\
E_+&:=\left\{f\in E\;\left| \begin{array}{l}
 \text{there is a sequence } \,t_n\uparrow +\infty  \text{ and } \\
\text{a point of strong sublinearity } g \in E
 \end{array}\right. \;\text{ with } \lim_{n\to\infty}f_{t_n}=g\right\}\,.
\end{align*}
Then,
\begin{itemize}[leftmargin=25pt,itemsep=2pt]
\item[\rm (i)] $E_-$ and $E_+$ are invariant.
\item[\rm (ii)] Each function $f\in E_-$ is a continuity point for $u$ and $v$ and  $u(f)=v(f)$. In particular, $u$ and $v$ are continuous in $E_-$.
\item[\rm (iii)] For each $f\in E_+$ and  $\phi\in\Int \mathcal{C}^+$
\begin{equation}\label{eq:limitp}
\lim_{t\to\infty}p\big(u(f_t),x_t(\cdot,f,\phi)\big)=\lim_{t\to\infty}p\big(v(f_t),x_t(\cdot,f,\phi)\big)=0.
\end{equation}
In particular, if $(t_n)_\nin$ is such that $t_n\uparrow\infty$ and $\lim_{\nti}f_{t_n}=g\in E$, then
\begin{equation}\label{eq:limitnorm}
\lim_{\nti}\|u(f_{t_n})-x_{t_n}(\cdot,f,\phi)\|_\mathcal{C}=\lim_{\nti}\|v(f_{t_n})-x_{t_n}(\cdot,f,\phi)\|_\mathcal{C}=0.
\end{equation}
\end{itemize}
\end{thm}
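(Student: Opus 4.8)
The plan is to reduce the whole statement to two standard properties of the part metric $p$ on the part $\Int\mathcal C^+$. \emph{First}, along the monotone sublinear semiflow $\Phi$ the map $t\mapsto p\big(x_t(\cdot,f,\psi_1),x_t(\cdot,f,\psi_2)\big)$ is non-increasing for $\psi_1,\psi_2\in\Int\mathcal C^+$: writing $\alpha=e^{p(\psi_1,\psi_2)}\ge1$ so that $\alpha^{-1}\psi_1\le\psi_2\le\alpha\,\psi_1$, monotonicity together with the sublinearity~\eqref{defi:sub} of the semiflow give $\alpha^{-1}x_t(\cdot,f,\psi_1)\le x_t(\cdot,f,\psi_2)\le\alpha\,x_t(\cdot,f,\psi_1)$; the same sandwich, applied with $\psi_1=u(f)$ and $0<\lambda<1<\Lambda$ chosen with $\lambda\,u(f)\le\phi\le\Lambda\,u(f)$, shows that $\Phi$ keeps $\Int\mathcal C^+$ invariant. \emph{Second}, if $g$ is a point of strong sublinearity and $\psi_1\ne\psi_2$ in $\Int\mathcal C^+$, then $p\big(x_2(\cdot,g,\psi_1),x_2(\cdot,g,\psi_2)\big)<p(\psi_1,\psi_2)$: now $\alpha>1$, monotonicity plus~\eqref{defi:strongsub} give $x_2(\cdot,g,\psi_i)\gg\alpha^{-1}x_2(\cdot,g,\psi_j)$ for $\{i,j\}=\{1,2\}$, and a relation $\varphi\gg\mu\,\chi$ between strictly positive continuous functions on the compact interval $[-1,0]$ leaves a uniform multiplicative gap, so $x_2(\cdot,g,\psi_i)\ge\beta^{-1}x_2(\cdot,g,\psi_j)$ with $\beta<\alpha$, hence the claim. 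I will also use that $p$ is continuous on $\Int\mathcal C^+$ for uniform convergence (ratios of uniformly convergent functions bounded away from $0$ converge uniformly) and that by Theorem~\ref{thm:equilibria} one has $u\ge a$ and $v\ge u$, so $u,v$ indeed take values in $\Int\mathcal C^+$. Part~(i) is then immediate from the base-flow invariance of $E$: if $t_n\downarrow-\infty$ and $f_{t_n}\to g$ with $g$ a point of strong sublinearity, then for each $s\in\R$ the sequence $t_n-s\downarrow-\infty$ satisfies $(f_s)_{t_n-s}=f_{t_n}\to g$, so $f_s\in E_-$; and likewise for $E_+$.

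For~(ii), fix $f\in E_-$ and put $q(s)=p\big(u(f_s),v(f_s)\big)$, finite since $u(f_s),v(f_s)$ lie in the single part $\Int\mathcal C^+$. As $u,v$ are equilibria, $q(s)=p\big(x_{s-\sigma}(\cdot,f_\sigma,u(f_\sigma)),x_{s-\sigma}(\cdot,f_\sigma,v(f_\sigma))\big)\le q(\sigma)$ for $\sigma\le s$, so $q$ is non-increasing and $q(s)\to q_\infty:=\sup_\R q\in[0,\infty]$ as $s\to-\infty$. Choose $t_n\downarrow-\infty$ with $f_{t_n}\to g$ and, using the lower-semicontinuity of $u$ and the upper-semicontinuity of $v$, pass to a subsequence with $u(f_{t_n})\to\phi_u$ and $v(f_{t_n})\to\phi_v$ in $\mathcal C$, where $u(g)\le\phi_u\le\phi_v\le v(g)$, so $\phi_u,\phi_v\gg0$. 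Continuity of $p$ gives $q_\infty=\lim q(t_n)=p(\phi_u,\phi_v)<\infty$; since $t_n+2\downarrow-\infty$, continuity of $\Phi$ and of $p$ yield $q_\infty=\lim q(t_n+2)=\lim p\big(x_2(\cdot,f_{t_n},u(f_{t_n})),x_2(\cdot,f_{t_n},v(f_{t_n}))\big)=p\big(x_2(\cdot,g,\phi_u),x_2(\cdot,g,\phi_v)\big)$. Thus $p(\phi_u,\phi_v)=p\big(x_2(\cdot,g,\phi_u),x_2(\cdot,g,\phi_v)\big)$, which by the second property forces $\phi_u=\phi_v$; hence $q_\infty=0$, $q\equiv0$, and $u(f)=v(f)$. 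Continuity of $u,v$ at such $f$ follows: for $g_n\to f$ the sets $\{u(g_n)\}$ and $\{v(g_n)\}$ are relatively compact, and any joint subsequential limit $(\psi,\chi)$ satisfies $u(f)\le\psi\le\chi\le v(f)=u(f)$, so $\psi=\chi=u(f)$; as every subsequence has such a further subsequence, $u(g_n)\to u(f)$ and $v(g_n)\to v(f)$.

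For~(iii), fix $f\in E_+$ and $\phi\in\Int\mathcal C^+$ and set $r(t)=p\big(u(f_t),x_t(\cdot,f,\phi)\big)$; with $\lambda,\Lambda$ as above, $\lambda\,u(f_t)\le x_t(\cdot,f,\phi)\le\Lambda\,u(f_t)$, so $x_t(\cdot,f,\phi)\in\Int\mathcal C^+$, $r$ is finite, and $r$ is non-increasing by the first property (with $\psi_1=u(f)$, $\psi_2=\phi$, using $u(f_t)=x_t(\cdot,f,u(f))$); let $r_\infty:=\lim_{t\to\infty}r(t)$. Take $t_n\uparrow\infty$ with $f_{t_n}\to g$ a point of strong sublinearity. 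The orbit segments $x_{t_n}(\cdot,f,\phi)$ are bounded by the sandwich and relatively compact in $\mathcal C$ (equicontinuity of bounded trajectories in the present setting), so, with the lower-semicontinuity of $u$, a subsequence has $u(f_{t_n})\to\phi_u\gg0$ and $x_{t_n}(\cdot,f,\phi)\to\chi\gg0$. Arguing as in~(ii), with $\chi$ in place of $\phi_v$, gives $r_\infty=p(\phi_u,\chi)=p\big(x_2(\cdot,g,\phi_u),x_2(\cdot,g,\chi)\big)$, hence $r_\infty=0$; replacing $u$ by $v$ proves the second equality in~\eqref{eq:limitp}. For~\eqref{eq:limitnorm}, if $t_n\uparrow\infty$ and $f_{t_n}\to g\in E$, then $\{u(f_{t_n})\}$ is bounded by lower-semicontinuity, and from $e^{-\varepsilon_n}u(f_{t_n})\le x_{t_n}(\cdot,f,\phi)\le e^{\varepsilon_n}u(f_{t_n})$ with $\varepsilon_n:=p\big(u(f_{t_n}),x_{t_n}(\cdot,f,\phi)\big)\to0$ we get $\|u(f_{t_n})-x_{t_n}(\cdot,f,\phi)\|_{\mathcal C}\le(e^{\varepsilon_n}-1)\|u(f_{t_n})\|_{\mathcal C}\to0$, and likewise for $v$.

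The delicate point is the compactness input needed in~(ii) and~(iii), namely that bounded semi-orbits are relatively compact in $\mathcal C$: in the cases with $L^1_{loc}$-equicontinuous $m$-bounds this is the uniform modulus of continuity of solutions (Remark~\ref{rmk:mod-cont-m-bounds} together with the base-flow invariance of $E$), whereas in the settings of Theorems~\ref{thm:K_xK_y+lipschitz=>cont skew-prod} and~\ref{thm:K_xy=>cont skew-prod}\,[(i)--(ii)] it must be recovered from the componentwise equicontinuity estimates used there (cf.~\eqref{desi:gnhn} and~\eqref{eq:desi3.16}); once this and the continuity of $\Phi$ are in hand, the rest is bookkeeping with the part metric and the semicontinuity of $u$ and $v$.
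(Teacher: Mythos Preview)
Your proof is correct and follows essentially the same route as the paper: both arguments hinge on the non-increase of the part metric along the sublinear semiflow and its strict contraction after time $2$ at a point of strong sublinearity, combined with the semicontinuity of $u,v$ from Theorem~\ref{thm:equilibria} and the relative compactness of bounded orbit segments (justified, as you note, either by the $L^1_{loc}$-equicontinuous $m$-bounds or by the componentwise estimates~\eqref{desi:gnhn} and~\eqref{eq:desi3.16}). Your presentation is in places slightly cleaner---you avoid the gap condition $t_{n-1}-t_n>2$ by using directly that $t_n+2\to\pm\infty$, and you derive~\eqref{eq:limitnorm} from the elementary bound $\|x-y\|_{\mathcal C}\le(e^{p(x,y)}-1)\|y\|_{\mathcal C}$ rather than citing Krause--Nussbaum---but these are cosmetic differences, not a genuinely different approach.
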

\begin{proof}
(i) Let $f\in E_-$ and $s\in\R$. If we consider the sequence $(t_n-s)_{\nin}$, it is immediate to check that $\lim_{n\to\infty}(t_n-s)=-\infty$ and  $\lim_{n\to\infty} (f_s)_{t_n-s}=g$,  so that $f_s\in E_-$.\par\smallskip
(ii) Let $f\in E_-$. Then, there exists a sequence $t_n\downarrow -\infty$, which can be assumed to satisfy $t_n<0$ and $t_{n-1}-t_n>2$ for each $n\in\N$,  and a point of strong sublinearity $g\in E$ such that $\lim_{n\to\infty}f_{t_n}=g$. First we check that $u(f)=v(f)$. Since $u$ is a lower-semicontinuos equilibrium and $v$ an upper-semicontinuos equilibrium, up to a subsequence, there are $\widetilde u$ and $\widetilde v\in E$ such that
\begin{equation*}
0\ll u(g)\leq \widetilde u=\lim_{n\to\infty}u\big(f_{t_n}\big)\leq \lim_{n \to\infty}v\big(f_{t_n}\big)=\widetilde v\leq v(g)\,.
\end{equation*}
First notice that since $u$ and $v$ are equilibria, and the part metric is decreasing along the trajectories because of the sublinearity of the semiflow
(see~\cite[Lemma 4.2.1(i)]{book:chue}), we deduce that
\begin{equation}\label{eq:decrease}
p(u(f_t),v(f_t))\ge p(u(f_s),v(f_s)) \quad \text{ whenever } \;t \leq s\,.
\end{equation}
Next we claim that $p(\widetilde u,\widetilde v)=0$. Otherwise, since $g$ is a point of  strong sublinearity (see~\eqref{defi:strongsub}), the contracting property under the part metric for the trajectory (see~\cite[Lemma 4.2.1(ii)]{book:chue}), the continuity of the semiflow, the inequalities $t_{n}+2 <t_{n-1}$ for each $n\in\N$ and~\eqref{eq:decrease} provide
\begin{align*}
p(\widetilde u,\widetilde v)& >p(x_2(\cdot,g,\widetilde u)), x_2(\cdot,g,\widetilde v))=\lim_{n\to\infty} p(x_2(\cdot,f_{t_n},u(f_{t_n})), x_2(\cdot,f_{t_n},v(f_{t_n})))\\ & = \lim_{n\to\infty} p(u(f_{t_n+2}),v(f_{t_n+2}))\ge \lim_{n\to\infty} p(u(f_{t_{n-1}}),v(f_{t_{n-1}}))=p(\widetilde u,\widetilde v)\,,
\end{align*}
a contradiction, and $p(\widetilde u,\widetilde v)=0$, as claimed. Finally, again from~\eqref{eq:decrease} and $t_n<0$ we deduce that
$0\leq p(u(f),v(f))\leq p(u(f_{t_n}),v(f_{t_n}))$, which as $n$ goes to $\infty$ yields $0\leq p(u(f),v(f))\leq p(\widetilde u,\widetilde v)=0$, that is, $u(f)=v(f)$, as stated.\par\smallskip
Now we check that each $f\in E_-$ is a continuity point for $u$ and $v$. Again, from the lower- and upper-semicontinuity of the equilibria  and Theorem 4.4(v), we deduce that for each sequence $(f_n)_{n\in\N}$  converging to $f$, there is a subsequence $(f_{n_k})_{k\in\N}$ such that
\[
u(f)\leq\lim_{k\to\infty}u(f_{n_k})\leq \lim_{k\to\infty}v(f_{n_k})\leq v(f)\,.
\]
Thus, from $u(f)=v(f)$ we conclude that $\displaystyle \lim_{k\to\infty}u(f_{n_k})=\lim_{k\to\infty}v(f_{n_k})=u(f)=v(f)$, and the continuity of $u$ and $v$ in $E_-$ is obtained.\par\smallskip
(iii) We will show the results for $u$ because the corresponding ones for $v$ are analogous. Let $f\in E_+$ and $\phi\gg 0$. As in~\eqref{eq:decrease}, from the sublinear character of the semiflow, the function $P(t)=p(u(f_t), x_t(\cdot,f,\phi))$ is positive and decreasing in $t>0$, that is,  $P(t)\geq P(s)\geq 0$ whenever $0<t\leq s$, and hence there exists the limit as $t\uparrow\infty$. We check that it is 0 as claimed.\par\smallskip
 Since $f\in E_+$, there is a sequence $t_n\uparrow \infty$ which can be assumed to satisfy $t_n>0$ and $t_{n+1}-t_n>2$ for each $n\in\N$,  and a point of strong sublinearity  $g\in E$ such that $\lim_{n\to\infty}f_{t_n}=g$. Again, from the lower-semicontinuity of $u$ there is a subsequence, we will take the whole one, and $\widetilde u\in E$ satisfying
 $0\ll u(g)\leq \widetilde u=\lim_{n\to\infty}u\big(f_{t_n}\big)$. \par\smallskip
 We assume that $p(u(f),\phi)=P(0)>0$, otherwise $P$ would vanish on $\R^+$ and the claim is trivially true. Then, there is an $\alpha>1$ such that $\alpha^{-1}u(f)\leq \phi\leq \alpha\,u(f)$, and the monotonicity of the semiflow provides
 \begin{equation}\label{eq:desimonot}
  x_t(\cdot,f,\alpha^{-1}u(f))\leq x_t(\cdot,f,\phi)\leq x_t(\cdot,f,\alpha\,u(f))\,.
 \end{equation}
 Moreover, from the sublinearity of the semiflow, since $\alpha^{-1}<1$ we deduce that
 \begin{align*}&\alpha^{-1} u(f_t)=\alpha^{-1}x_t(\cdot,f,u(f))\leq x_t(\cdot,f,\alpha^{-1} u(f))\quad \text{ and }  \\
 & x_t(\cdot,f,\alpha\,u(f))\leq \alpha\,x_t(\cdot,f,u(f))=\alpha\,u(f_t)\,,
 \end{align*}
 which, together with~\eqref{eq:desimonot} and evaluating at $t_n$, show that $\{x_{t_n}(\cdot,f,\phi)\mid \nin\}$ is bounded because
 \[ \alpha^{-1} u(f_{t_n})\leq x_{t_n}(\cdot, f,\phi)\leq \alpha\,u(f_{t_n})\]
 and $u(f_{t_n})\to \widetilde u$ as $n\uparrow \infty$. We omit the proof of the equicontinuity because follows the same arguments of Theorem~\ref{thm:equilibria} for proving that  $\{u(f_n)\mid \nin\}$ was equicontinuous. Consequently,  $\{x_{t_n}(\cdot,f,\phi)\mid \nin\}$ is a relatively compact set and there is a convergent subsequence. For simplicity of notation we will assume that the sequence itself converges, and  let denote the limit by $\widetilde x$. We claim that $p(\widetilde u,\widetilde x)=0$.  Otherwise, the same arguments of (ii) together with $t_n+2 < t_{n+1}$ provide now the chain of inequalities
\begin{align*}
p(\widetilde u,\widetilde x) &> p(x_2(\cdot,g,\widetilde u), x_2(\cdot,g,\widetilde x)) =\lim_{n\to\infty} p(x_2(\cdot,f_{t_n},u(f_{t_n})),x_2(\cdot,f_{t_n},x_{t_n}(\cdot,f,\phi))\\
& =\lim_{n\to\infty} P(t_n+2)\geq \lim_{n\to\infty} P(t_{n+1})=p(\widetilde u,\widetilde x)\,,
\end{align*}
and hence, $p(\widetilde u,\widetilde x)=0$ and $\displaystyle \lim_{t\to\infty} P(t)=0$, as stated.
Finally,~\eqref{eq:limitnorm} follows from~\eqref{eq:limitp}, the relation (see Krause and Nussbaum~\cite[Lemma 2.3(ii)]{paper:KrNu})
\[\|x-y\|\leq\left(2\,e^{p(x,y)}-e^{-p(x,y)}-1\right)\,\min(\|x\|,\|y\|)\,,\]
and the boundedness of $\{u(f_{t_n})\mid n\in\N\}$ and $\{x_{t_n}(\cdot,f,\phi)\mid \nin\}$.
\end{proof}
\section{Some applications}\label{sec:models}   In this section, we will show the importance of the applications of our
theory to the study of nonautonomous Carath\'{e}odory ordinary and delay cooperative
systems of equations arising in several applied sciences.
\subsection{Scalar model for population dynamics}
The use of scalar differential equations with constant delay to model the dynamics of a population is extensive and effective (see Brauer and Ch\'{a}vez \cite{book:BC} and Smith \cite{book:S}).  We will study a model in population biology given by scalar delay Carath\'{e}odory equations of the form
\begin{equation}\label{eq:main}
x'(t)=-\alpha(t)\,x(t)+h(t,x(t-1))\,,
\end{equation}
which will be compared with the linear Carath\'eodory ones
\begin{equation}\label{eq:lineal}
y'(t)=-\alpha(t)\,y(t)+\beta(t)\,y(t-1)\,+\gamma(t)\,.
\end{equation}
Equations like \eqref{eq:main} include for example Nicholson's model, Mackey-Glass's model and similar, which have been studied in depth in mathematical biology. First we state the assumptions to be considered for the families of equations \eqref{eq:main} and \eqref{eq:lineal}.
\begin{enumerate}[label=\upshape(\textbf{A\arabic*}),series=scalar,leftmargin=27pt,itemsep=2pt]
\item\label{A1}
$E_1$ is a closed invariant and bounded subset of functions $\alpha\in L^1_{loc}$  such that $\alpha(t)\ge 0$ for a.e. $t\in\R$ and the null function $\alpha=0$ does not belong to~$E_1$. Let $D$ be a countable dense subset of $\R$ and consider  the subset of $\LC$  given~by
\[
E_2=\left\{h\in\LC\,\Bigg|\,\begin{array}{l}
h \text{ satisfies~\ref{K2}, } h_0(t):=h(t,0)\in E_1 \\[.1cm]
\text{and } h(t,y)=h(t,0)\text{ for }y\le0
\end{array}\right\}.
\]
Notice that $E_2$ is invariant and closed with respect to the topology $\T_D$.
\item\label{A3} $E$ is a subset of
\[
\left\{f=(h,\alpha,\beta,\gamma)\,\Bigg|\,\begin{array}{l}
h\in E_2,\; \alpha,\beta,\gamma\in E_1\text{ and } \\[.1cm]
h(t,y)\leq \beta(t)\,y+\gamma(t),\, \forall y\in\R, \text{ a.e.} \, t\in\R
\end{array}\right\}
\]
 which is invariant and closed for the product topology.
\item\label{A4} There are positive constants $K$, $\delta>0$ such that for each $f=(h,\alpha,\beta,\gamma)\in E$
\[\|T(t,f)\|\leq K\,e^{-\delta \,t} \quad \text{ for each }t>0\,,\]
where $T(t,f)$ is the evolution operator on $C([-1,0],\R)$ for~\eqref{eq:linealhomo}, that is, $T(t,f)\,\phi=x_t(\cdot,f,\phi)$ is the unique solution of the linear equation
\begin{equation}\label{eq:linealhomo}
z'(t)=-\alpha(t)\,z(t)+\beta(t)\,z(t-1)
\end{equation}  with initial data $\phi\in C([-1,0],\R)$.
\end{enumerate}
Under these assumptions, the functions defining equations~\eqref{eq:main} and~\eqref{eq:lineal}, that is,  $g(t,x,y)=\alpha(t)\,x+h(t,y)$ and $\widehat g(t,x,y)=\alpha(t)\,x+\beta(t)y+\gamma(t)$,  satisfy conditions~\ref{Kx},~\ref{Ky} and~\ref{Lx}.  Therefore,  from Theorem~\ref{thm:monoton<=>Kamke} and Theorem~\ref{thm:K_xK_y+lipschitz=>cont skew-prod} we deduce that the skew-product semiflows:
\begin{equation}\label{semiflowh}
\Phi\colon \R^+\times E\times\mathcal{C}^+\to E\times\mathcal{C}^+,\quad (t,f,\phi)\mapsto  \big(f_t, x_t(\cdot,f,\phi)\big)\,,
\end{equation}
\begin{equation}\label{semiflowlinear}
\Psi \colon \R^+\times E\times\mathcal{C}^+\to E\times\mathcal{C}^+,\quad (t,f,\phi)\mapsto  \big(f_t, y_t(\cdot,f,\phi)\big)\,,
\end{equation}
 are monotone and continuous for the above product topology.
\begin{rmk}\label{fundamental} Let $f=(h,\alpha,\beta,\gamma)\in E$. We denote by $U_f(t,s)$ the so called \emph{fundamental matrix}  of~\eqref{eq:linealhomo} (scalar in this case, see~\cite[Theorem 2.1]{book:HALE}), that is,
\begin{align}\nonumber
&\frac{d}{dt} U_f(t,s)=-\alpha(t) \,U_f(t,s)+\beta(t)\, U_f(t-1,s),  \;\text{ if } t\geq s  \text{ and a.e. in } s \text{ and } t\,. \\[.1cm]
& U_f(s,s)=1 \text{ and }  U(t,s)=0 \text{ for } s-1\leq t<s\,.\label{initialc}
\end{align}
Moreover, it is also assumed that $U_f(t,s)=0$ whenever $t<s$.
Notice that for each $f=(h,\alpha,\beta,\gamma)\in E$ and assumption~\ref{A4} we have
\begin{equation}\label{eq:inefun}
|U_f(t,s)|\leq K\, e^{-\delta\,(t-s)} \quad \text{ whenever }t\geq s\,.
\end{equation}
 The reason is that if we change the initial condition~\eqref{initialc} to the constant function 1 on $[s-1,s]$,  and we denote this solution by $z(t,f,s,1)$, it is easy to check that \[U_f(t,s)\leq z(t,f,s,1)=z(t-s,f_s,1)\,,\]
 and~\ref{A4} proves the claim.
\end{rmk}
\begin{prop}\label{equilibrios}
Under assumptions~\ref{A1}--\ref{A4} and the notation of\/ \rm{Remark~\ref{fundamental}},
\begin{itemize}[leftmargin=20pt]
\item[\rm(i)] the function
$\widetilde b\colon E\to\R\,,$ $f\mapsto \widetilde b(f)=\int_{-\infty}^0 U_f(0,s)\,\gamma(s)\,ds$
is well defined and continuous,
\item[\rm(ii)] the  function $b\colon E\to \mathcal{C}^+\,,$ $f\mapsto b(f)$,  defined as $b(f)(u)=\widetilde b(f_u)$ for each $u\in [-1,0]$,  is a continuous equilibrium for~\eqref{semiflowlinear} and a continuous super-equilibrium for~\eqref{semiflowh},
\item[\rm(iii)] the function
$\widetilde a\colon E\to\R\,,$ $f\mapsto \widetilde a(f)=\int_{-\infty}^0 \exp\big(-\int_s^0 \alpha(r)\,dr\big)\,h_0(s)\,ds$, with $h_0$ defined in~\ref{A1},
is well defined and continuous,
\item[\rm(iv)] the  function $a\colon E\to \mathcal{C}^+\,,$ $f\mapsto a(f)$,  defined as $a(f)(u)=\widetilde a(f_u)$ for each $u\in [-1,0]$,  is a continuous sub-equilibrium for~\eqref{semiflowh}, and
\end{itemize}
they satisfy $0\ll a(f)\leq b(f)$ for each $f\in E$.
\end{prop}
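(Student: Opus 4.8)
The plan is to run everything off the exponential estimate $|U_f(t,s)|\le Ke^{-\delta(t-s)}$ of \eqref{eq:inefun} together with the following uniform mass bound: since $E_1$ is bounded and base‑flow invariant, every translate of a component of an $f\in E$ again lies in $E_1$, so there is $M>0$, independent of $f$, with $\int_{-k-1}^{-k}|\gamma|\,ds\le M$ and $\int_{-k-1}^{-k}|h_0|\,ds\le M$ for all $f\in E$ and $k\in\N$. First I would record the two ``no‑delay'' comparisons: from $\beta\ge0$ and $U_f\ge0$ (positivity preservation for the cooperative homogeneous equation \eqref{eq:linealhomo}) one gets, for $s\le 0$, $0\le\exp\!\big(-\!\int_s^0\alpha\big)\le U_f(0,s)$, and hence also $\exp\!\big(-\!\int_s^0\alpha\big)\le Ke^{\delta s}$. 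Splitting $\int_{-\infty}^0$ over the unit intervals $[-k-1,-k]$ and summing $\sum_k Ke^{-\delta k}M$ gives the absolute convergence of the integrals defining $\widetilde b(f)$ and $\widetilde a(f)$, which is the well‑definedness in (i) and (iii).

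For continuity I would use the same splitting, the tail being bounded by $KM\sum_{k\ge\lfloor\tau\rfloor}e^{-\delta k}$, hence below any prescribed $\ep$ for $\tau$ large, \emph{uniformly} over $f\in E$; so it suffices to prove continuity of the finite windows $\int_{-\tau}^0$. For $\widetilde b$ I would change variables to identify $\int_{-\tau}^0 U_f(0,s)\gamma(s)\,ds=y(\tau,f_{-\tau},0)$, the solution of the linear delay equation \eqref{eq:lineal} from the zero history evaluated at time $\tau$, and then invoke continuity of $(t,f)\mapsto f_t$ and of the solution map (established in the paragraph preceding the statement and in Theorem~\ref{thm:K_xK_y+lipschitz=>cont skew-prod}, since the vector field of \eqref{eq:lineal} satisfies \ref{Kx}, \ref{Ky} and \ref{Lx}). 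For $\widetilde a$ the finite window is the pure quadrature $\int_{u-\tau}^u\exp\!\big(-\!\int_w^u\alpha\big)h_0(w)\,dw$, handled by a direct estimate: convergence of $\alpha_n$ makes the exponential factors (which are bounded by $1$) converge uniformly on the window, while convergence of $h_{0,n}$ — available because $D$ contains a negative point $y_0$ with $h_n(\cdot,y_0)=h_{0,n}$ by \ref{A1} — controls the rest. Thus $\widetilde a,\widetilde b\in C(E,\R)$, and since $a(f)(u)=\widetilde a(f_u)$, $b(f)(u)=\widetilde b(f_u)$ are compositions with the continuous base flow over the compact interval $[-1,0]$, they define continuous maps $E\to\mathcal{C}$ with nonnegative values (all integrands being nonnegative).

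For the (semi)equilibrium statements I would use $U_{f_t}(0,s)=U_f(t,t+s)$ to rewrite $b(f)(u)=\int_{-\infty}^u U_f(u,w)\gamma(w)\,dw$ and differentiate under the integral sign — licit by the domination above and the a.e.\ identity $\partial_t U_f(t,s)=-\alpha(t)U_f(t,s)+\beta(t)U_f(t-1,s)$ of Remark~\ref{fundamental} — so that $t\mapsto\widetilde b(f_t)$ solves \eqref{eq:lineal} with history $b(f)$; uniqueness then gives $y_t(\cdot,f,b(f))=b(f_t)$, so $b$ is an equilibrium of $\Psi$. Because $h(t,y)\le\beta(t)y+\gamma(t)$ by \ref{A3}, the solution $x(\cdot,f,b(f))$ of \eqref{eq:main} is a subsolution of \eqref{eq:lineal} with the same history, so a comparison argument for cooperative delay equations (Walter~\cite[Theorem~2]{paper:Walter}, iterated on successive unit intervals exactly as in the proof of Theorem~\ref{thm:monoton<=>Kamke}) yields global existence together with $x_t(\cdot,f,b(f))\le b(f_t)$ — here $h\ge0$ (a consequence of \ref{K2} and $h_0\ge0$) provides the lower barrier — so $b$ is a super‑equilibrium of $\Phi$. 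Symmetrically, $\eta(t):=\widetilde a(f_t)$ solves $\eta'=-\alpha\eta+h_0$, is nonnegative, and satisfies $h_0(t)=h(t,0)\le h(t,\eta(t-1))$ by \ref{K2}, so $\eta$ is a subsolution of \eqref{eq:main} with history $a(f)$; the same comparison gives $a(f_t)\le x_t(\cdot,f,a(f))$, i.e.\ $a$ is a sub‑equilibrium of $\Phi$. This proves (ii) and (iv).

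Finally, $\widetilde a(g),\widetilde b(g)\ge0$ for all $g\in E$ by positivity of the integrands; evaluating \ref{A3} at $y=0$ gives $h_0\le\gamma$, which combined with $\exp(-\int_s^0\alpha)\le U_g(0,s)$ yields $\widetilde a(g)\le\widetilde b(g)$, hence $a(f)\le b(f)$; and $\widetilde a(g)>0$ because $h_0\in E_1$ is nonnegative and not a.e.\ zero while the exponential factor is strictly positive, so $a(f)=\widetilde a(f_\cdot)$ is a continuous, strictly positive function on $[-1,0]$, i.e.\ $a(f)\gg0$, whence $b(f)\gg0$ too. The step I expect to be the main obstacle is the continuity of $\widetilde b$: unlike $\widetilde a$, it genuinely involves the fundamental matrix of the delay equation, so it has to be obtained by gluing the finite‑window continuous‑dependence theorem to the uniform exponential tail furnished by \ref{A4}, with care that this reduction is compatible with whichever (strong or weak) product topology $E$ actually carries.
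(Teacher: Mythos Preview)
Your argument is correct, and your route to the continuity of $\widetilde b$ is genuinely different from the paper's. The paper invests a page in describing $U_f(0,s)$ explicitly --- formula \eqref{eq:U s< t+1} for $s\in[-1,0]$ and the cocycle identity \eqref{eq:s> t+1}--\eqref{phins} for $s\le -1$ --- in order to prove that $(s,f)\mapsto U_f(0,s)$ is jointly continuous, and then estimates the finite window by the splitting $U_{f_n}\gamma_n-U_f\gamma=(U_{f_n}-U_f)\gamma_n+U_f(\gamma_n-\gamma)$. Your identification $\int_{-\tau}^0 U_f(0,s)\gamma(s)\,ds=y(\tau,f_{-\tau},0)$ bypasses all of this: the truncated integral is directly a solution value of the inhomogeneous linear problem, so its continuity in $f$ follows from continuity of the base flow together with Theorem~\ref{thm:K_xK_y+lipschitz=>cont skew-prod} applied to $\widehat g(t,x,y)=-\alpha(t)x+\beta(t)y+\gamma(t)$. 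This is shorter and keeps the fundamental solution in the background; the paper's approach, on the other hand, makes the continuity of $U_f(0,s)$ itself available, which is of some independent interest even if not needed here. Your trick of extracting $h_{0,n}\to h_0$ in $L^1_{loc}$ by evaluating at a negative $y_0\in D$ (exploiting the clause $h(t,y)=h(t,0)$ for $y\le 0$ in \ref{A1}) is also a nice shortcut that the paper does not spell out.

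One point you could tighten: for $a(f)\gg 0$ you need $\widetilde a(f_u)>0$ for \emph{every} $u\in[-1,0]$, i.e.\ that $h_0$ is not a.e.\ zero on $(-\infty,u]$. ``$h_0\in E_1$ not a.e.\ zero'' alone does not give this; the argument (which the paper only hints at) is that if $h_0$ vanished a.e.\ on a left half-line then $(h_0)_{-n}\to 0$ in $L^1_{loc}$, forcing $0\in E_1$ by closedness and invariance of $E_1$, contrary to \ref{A1}. With that addition your proof is complete.
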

\begin{proof}
(i) Notice that~\ref{A1} implies that the set $\{\gamma_t\mid \gamma\in E_1, t\in\R\}$ is $L^1_{loc}$-bounded, so there is a constant $C\geq 0$ such that \begin{equation}\label{eq:boundL1loc}
\int_t^{t+1} \gamma(s)\,ds\leq C\quad \text{ for each } t\in\R \text{ and } \gamma\in E_1\,.
\end{equation}
 From this together with~\eqref{eq:inefun}  we deduce that for each $f=(h,\alpha,\beta,\gamma)\in E$
\[
\left| \int_{-\infty}^0 U_f(0,s)\,\gamma(s)\,ds\right|\leq \sum_{j=1}^{\infty} \int_{-j}^{-j+1} \!\!K\,e^{\delta\,s}\gamma(s)\ ds\leq
\sum_{j=1}^{\infty} K C e^{(-j+1)\,\delta}= K C \frac{e^{\delta}}{e^\delta-1}\,,
\]
so that  the integral is well defined, and since the bound is independent of $f$,
we deduce that given $\ep>0$ there is a $\tau_\ep>0$ such that
\begin{equation}\label{eq:inetau}
\left| \int_{-\infty}^{-\tau_\ep} U_f(0,s)\,\gamma(s)\,ds\right| <\ep \quad \text{ for each } f=(h,\alpha,\beta,\gamma)\in E\,.
\end{equation}
Before proving the continuity of $\widetilde b$, we will study some properties of $U_f(0,s)$. From~\eqref{initialc}, if $s\leq t\leq s+1$ we deduce that $U_f(t,s)$ is the solution of the Carath\'{e}odory ordinary differential problem
$z'(t)=-\alpha(t)\,z(t)$, $z(s)=1$, i.e.
\begin{equation}\label{eq:U s< t+1}
U_f(t,s)=\exp\left(-\int_s^t \alpha(r)\,dr\right) \quad\text{ whenever } s\leq t\leq s+1\,.
\end{equation}
As in Remark~\ref{fundamental}, we denote by $z(t,f,\phi)$ the solution of
\[\begin{cases} z'(t)=-\alpha(t)\,z(t)+\beta(t)\,z(t-1)\,,\\
z(t)=\phi(t),\quad t\in[-1, 0]\,,
\end{cases} \text{for } f=(h,\alpha,\beta,\gamma)\in E, \text{ and } \phi\in \mathcal C([-1,0])\,.
\]
Since the cocycle property for $z_t(\cdot,f, \phi)$ can be also applied to the non-continuous initial data $\phi_0\colon [-1,0]\to\R$,
defined as $\phi_0(t)=0$ if $t\in[-1,0)$ and $\phi_0(0)=1$, we deduce that if $t\geq s+1$
\begin{equation}\label{eq:s> t+1}
U_f(t,s)=z(t-s,f_s,\phi_0)= z(t-s-1, f_{s+1},z_1(\cdot, f_s,\phi_0))\,,
\end{equation}
where now, from the definition of $\phi_0$, we have that $z(1+u, f_s,\phi_0)$  is the solution of the Carath\'{e}odory ordinary differential equation  $z'(t)=-\alpha(t+s) \,z(t)$ with initial data $z(0)=1$, and hence,
\begin{equation}\label{phins}
\phi(f_s)(\tau):=z_1(\tau, f_s,\phi_0)=\exp\left(-\int_0^{1+\tau}\alpha(s+r)\,dr\right), \tau\in[-1, 0]\,.
\end{equation}
Therefore, from~\eqref{eq:U s< t+1},~\eqref{eq:s> t+1} and~\eqref{phins} we deduce that
\begin{equation}\label{eq:Ufn(0,s)}
U_f(0,s)=\begin{cases}  \exp\big(-\int_s^0 \alpha(r)\,dr\big) & \text{if } \;s\in[-1,0]\,, \\[.1cm]
                               z(-s-1, f_{s+1},\phi(f_s)) & \text{if }\; s\leq -1\,.
\end{cases}
\end{equation}
From Theorem~\ref{thm:K_xK_y+lipschitz=>cont skew-prod},  the map
$\,\R^-\times E\times \mathcal C([-1,0])\to\R, \;(s,g,\phi)\mapsto z(-s,g,\phi))\,$ is continuous,
which, together with~\eqref{eq:Ufn(0,s)} and the composition with some continuous functions, show the continuity of
\begin{equation}\label{Uf0s}
\R^-\times E\to \R,\; (s,f)\mapsto U_f(0,s)\,.
\end{equation}
To check the continuity of $\widetilde b$, we consider a sequence $(f_n)_{\nin}=(h_n,\alpha_n,\beta_n,\gamma_n)_\nin$ on $E$ converging to some $f=(h,\alpha,\beta,\gamma)\in E$ for the product topology and we will  check that $\widetilde b(f_n)$ tends to $\widetilde b(f)$ as $n\uparrow \infty$. Given $\ep >0$,  we consider $\tau_\ep$  satisfying~\eqref{eq:inetau} and we denote by $I_\ep=\sup_{\gamma\in E_1}\int_{-\tau_\ep}^0 \gamma(s)\,ds$. From the definition of $E_1$ given on~\ref{A1} we deduce that $0<I_\ep <\infty$. Moreover,  notice that the restriction of~\eqref{Uf0s} to the compact subset $[-\tau_\ep,0]\times \big(\{f_n\mid \nin\}\cup\{f\}\big)\subset \R^-\times E$ provides
\[\lim_{n\to\infty}U_{f_n}(0,s)= U_f(0,s) \quad \text{ uniformly for } s\in[-\tau_\ep, 0]\,.\]
In addition, $\lim_{n\to\infty} \gamma_n=\gamma$ in $L^1_{loc}$, so that there is an $n_0$ such that for each $n\ge n_0$
\[ \sup_{s\in[-\tau_\ep,0]} |U_{f_n}(0,s)-U_f(0,s)| < \frac{\ep}{I_\ep} \quad \text{ and }\; \int_{-\tau_\ep}^0 |\gamma_n(s)-\gamma(s)|<\ep\,.\]
As a consequence, together with~\eqref{eq:inefun},~\eqref{eq:boundL1loc},~\eqref{eq:inetau} and
\[U_{f_n}(0,s)\,\gamma_n(s)- U_f(0,s)\,\gamma (s)= (U_{f_n}(0,s)-U_f(0,s))\,\gamma_n(s) + U_f(0,s)\,(\gamma_n(s)-\gamma(s))\,,\]
we obtain that for each $n\ge n_0$
\begin{align*}
|\widetilde b(f_n)-\widetilde b(f)|&\leq 2\,\ep +  \int_{-\tau_\ep}^0 \left|U_{f_n}(0,s)\,\gamma_n(s)- U_f(0,s)\,\gamma (s)\right|\,ds\\
   & \leq 2\,\ep +\frac{\ep}{I_\ep} \int_{-\tau_\ep}^0 |\gamma_n(s)|\, ds+ \ep\,\sup_{s\in\R^-}|U_f(0,s)|\leq \ep\, (3 + K)\,,
\end{align*}
which proves our claim, and finishes the proof of (i).\par\smallskip
\noindent (ii) First notice that, from the variation of constant formula, the solution~of
\[\begin{cases} y'(t)=-\alpha(t)\,y(t)+\beta(t)\,y(t-1)+\gamma(t)\,,\\
y(t)=0,\quad t\in[s-1, s]\,,
\end{cases}
\]
for $f=(h,\alpha,\beta,\gamma)\in E$  is given by  $\int_s^t U_f(t,s)\,\gamma(s)\,ds$  for each  $t\geq s$,
and, as a consequence of~\eqref{eq:inefun}, it can be shown that
\begin{equation}\label{boundedsolut}
y(t,f,b(f)))=\int_{-\infty}^t U_f(t,s)\,\gamma(s)\,ds\,
\end{equation}
is a bounded and globally defined solution of~\eqref{eq:lineal} such that $y(s,f,b(f))=b(f)(s)$ for each $s\in[-1,0]$.
\par
Next, as in~\eqref{eq:s> t+1}, from $U_f(t,s)=z(t-s,f_s,\phi_0)$ we deduce that
\[U_{f_t}(0,s-t)=z(t-s,(f_t)_{s-t},\phi_0)=z(t-s,f_s,\phi_0)=U_f(t,s)\,.\]
Therefore,
\[\widetilde b(f_t)=\int_{-\infty}^0 U_{f_t}(0,s)\,\gamma_t(s)\,ds=\int_{-\infty}^t U_{f_t}(0,s-t)\,\gamma(s)\,ds=\int_{-\infty}^t U_f(t,s)\,\gamma(s)\,ds\,,\]
 and from~\eqref{boundedsolut} it follows that $\widetilde b(f_t)=y(t,f,b(f))$. Thus, $b(f)$ is a continuous equilibrium for~\eqref{semiflowlinear} because $b(f_t)(s)=\widetilde b(f_{t+s})=y(t+s,f,b(f))$ for $s\in[-1,0]$, i.e.  $b(f_t)=y_t(\cdot,f, b(f)))$, as stated. In addition, from~\eqref{eq:inefun} it follows that this equilibrium $b(f)$ is globally exponentially stable for $\Psi$ in \eqref{semiflowlinear}.  Finally notice that from~\ref{A3}, again a comparison result provides  $x_t(\cdot,f,b(f))\leq y_t(\cdot,f,b(f))=b(f_t)$ for $t\ge0$, and we conclude that $b(f)$ is a continuous super-equilibrium for~\eqref{semiflowh}, which finishes the proof of (ii).\par\smallskip
 We will omit the proofs of (iii) and (iv) because follow similar arguments to those of (i) and (ii). We want only to remark the following differences. Notice  that $Z_f(t,s):=\exp\big(-\int_s^t\alpha(u)\,du\big)$ is the fundamental matrix of the Carath\'{e}odory ordinary differential equation~$z'(t)=-\alpha(t)\,z(t)$ with $Z_f(s,s)=1$,  $U_f(t,s)$ is the fundamental matrix  of~\eqref{eq:linealhomo}, and  from~\ref{A3} and~\eqref{eq:inefun} we also deduce that $Z_f(t,s)\le U_f(t,s)\leq K\, e^{-\delta\,(t-s)}$ whenever $s\leq t$. Moreover, in (i) and (ii)  we were comparing the solutions of equations~\eqref{eq:main} and~\eqref{eq:lineal}, i.e. $x(t,f,\phi)\leq y(t,f,\phi)$, so that it is said that~\eqref{eq:lineal} is a \emph{majorant} of~\eqref{eq:main},  and now again from~\ref{A1} and~\ref{A3}, the Carath\'{e}odory ordinary differential equation $y'(t)=-\alpha(t)\,y(t)+h_0(t)$ is a \emph{minorant} of~\eqref{eq:main}. This is the reason why now $a(f)$, which is a globally exponentially stable continuous equilibrium for $\Psi$ in \eqref{semiflowlinear}, is a continuous sub-equilibrium for~\eqref{semiflowh}.\par\smallskip
 Finally, $a(f)\leq b(f)$ for each $f\in E$ follows from the above comparison of solutions, and $0\ll a(f)$ for each $f\in E$ is a consequence of the definition of $a(f)$ and the belonging of  $h_0$ to the set $E_1$, which is closed, invariant and does not contain the null function, from which it can be shown that
the set  $\{r\in[-\infty, u]\mid h_0(r)>0\}$  have positive measure for each $u\in[-1,0]$.
\end{proof}
As a consequence, from Theorem~\ref{thm:equilibria} we deduce the existence of two equilibria for the skew-product semiflow~\eqref{semiflowh} induced by the family of equations~\eqref{eq:main}, one is lower-semicontinuous  $u\colon E\to\mathcal{C}^+,\;f\mapsto u(f)$, the other is  upper-semicontinuous  $v\colon E\to\mathcal{C}^+,\;f\mapsto v(f)$, and they satisfy $0\ll u(f)\leq v(f)$. It can be shown that they define the top and lower covers of a \emph{pullback attractor} for the evolution processes induced by~\eqref{eq:main} (we refer the reader to~Kloeden and Rasmussen~\cite{book:KR} for these concepts).\par\smallskip
In addition, we assume the following condition implying  the sublinearity of the skew-product semiflow:
\begin{enumerate}[resume*=scalar]\setlength\itemsep{2pt}
\item\label{A5} for each function $f=(h,\alpha,\beta,\gamma)\in E$, the function $h$ is sublinear,  that is, for each $y\in\R^+$ and $\lambda\in[0,1]$,
$\;h(t,\lambda\, y)\ge \lambda\, h(t,y)$ for a.e. $t\in\R$.
\end{enumerate}
From this, we deduce that $g(t,x,y)=-\alpha(t)\,x+h(t,y)$  is also sublinear, i.e. satisfies~\ref{S}, and hence the skew-product semiflow~\eqref{semiflowh} is sublinear, as shown in Proposition~\ref{prop:sublinear}. Next lemma characterizes the points of strong sublinearity for the skew-product semiflow~\eqref{semiflowh} (see~\eqref{defi:strongsub} for the definition).
\begin{lem} Consider $f=(h,\alpha,\beta,\gamma)\in E$ and assume that the scalar function $h$  satisfies the following property of strong sublinearity:
\begin{itemize}[leftmargin=18pt]
\item for each $\delta>0$ the set of points  $t\in(0,\delta)$ such that
\[\quad h(t,\lambda\, y)> \lambda\, h(t,y)\;   \quad \text{for each } y>0 \text{ and }\lambda\in(0,1)\]
has positive Lebesgue measure.
\end{itemize}
Then, $f$ is a point of strong sublinearity for the skew-product semiflow~\eqref{semiflowh}.
\end{lem}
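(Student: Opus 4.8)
The strategy is to reduce the claimed strict inequality to a scalar differential inequality for the difference of two trajectories. Put $v(t)=\lambda\,x(t,f,\phi)$ and $w(t)=x(t,f,\lambda\,\phi)$; since $\phi\gg0$ and $\lambda\,\phi\gg0$ lie in $\mathcal{C}^+$ and the semiflow~\eqref{semiflowh} maps $\R^+\times E\times\mathcal{C}^+$ into $E\times\mathcal{C}^+$, both $v$ and $w$ are globally defined, nonnegative and locally absolutely continuous, and by Proposition~\ref{prop:sublinear} we already know $z:=w-v\ge0$ on $[0,\infty)$. Differentiating and using that the vector field of~\eqref{eq:main} is $g(t,x,y)=-\alpha(t)\,x+h(t,y)$, one gets for a.e.\ $t>0$
\begin{equation*}
z'(t)+\alpha(t)\,z(t)=h\big(t,w(t-1)\big)-\lambda\,h\big(t,x(t-1,f,\phi)\big).
\end{equation*}
The first step is to check that the right-hand side is nonnegative a.e.\ on $(0,\infty)$, so that $m(t):=z(t)\exp\big(\int_0^t\alpha(s)\,ds\big)$ is nondecreasing on $[0,\infty)$ with $m(0)=z(0)=0$. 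On $(0,1)$ the right-hand side equals $h(t,\lambda\,\phi(t-1))-\lambda\,h(t,\phi(t-1))$, which is $\ge0$ because $g$ satisfies~\ref{S} and hence, by Lemma~\ref{lem:S} applied with $x\equiv0$ and $y(\cdot)=\phi(\cdot-1)\ge0$, $h(t,\lambda\,\phi(t-1))\ge\lambda\,h(t,\phi(t-1))$ a.e. For $t>1$ one has $w(t-1)\ge v(t-1)\ge0$, so $h(t,w(t-1))\ge h(t,v(t-1))$ a.e.\ by Lemma~\ref{lem:f(t,a,b)->f(t,x(t),y(t))}(i) applied to $g$ (which satisfies~\ref{Ky}), while $h(t,v(t-1))=h\big(t,\lambda\,(v(t-1)/\lambda)\big)\ge\lambda\,h\big(t,v(t-1)/\lambda\big)=\lambda\,h(t,x(t-1,f,\phi))$ a.e.\ again by Lemma~\ref{lem:S}; adding these two facts gives the desired nonnegativity.

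The second, decisive step uses the strong sublinearity hypothesis on $h$ to upgrade $z\ge0$ to $z>0$ on $(0,\infty)$. Fix $\lambda\in(0,1)$ and $\phi\gg0$, so $\phi(s)>0$ for every $s\in[-1,0]$, whence $\phi(t-1)>0$ for all $t\in(0,1)$. Given an arbitrary $\ep\in(0,1)$, the hypothesis applied with $\delta=\ep$ furnishes a set $S_\ep\subset(0,\ep)$ of positive Lebesgue measure on which $h(t,\lambda\,y)>\lambda\,h(t,y)$ for every $y>0$; evaluating at $y=\phi(t-1)$ shows that $m'(t)>0$ for a.e.\ $t\in S_\ep$. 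Since $m$ is absolutely continuous, nondecreasing and vanishes at $0$, and $m'\ge0$ a.e.\ on $(0,\ep)$ by the first step,
\begin{equation*}
m(\ep)=\int_0^\ep m'(s)\,ds\ge\int_{S_\ep}m'(s)\,ds>0,
\end{equation*}
and, $\ep$ being arbitrary and $m$ nondecreasing, $m(t)>0$ for all $t>0$; equivalently $z(t)=w(t)-v(t)>0$ for all $t>0$. Finally, for any $t>1$ and every $s\in[-1,0]$ we have $t+s\ge t-1>0$, whence $x_t(s,f,\lambda\,\phi)=w(t+s)>v(t+s)=\lambda\,x_t(s,f,\phi)$; that is, $x_t(\cdot,f,\lambda\,\phi)\gg\lambda\,x_t(\cdot,f,\phi)$, which is precisely~\eqref{defi:strongsub}.

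The point deserving the most attention is the second step: one must exploit \emph{arbitrarily small} $\delta$ in the strong sublinearity hypothesis, so that the sublinear gain of $z$ is already felt on every interval $(0,\ep)$; this is exactly what forces $z>0$ on all of $(0,\infty)$ and therefore yields~\eqref{defi:strongsub} for \emph{every} $t>1$ rather than only for large $t$ (using $\delta=1$ alone would only give $z>0$ on $[1,\infty)$, hence strong sublinearity for $t>2$). Apart from this, the only routine care needed is the passage from the pointwise statements~\ref{Ky},~\ref{S} and the strong sublinearity assumption to the corresponding a.e.\ inequalities evaluated along the continuous curves $t\mapsto\phi(t-1)$, $t\mapsto v(t-1)/\lambda$ and $t\mapsto w(t-1)$, which is carried out exactly as in Lemmas~\ref{lem:f(t,a,b)->f(t,x(t),y(t))} and~\ref{lem:S}.
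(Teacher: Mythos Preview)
Your proof is correct and follows essentially the same strategy as the paper's: compare $w(t)=x(t,f,\lambda\phi)$ with $v(t)=\lambda\,x(t,f,\phi)$, exploit the strong sublinearity of $h$ on an initial interval to get $w>v$ there, and then propagate. The paper, however, packages the scalar comparison on $[0,1]$ as an appeal to condition~(ii) of Walter's Theorem~4 (after reducing the delay equation to an ODE as in Proposition~\ref{prop:sublinear}) and then argues \emph{recursively} on successive unit intervals; you instead write out the integrating factor for the scalar equation satisfied by $z=w-v$, show that $m(t)=z(t)\exp\!\big(\int_0^t\alpha\big)$ is nondecreasing on \emph{all} of $[0,\infty)$ (using~\ref{Ky} and~\ref{S} for $t>1$), and obtain strict positivity from the fact that $m'>0$ on a set of positive measure in every $(0,\ep)$. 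Your route is slightly more self-contained (no external comparison theorem) and neatly replaces the paper's recursion by a single monotonicity argument; the paper's version is shorter because it delegates the strict-inequality mechanism to Walter's result.
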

\begin{proof} We will just provide a sketch of the proof. First, notice that the function $g(t,x,y)=-\alpha(t)\,x+ h(t, y)$  satisfies the same property of strong sublinearity on $x$ and $y$. From this, reasoning as in Proposition~\ref{prop:sublinear} to transform the delay equation~\eqref{eq:main} into an ordinary one in $[0,1]$,  we check condition (ii) of~\cite[Theorem 4]{paper:Walter} to deduce that $x(t,f,\lambda \,\phi) > \lambda\,x(t,f, \phi)$ for each $t\in(0,1]$, $\lambda\in(0,1)$ and $\phi\gg 0$.
In a recursive way we can check that this also holds for $t>0$, which means that $x_t(\cdot,f,\lambda \,\phi)\gg \lambda \,x_t(\cdot,f,\phi)$ for $t>1$, as claimed.
\end{proof}
As a consequence, under assumptions~\ref{A1}--\ref{A5}, we can apply the conclusions of Theorem~\ref{theoremE+E-} to the monotone, sublinear and continuous skew-product semiflow~\eqref{semiflowh},  characterizing the points of strong sublinearity on $E_-$ and $E_+$, as in the previous lemma. Thus, the existence of a unique continuous equilibrium whose graph coincides with the pullback attractor of the equation is shown.
\subsection{Carath\'{e}odory non-autonomous cyclic feedback system} We finish with an application of our previous results to the mathematical model of biochemical feedback in protein synthesis given by the system of Carath\'{e}odory differential equations
\begin{equation}\label{bioche}
\begin{split}
x'_1(t)& =h(t,x_m(t))-\alpha_1(t)\,x_1(t)\,,\\
x'_i(t) & =x_{i-1}(t)-\alpha_i(t)\,x_i(t)\,,\qquad \text{ for } 2\leq i\leq m\,.
\end{split}
\end{equation}
The system~\eqref{bioche} expresses a model for a biochemical
control circuit in which each of the $x_j$ represents the
concentration of an enzyme; hence $x_j\ge 0$ for $j=1,\ldots,m$.
The autonomous ordinary case was firstly introduced by
Selgrade~\cite{paper:selg}. Different extensions to the
periodic and the autonomous functional cases
are explored in Smith~\cite{book:smith1995}, Krause and Ranft~\cite{paper:krra}, Smith and
Thieme~\cite{paper:smiththieme1}, and references therein. Chueshov~\cite{book:chue}
analyzes the random case and Novo {\it et al.}~\cite{paper:noos} the ordinary deterministic non-autonomous case. The case of finite-delay was considered, for the concave case, in Novo {\it et al.}~\cite{paper:nno2005}, and, for the sublinear case, in Novo and Obaya~\cite{book:noob}.\par
We state the assumptions to be considered for this problem, concerning with the Carath\'{e}odory ordinary case.  Notice that~\ref{B1} coincides with~\ref{A1} but we repeat it by completeness.
\begin{enumerate}[label=\upshape(\textbf{B\arabic*}),series=system,leftmargin=27pt,itemsep=2pt]
\item\label{B1}
$E_1$ is a closed invariant and bounded subset of functions $\alpha\in L^1_{loc}$  such that $\alpha(t)\ge 0$ for a.e. $t\in\R$ and the null function $\alpha=0$ does not belong to~$E_1$. Let $D$ be a countable dense subset of $\R$ and consider  the subset of $\LC$  given~by
\[
E_2=\left\{h\in\LC\;\Bigg|\,\begin{array}{l}
h \text{ satisfies~\ref{K2}, } h_0(t):=h(t,0)\in E_1 \\[.1cm]
\text{and } h(t,y)=h(t,0)\;\text{ for }y\le0
\end{array}\right\}.
\]
Notice that $E_2$ is invariant and closed with respect to the topology $\T_D$.
\item\label{B3}
$E$ is a subset of
\[
\hspace{0.5cm}\left\{f=(h,\alpha_1,\ldots,\alpha_m,\beta,\gamma)\;\Bigg|\,\begin{array}{l}
h\in E_2,\; \alpha_1,\ldots,\alpha_m,\beta,\gamma\in E_1\text{ and } \\[.1cm]
h(t,y)\leq \beta(t)\,y+\gamma(t), \; \forall y\in\R, \text{ a.e.} \, t\in\R
\end{array}\right\}
\]
 which is invariant and closed for the product topology.
\item\label{B4}
There are constants $K$, $\delta>0$ such that for each $f=(\alpha_1,\ldots,\alpha_m\,,\beta,\gamma)\in E$
\[\|U(t,f)\|\leq K\,e^{-\delta \,t} \quad \text{ for each }t>0\,,\]
where $U(t,f)$ is the fundamental matrix solution  of the system
\begin{equation}\label{eq:linealhomo2}
\begin{split}
z'_1(t)& =\beta(t)\,z_m(t)-\alpha_1(t)\,z_1(t)\,,\\
z'_i(t) & =z_{i-1}(t)-\alpha_i(t)\,z_i(t)\,,\qquad \text{ for } 2\leq i\leq m\,,
\end{split}
\end{equation}
principal at $t=0$, that is, $U(0,f)=I_m$.
\end{enumerate}
Under these assumptions, the function defining~\eqref{bioche} satisfies conditions~\ref{K1} and~\ref{LxODE}, and from~\cite[Theorem 2]{paper:Walter} and Proposition~\ref{thm:Kx+LipschitzODEs-cont skew-prod}, the skew-product semiflow
\begin{equation}\label{semiflowhbioche}
\Phi\colon \R^+\times E\times(\R^+)^m\to E\times(\R^+)^m,\quad (t,f,x_0)\mapsto  \big(f_t, x(t,f,x_0)\big)\,,
\end{equation}
is monotone and continuous for the above product topology.
\begin{rmk}\label{fundamental2} If we denote by $U_f(t,s)$ the fundamental matrix solution for~\eqref{eq:linealhomo2} principal at $t=s$, that is, $U_f(s,s)=I_m$, from assumption~\ref{B4} we deduce that
\[
\|U_f(t,s)\|\leq K\, e^{-\delta\,(t-s)} \quad \text{ whenever }t\geq s\,,
\]
because $U_f(t,s)=U(t-s,f_s)$. Moreover,  denoting by  $Z_f(s,t)$ the fundamental matrix solution of
\begin{equation}\label{eq:linealhomo2bioche}
\begin{split}
z'_1(t)& =-\alpha_1(t)\,z_1(t)\,,\\
z'_i(t) & =z_{i-1}(t)-\alpha_i(t)\,z_i(t)\,,\qquad \text{ for } 2\leq i\leq m\,,
\end{split}
\end{equation}
and since, from~\ref{B1}, the system~\eqref{eq:linealhomo2bioche} is a minorant of~\eqref{eq:linealhomo2}, we also have
\[
\|Z_f(t,s)\|\leq K\, e^{-\delta\,(t-s)} \quad \text{ whenever }t\geq s\,.
\]
In addition, notice that if the first component of $z_0$ is positive, i.e. $(z_0)_1>0$,  the solution of~\eqref{eq:linealhomo2bioche} with this initial data $z_0$  is strongly positive,
that is, $Z_f(t,0)\,z_0\gg 0$ for each $t> 0$.  The reason is as follows. From $z'_1(t)=-\alpha_1(t)\,z_1(t)$ we deduce that
$(Z_f(t,0)\,z_0)_1= \left(\int_0^t \exp(-\alpha_1(r))\,dr\right)(z_0)_1>0$ for each $t\ge 0$\,.
 For the second component, from  $z_2'(t)=z_1(t)-\alpha_2(t)z_2(t)> -\alpha_2(t)z_2(t)$ and comparison of solutions we deduce that
$(Z_f(t,0)\,z_0)_2 > \left(\int_0^t \exp(-\alpha_2(r))\,dr\right)(z_0)_2\geq 0$ for  each  $t>0$,
and the same reasoning for the rest of the components provides  $Z_f(t,0)\,z_0\gg 0$ for each $t>0$, as claimed.
\end{rmk}
\begin{prop} Under assumptions~\ref{B1}--\ref{B4} and notation of {\rm Remark~\ref{fundamental2}}, consider the unit vector $\textbf{e}_1=(1,0,\ldots,0)^t$. Then
\begin{itemize}[leftmargin=20pt]
\item[\rm(i)] $b\colon E\to(\R^+)^m\,,$ $f\mapsto b(f)=\int_{-\infty}^0 U_f(0,s)\,\gamma(s)\,\mathbf{e}_1\,ds$
is a continuous super-equilibrium for~\eqref{semiflowhbioche},
\item[\rm(ii)] $a\colon E\to(\R^+)^m\,,$ $f\mapsto a(f)=\int_{-\infty}^0 Z_f(0,s)\,h_0(s)\,\mathbf{e}_1\,ds$, with $h_0$ defined in~\ref{B1},
is a continuous sub-equilibrium for~\eqref{semiflowhbioche},
\end{itemize}
and they satisfy $\,0\ll a(f)\leq b(f)$.
\end{prop}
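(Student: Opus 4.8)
The plan is to follow the proof of Proposition~\ref{equilibrios} almost verbatim, with the scalar fundamental solution replaced by the matrix fundamental solutions $U_f(t,s)$ and $Z_f(t,s)$ of Remark~\ref{fundamental2}, and with Proposition~\ref{thm:Kx+LipschitzODEs-cont skew-prod} playing the role that Theorem~\ref{thm:K_xK_y+lipschitz=>cont skew-prod} had there. First I would record that both linear systems~\eqref{eq:linealhomo2} and~\eqref{eq:linealhomo2bioche} are cooperative --- their only off\nbd-diagonal coefficients are $\beta(t)\ge0$ and the $1$'s below the diagonal, so~\ref{K1} (indeed~\ref{K2}) holds --- and satisfy~\ref{LxODE}, since their coefficients belong to $E_1$, which is bounded in $L^1_{loc}$. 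Hence Proposition~\ref{thm:Kx+LipschitzODEs-cont skew-prod} applies to each of them.

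\emph{Well-definedness, uniform tails, continuity.} From the exponential bound of Remark~\ref{fundamental2} one has $\|U_f(0,s)\|,\|Z_f(0,s)\|\le K e^{\delta s}$ for $s\le 0$, and from~\ref{B1} the sets $\{\gamma_t\mid\gamma\in E_1,\ t\in\R\}$ and $\{(h_0)_t\mid t\in\R\}$ (recall $h_0\in E_1$) are $L^1_{loc}$-bounded, so $\int_t^{t+1}\gamma(s)\,ds\le C$ uniformly. Breaking $(-\infty,0]$ into unit intervals and summing $\sum_{j\ge1}\int_{-j}^{-j+1}K e^{\delta s}\gamma(s)\,ds\le\sum_{j\ge1}KC e^{(-j+1)\delta}<\infty$, exactly as in Proposition~\ref{equilibrios}(i), gives absolute convergence of both integrals with a bound independent of $f$, and in particular for each $\ep>0$ a $\tau_\ep>0$ with $\big\|\int_{-\infty}^{-\tau_\ep}U_f(0,s)\gamma(s)\mathbf{e}_1\,ds\big\|<\ep$ uniformly in $f\in E$, and the analogue for $Z_f,h_0$. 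Next, the columns of $U_f(0,s)=U(-s,f_s)$ are the values at time $-s\ge0$ of the solutions of~\eqref{eq:linealhomo2} with vector field $f_s$ issuing from the canonical basis vectors, so Proposition~\ref{thm:Kx+LipschitzODEs-cont skew-prod} together with continuity of the base flow $(s,f)\mapsto f_s$ yields continuity of $(s,f)\mapsto U_f(0,s)$ on $\R^-\times E$, and likewise of $(s,f)\mapsto Z_f(0,s)$. The continuity of $b$ (and of $a$) then follows exactly as in Proposition~\ref{equilibrios}(i)--(ii): for $f_n\to f$, split the integral at $-\tau_\ep$, use uniform convergence $U_{f_n}(0,\cdot)\to U_f(0,\cdot)$ on the compact set $[-\tau_\ep,0]\times(\{f_n\mid n\in\N\}\cup\{f\})$ together with $L^1_{loc}$-convergence $\gamma_n\to\gamma$ on the compact part, and the uniform tail estimate on the rest.

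\emph{Semi-equilibrium property, positivity, ordering.} The identity $U_{f_t}(0,s-t)=U_f(t,s)$ gives $b(f_t)=\int_{-\infty}^t U_f(t,s)\gamma(s)\mathbf{e}_1\,ds$, which by the variation-of-constants formula and the exponential bound is the unique bounded, globally defined solution of~\eqref{eq:linealhomo2} forced by $\gamma(t)\mathbf{e}_1$ with value $b(f)$ at $t=0$; thus $b(f_t)=y(t,f,b(f))$ for that linear flow. Since~\ref{B3} gives $h(t,y)\le\beta(t)y+\gamma(t)$, the comparison theorem~\cite[Theorem~2]{paper:Walter} applied to the cooperative system~\eqref{bioche} and its linear majorant yields $x(t,f,b(f))\le y(t,f,b(f))=b(f_t)$, so $b$ is a continuous super-equilibrium for~\eqref{semiflowhbioche}; in particular $0\le x(t,f,b(f))\le b(f_t)$, so this solution is globally defined. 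Symmetrically, $a(f_t)$ is the bounded solution of the linear minorant~\eqref{eq:linealhomo2bioche} forced by $h_0(t)\mathbf{e}_1$ with value $a(f)$, and since on the positively invariant cone $(\R^+)^m$ one has $h(t,x_m)\ge h(t,0)=h_0(t)$ by~\ref{K2}, the same comparison gives $x(t,f,a(f))\ge a(f_t)$ (global existence of $x(t,f,a(f))$ following from $a(f)\le b(f)$, monotonicity and $x(t,f,a(f))\le b(f_t)$), so $a$ is a continuous sub-equilibrium. For $0\ll a(f)$ and $0\ll b(f)$: $\mathbf{e}_1$ has positive first component, so Remark~\ref{fundamental2} gives $Z_f(0,s)\mathbf{e}_1\gg0$ for $s<0$, and since~\eqref{eq:linealhomo2bioche} is a minorant of~\eqref{eq:linealhomo2} also $U_f(0,s)\mathbf{e}_1\ge Z_f(0,s)\mathbf{e}_1\gg0$; because $E_1$ is closed and invariant with $0\notin E_1$, the sets $\{r\le u\mid\gamma(r)>0\}$ and $\{r\le u\mid h_0(r)>0\}$ have positive measure for every $u\in[-1,0]$ (as in Proposition~\ref{equilibrios}), so the integrals defining $a(f)$ and $b(f)$ are strongly positive. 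Finally $a(f)\le b(f)$ follows from $Z_f(0,s)\le U_f(0,s)$ and $h_0(s)=h(s,0)\le\gamma(s)$ (put $y=0$ in~\ref{B3}), or equivalently from the comparison of the bounded solutions of the two auxiliary linear systems.

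\emph{Main obstacle.} The bulk is bookkeeping parallel to Proposition~\ref{equilibrios}, so the genuinely new and delicate points are: (a) verifying that the two linear auxiliary systems~\eqref{eq:linealhomo2} and~\eqref{eq:linealhomo2bioche} fall under Proposition~\ref{thm:Kx+LipschitzODEs-cont skew-prod}, which is precisely what makes $(s,f)\mapsto U_f(0,s)$ and $(s,f)\mapsto Z_f(0,s)$ continuous; and (b) running the scalar comparison result~\cite[Theorem~2]{paper:Walter} in the vector setting, where one must restrict to the positively invariant cone $(\R^+)^m$ so that $h(t,x_m)\ge h_0(t)$ holds and the chain $0\le a(f_t)\le x(t,f,\cdot)\le b(f_t)$ is available; the exponential decay~\ref{B4} is what makes all the improper integrals converge (uniformly in $f$) and the auxiliary equilibria globally exponentially stable.
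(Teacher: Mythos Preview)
Your proposal is correct and follows essentially the same route as the paper, which explicitly omits the proof as similar to Proposition~\ref{equilibrios} and only records the key differences: the majorant and minorant linear systems, the use of $Z_f(0,s)\mathbf{e}_1\gg0$ from Remark~\ref{fundamental2} for $0\ll a(f)$, and the passage from $\mathcal{C}$ to $\R^m$. Your identification of Proposition~\ref{thm:Kx+LipschitzODEs-cont skew-prod} as the ODE substitute for Theorem~\ref{thm:K_xK_y+lipschitz=>cont skew-prod}, and your direct argument for $a(f)\le b(f)$ via $Z_f(0,s)\mathbf{e}_1\le U_f(0,s)\mathbf{e}_1$ together with $h_0(s)\le\gamma(s)$, are exactly what the paper's sketch intends.
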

\begin{proof} We omit the proof because it is similar to that of Proposition~\ref{equilibrios}. However, we want to remark the following facts. Although we  introduced the definitions of sub and super-equilibria  for skew-product semiflows induced by Carath\'{e}odory delay differential equations, they are easily adapted to Carath\'{e}odory ordinary differential equations, changing $\mathcal{C}=C([-1,0],\R^N)$ by $\R^m$ in our case.\par
We want also to notice that in this example, the system
\begin{equation*}
\begin{split}
z'_1(t)& =\beta(t)\,z_m(t)-\alpha_1(t)\,z_1(t)+\gamma(t)\,,\\
z'_i(t) & =z_{i-1}(t)-\alpha_i(t)\,z_i(t)\,,\qquad\qquad\quad \text{ for } 2\leq i\leq m
\end{split}
\end{equation*}
is a majorant for~\eqref{bioche}  in the sense explained in the proof of Proposition~\ref{equilibrios}, and
$\,z(t)=\int_{-\infty}^t U_f(0,s)\,\gamma(s)\,\textbf{e}_1\,ds\,$ is a globally defined bounded solution for it.
Analogously, $\,z(t)=\int_{-\infty}^t Z_f(0,s)\,h_0(s)\,\textbf{e}_1\,ds\,$ is a globally defined bounded solution~of
\begin{equation*}
\begin{split}
z'_1(t)& =-\alpha_1(t)\,z_1(t)+h_0(t)\,,\\
z'_i(t) & =z_{i-1}(t)-\alpha_i(t)\,z_i(t)\,,\qquad \text{ for } 2\leq i\leq m\,,
\end{split}
\end{equation*}
which is a minorant of~\eqref{bioche} because $h$ satisfies~\ref{K1}, and then $h(t,z)\ge h(t,0)=h_0(t)$ for $z\geq 0$. These two facts are in this case the main ingredients for the proof of (i), (ii) and $a(f)\leq b(f)$ for each $f\in E$. The proof of $0\ll a(f)$ for each $f\in E$ relies now on the belonging of $h_0$ to $E_1$, so that $\{s\in (-\infty,0] \mid h_0(s) > 0\}$ has positive measure, and then for all these points, the first component of $h_0(s)\,\mathbf{e}_1$ is positive and $Z_f(0,s)\,h_0(s)\,\mathbf{e}_1 =Z_{f_s}(-s,0)\,h_0(s)\,\mathbf{e}_1\gg 0$, as shown in Remark~\ref{fundamental2}.
\end{proof}
As a in the previous scalar example, from Theorem~\ref{thm:equilibria} we deduce the existence of two equilibria for the skew-product semiflow~\eqref{semiflowhbioche} induced by the family of systems~\eqref{bioche}. One of them,  $u\colon E\to\mathcal{C}^+,\;f\mapsto u(f)$ is lower-semicontinuous, the other, $v\colon E\to\mathcal{C}^+,\;f\mapsto v(f)$, is  upper-semicontinuous, and they satisfy $0\ll u(f)\leq v(f)$. They define the top and lower covers of a \emph{pullback attractor} for the skew-product semiflow~\eqref{semiflowhbioche}. \par\smallskip
We finish with the assumption of a sublinearity condition for this case, in order to apply the conclusions of Theorem~\ref{theoremE+E-}.
\begin{enumerate}[resume*=system]\setlength\itemsep{2pt}
\item\label{B5} For each $f=(h,\alpha_1,\ldots,\alpha_m,\beta,\gamma)\in E$, the function $h$ is sublinear, that is, for each $y\in\R^+$ and $\lambda\in[0,1]$, $\;h(t,\lambda\, y)\ge \lambda\, h(t,y)$ for a.e. $t\in\R$.
\end{enumerate}
From this we deduce that the function $g\colon \R\times(\R^+)^m\to (\R^+)^m$ of system~\eqref{bioche} defined as
$g_1(t,x)=h(t,x_m)-\alpha_1(t)\,x_1$ and $g_i(t,x)=x_{i-1}-\alpha_i(t)\,x_i$ for   $2\leq i\leq m$
is sublinear, and hence the skew-product semiflow~\eqref{semiflowhbioche} is sublinear.
\par\smallskip
As in~\eqref{defi:strongsub}, but now for the case of ordinary differential equations, $f$ is a \emph{point of strong sublinearity for the skew-product semiflow}~\eqref{semiflowhbioche} if
\begin{equation}\label{defi:strongsubordi}
x(t, f,\lambda\,x_0)\gg \lambda\, x(t, f,x_0) \quad \text{whenever } \; t> 0,\;  \lambda\in(0,1) \text{ and } x_0\gg 0.
\end{equation}Next lemma provides a characterization for these points of strong sublinearity.
\begin{lem} Consider $f=(h,\alpha_1,\ldots,\alpha_m,\beta,\gamma)\in E$ and assume that the scalar function $h$  satisfies the following property of strong sublinearity:
\begin{itemize}[leftmargin=18pt]
\item for each $\delta>0$ the set of points  $t\in(0,\delta)$ such that
\begin{equation}\label{strognsublinia}
 \quad h(t,\lambda\, y)> \lambda\, h(t,y)\;   \quad \text{for each } y>0 \text{ and }\lambda\in(0,1)
\end{equation}
has positive Lebesgue measure.
\end{itemize}
Then, $f$ is a point of strong sublinearity for the skew-product semiflow~\eqref{semiflowhbioche}.
\end{lem}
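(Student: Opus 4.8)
The plan is to prove \eqref{defi:strongsubordi} directly, exploiting that the coupling in \eqref{bioche} is linear and cyclically connected ($x_1\to x_2\to\cdots\to x_m\to x_1$, the last arrow through $h$), so that a strict gain created in the first component by the strong sublinearity of $h$ propagates to all the remaining ones. Fix $f=(h,\alpha_1,\ldots,\alpha_m,\beta,\gamma)\in E$ as in the statement, $\lambda\in(0,1)$ and $x_0\gg0$, and set $x(t)=x(t,f,x_0)$, $v(t)=\lambda\,x(t)$, $w(t)=x(t,f,\lambda\,x_0)$, all non-negative and defined for $t\ge0$. Since \eqref{semiflowhbioche} is monotone and sublinear (the latter noted just after assumption~\ref{B5}, with the argument of Proposition~\ref{prop:sublinear}), we already know $v(t)\le w(t)$ for all $t\ge0$; the task is to upgrade this to $v(t)\ll w(t)$ for $t>0$. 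A preliminary fact, used throughout, is that $x_0\gg0$ forces $x_m(t)>0$ for every $t\ge0$: from $x_m'(t)=x_{m-1}(t)-\alpha_m(t)\,x_m(t)\ge-\alpha_m(t)\,x_m(t)$ and a one-line Carath\'eodory comparison, $x_m(t)\ge x_m(0)\,\exp\!\big(-\!\int_0^t\alpha_m\big)>0$; hence also $w_m(t)\ge v_m(t)=\lambda\,x_m(t)>0$ for all $t\ge0$.

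First I would check that $v$ solves \eqref{bioche} exactly in the components $i\ge2$ and is a \emph{strict} subsolution in the component $i=1$. Indeed, for $2\le i\le m$ one has $v_i'(t)=\lambda\,(x_{i-1}(t)-\alpha_i(t)\,x_i(t))=v_{i-1}(t)-\alpha_i(t)\,v_i(t)$, i.e. $v_i'=g_i(t,v)$; while for $i=1$,
\[
v_1'(t)-g_1(t,v(t))=\lambda\,h\big(t,x_m(t)\big)-h\big(t,\lambda\,x_m(t)\big)\le0\qquad\text{for a.e. }t\ge0,
\]
the inequality following from the sublinearity of $h$ along the trajectory (the one-variable analog of Lemma~\ref{lem:S}), and being \emph{strict on a subset of $(0,\delta)$ of positive Lebesgue measure for every $\delta>0$}, thanks to the strong-sublinearity hypothesis on $h$ together with $x_m(t)>0$ and $\lambda\in(0,1)$.

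Then I would carry out the propagation by hand. Put $P_i(t)=w_i(t)-v_i(t)\ge0$, so $P_i(0)=0$. Subtracting the equations gives $P_1'(t)=r(t)-\alpha_1(t)\,P_1(t)$, where $r(t):=h(t,w_m(t))-\lambda\,h(t,x_m(t))$, and $P_i'(t)=P_{i-1}(t)-\alpha_i(t)\,P_i(t)$ for $2\le i\le m$. By monotonicity of $h$ in its second argument (condition~\ref{K2}) and $w_m(t)\ge\lambda\,x_m(t)$, one has $r(t)\ge h(t,\lambda\,x_m(t))-\lambda\,h(t,x_m(t))\ge0$, and $r(t)>0$ on a positive-measure subset of each $(0,\delta)$ by the previous step. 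The variation-of-constants formula then yields $P_1(t)=\int_0^t\exp\!\big(-\!\int_s^t\alpha_1\big)\,r(s)\,ds>0$ for every $t>0$ (apply the hypothesis with $\delta=t$). Inductively, once $P_{i-1}(s)>0$ for all $s>0$, the same formula gives $P_i(t)=\int_0^t\exp\!\big(-\!\int_s^t\alpha_i\big)\,P_{i-1}(s)\,ds>0$ for all $t>0$, $i=2,\ldots,m$. Therefore $w(t)\gg v(t)$, i.e. $x(t,f,\lambda\,x_0)\gg\lambda\,x(t,f,x_0)$, for every $t>0$ and every $\lambda\in(0,1)$, $x_0\gg0$, which is exactly~\eqref{defi:strongsubordi}. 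Alternatively, this strict comparison can be extracted from condition~(ii) of~\cite[Theorem~4]{paper:Walter} applied to the cooperative, cyclically coupled system \eqref{bioche}, as in the proof of the analogous lemma for \eqref{eq:main}.

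The main obstacle is not any single estimate but ensuring that the strict inequality holds for \emph{every} $t>0$, not merely for large $t$: this is precisely why the hypothesis demands that the strong-sublinearity set of $h$ meet every interval $(0,\delta)$ in positive measure, and why it is important to first establish $x_m(t)>0$ on all of $[0,\infty)$, so that the strong sublinearity of $h$ is actually triggered along the trajectory.
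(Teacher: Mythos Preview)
Your proof is correct and follows essentially the same route as the paper: both set $v(t)=\lambda\,x(t,f,x_0)$, $w(t)=x(t,f,\lambda\,x_0)$, use the already-established sublinearity to get $v\le w$, then exploit the strong sublinearity of $h$ to obtain a strict gain in the first component and propagate it inductively through the linear cyclic coupling $x_1\to x_2\to\cdots\to x_m$. The only difference is stylistic: the paper invokes condition~(ii) of \cite[Theorem~4]{paper:Walter} at each step, whereas you compute the difference $P_i=w_i-v_i$ directly via the scalar variation-of-constants formula $P_i(t)=\int_0^t\exp\!\big(-\!\int_s^t\alpha_i\big)P_{i-1}(s)\,ds$; you yourself note that these are interchangeable. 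Your explicit verification that $x_m(t)>0$ for all $t\ge0$ (so that the hypothesis \eqref{strognsublinia} is genuinely triggered along the trajectory) is a useful addition that the paper leaves implicit.
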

\begin{proof} Take $\lambda\in (0,1)$, $x_0\gg 0$, $v(t)=\lambda \,x(t,f,x_0)$ and $w(t)= x(t, f , \lambda \,x_0)$. Since the skew-product semiflow~\eqref{semiflowhbioche} is sublinear we know that $v(t)\leq w(t)$ for each $t\geq 0$. Moreover,  consider the scalar linear equation
\begin{equation*}
y'(t)=l_1(t,y(t))=h(t,\lambda\, x_m(t,f,x_0))-\alpha_1(t)\,y(t)\,.
\end{equation*}
From~\eqref{strognsublinia} and $v_1(t)\leq w_1(t)$, we deduce that for each $\delta>0$
\begin{equation*}
v_1'(t)-l_1(t,v_1(t))= \lambda \,h(t,x_m(t,f,x_0))- h(t,\lambda\,x_m(t,f,x_0))<0=w_1'(t)-l_1(t,w_1(t))
\end{equation*}
for $t$ in a subset of $(0,\delta)$ with positive Lebesgue measure. Therefore, condition (ii) of~\cite[Theorem 4]{paper:Walter} holds and we conclude that $v_1(t)<w_1(t)$ for each $t>0$.
\par \smallskip
Next, we take $y'(t)=l_2(t,y(t))=x_1(t,f,\lambda \,x_0)-\alpha_2(t)\,y(t)$. As before, but now from $v_1(t)<w_1(t)$ for each $t>0$ and $v_2(t)\leq w_2(t)$ we obtain
\[v_2'(t)-l_2(t,v_2(t))<0=w_2'(t)-l_2(t,w_2(t))\,,  \]
for each $t>0$, which implies $v_2(t)<w_2(t)$ for each $t>0$. The inequalities for the rest of the components are obtained in a similar way, so that $v(t)<w(t)$ for each $t>0$, that is, inequality~\eqref{defi:strongsubordi} holds, and $f$ is a point of strong sublinearity for~\eqref{semiflowhbioche}, as claimed.
\end{proof}
As a consequence, under assumptions~\ref{B1}--\ref{B5}, we can apply the conclusions of Theorem~\ref{theoremE+E-} to the monotone, sublinear and continuous skew-product semiflow~\eqref{semiflowhbioche},  characterizing the points of strong sublinearity on $E_-$ and $E_+$, as in the previous lemma. Therefore, the existence of a unique continuous equilibrium whose graph coincides with the pullback attractor of the system is shown.


\begin{thebibliography}{99}
\bibitem{paper:ZA1} \textsc{Z. Artstein}, Topological dynamics of an ordinary differential equation, \emph{J. Differential Equations} \textbf{23} (1977), 216--223.
\bibitem{paper:ZA2} \textsc{Z. Artstein}, Topological dynamics of ordinary differential equations and Kurzweil equations, \emph{J. Differential Equations} \textbf{23} (1977), 224--243.
\bibitem{paper:ZA3} \textsc{Z. Artstein}, The limiting equations of nonautonomous ordinary differential equations, \emph{J. Differential Equations} \textbf{25} (1977), 184--202.
\bibitem{book:AF} \textsc{J-P. Aubin, H. Frankowska}: \emph{Set-valued analysis}, \textrm{Birkh\"{a}user Basel, 1990.}
\bibitem{book:BC} \textsc{F. Brauer, C. Castillo-Ch\'{a}vez}: \emph{Mathematical Models in Population Biology and Epidemiology}, \textrm{Springer New York Dordrecht Heidelberg London, 2012.}
\bibitem{book:chue} \textsc{I.D. Chueshov},
        \emph{Monotone Random Systems. Theory and Applications},
        Lecture Notes in Math. {\bf 1779},
        Springer-Verlag, Berlin, Heidelberg, 2002.
\bibitem{book:CL} \textsc{E.A. Coddington, N. Levinson}, \emph{Theory Of Ordinary Differential Equations}, \textrm{McGraw-Hill, New York, 1955.}
\bibitem{book:HALE} \textsc{J.K. Hale}, \emph{Theory Of Functional Differential Equations}, \textrm{Springer-Verlag, New York, 1977.}
\bibitem{paper:AJH} \textsc{A.J. Heunis}: Continuous dependence of the solutions of an ordinary differential equation, \emph{J. Differential Equations} \textbf{54} (1984), 121--138.
\bibitem{book:KR} \textsc{P.~Kloeden, M.~Rassmussen},
        \emph{Nonautonomous Dynamical Systems},
       \textrm{ Mathematical Surveys and Monographs,
        Amer. Math. Soc., 2011.}
\bibitem{paper:KrNu} \textsc{U. Krause, R.D. Nussbaum}, A limit set trichotomy for self-mappings of normal cones in Banach spaces, \emph{ Nonlinear Anal.} \textbf{20} (1993), no. 7, 855--870.
\bibitem{paper:krra} \textsc{U. Krause, P. Ranft},
        A limit set trichotomy for monotone nonlinear dynamical systems.
        \emph{Nonlinear Anal.} {\bf 19} (1992), 375--392.
\bibitem{paper:LNO1} \textsc{I.P. Longo, S. Novo, R. Obaya}, Topologies of $L^p_{loc}$ type for Carath\'eodory functions with applications in non-autonomous differential equations, \emph{J. Differential Equations}, \textbf{263} (2017), 7187--7220.
\bibitem{paper:LNO2} \textsc{I.P. Longo, S. Novo, R. Obaya}, Weak topologies for Carath\'eodory differential equations. Continuous dependence, exponential dichotomy and attractors, \emph{J. Dynam. Differential Equations} \textbf{31}, no. 3 (2019), 1617--1651.
\bibitem{paper:LNO3} \textsc{I.P. Longo, S. Novo, R. Obaya}, Topologies of continuity for Carath\'{e}odory delay differential equations with applications in non-autonomous dynamics, \emph{ Discrete Contin. Dyn. Sist.}  \textbf{39}, no. 9 (2019), 5491--5520.
\bibitem{book:RMGS} \textsc{R.K. Miller, G. Sell}, Volterra Integral Equations and Topological Dynamics, \emph{Mem. Amer. Math. Soc.}, no. 102, \textrm{Amer. Math. Soc., Providence, 1970.}
\bibitem{paper:RMGS1} \textsc{R.K. Miller, G. Sell}, Existence, uniqueness and continuity of solutions of integral equations, \emph{Ann. Math. Pura Appl.} \textbf{80} (1968), 135--152;  Addendum: ibid. \textbf{87} (1970), 281--286.
\bibitem{paper:LWN} \textsc{L.W. Neustadt}, On the solutions of certain integral-like operator equations. Existence, uniqueness and dependence theorems, \emph{Arch. Rational Mech. Anal.} \textbf{38} (1970), 131--160.
\bibitem{paper:nno2005} \textsc{S. Novo, C. N\'{u}\~{n}ez, R. Obaya}, Almost automorphic and almost periodic dynamics for quasimonotone non-autonomous functional
differential equations, \emph{J. Dynam. Differential Equations} \textbf{17} no. 3 (2005) 589–-619.
\bibitem{book:noob} \textsc{S. Novo, R. Obaya}, Non-autonomous functional differential equations and applications, \textit{Stability and bifurcation theory for non-autonomous differential equations}, Lecture Notes in Math., vol. 2065, Springer, Heidelberg (2013), 185--263,
\bibitem{paper:noos} \textsc{S. Novo, R. Obaya, A.M. Sanz}, Attractor minimal sets for
        cooperative and strongly convex delay differential
        systems. \emph{J. Differential Equations} \textbf{208} (1) (2005), 86--123.
\bibitem{paper:O} \textsc{Z. Opial}: Continuous parameter dependence in linear systems of differential equations. \emph{J. Differential Equations} \textbf{3} (1967), 571--579.
\bibitem{paper:selg} \textsc{J.F.~Selgrade},
        Asymptotic  behavior of solutions to single loop positive feedback systems.
      \emph{J. Differential Equations} {\bf 38} (1980), 80--103.
\bibitem{book:GS} \textsc{G.~Sell}: \emph{Topological Dynamics and Ordinary Differential Equations}, \textrm{Van Nostrand-Reinhold, London, 1971.}
\bibitem{book:S} \textsc{H.L.~Smith}, \textit{An Introduction to Delay Differential Equations with Applications to the Life Sciences}. Springer-Verlag New York, 2011.
\bibitem{book:smith1995} \textsc{H.L.~Smith}, \textit{Monotone Dynamical Systems. An
       introduction to the Theory of Competitive and Cooperative Systems}. Amer. Math. Soc., Providence, 1995.
\bibitem{paper:smiththieme1} \textsc{H.L. Smith, H.R. Thieme}, Strongly order preserving semiflows generated by functional differential equations. \emph{J. Differential Equations} \textbf{93} (2) (1991), 332--363.
\bibitem{paper:Walter} \textsc{W. Walter}, On strongly monotone flows, \emph{Ann. Polon. Math.} \textbf{66} (1997), 269--274.
\bibitem{paper:Z} \textsc{X.Q. Zhao}: Global attractivity in monotone and subhomogeneous almost periodic systems. \emph{J. Differential Equations} \textbf{187} (2) (2003), 494--509.
\end{thebibliography}
\end{document}